\def\ol{\overline}
\def\ie{\textit{i.e.}}
\def\eg{\textit{e.g.} }
\newcommand{\vecloc}[1]{\stackrel{\rightharpoonup}{#1}}
\newcommand{\cevloc}[1]{\stackrel{\leftharpoonup}{#1}}
\begin{document}

\title{A robust high-order Lagrange-projection like scheme with large time steps for the isentropic Euler equations%\thanks{Grants or other notes
%about the article that should go on the front page should be
%placed here. General acknowledgments should be placed at the end of the article.}
}
%\subtitle{Do you have a subtitle?\\ If so, write it here}

\titlerunning{A high-order Lagrange-projection like scheme}        % if too long for running head

\author{Florent Renac}

%\authorrunning{Short form of author list} % if too long for running head

\institute{F. Renac \at
           ONERA The French Aerospace Lab, 92320 Ch\^atillon Cedex, France \\
           \email{florent.renac@onera.fr}           %  \\
%             \emph{Present address:} of F. Author  %  if needed
}

\date{Received: date / Accepted: date}
% The correct dates will be entered by the editor

\maketitle

\begin{abstract}
%Insert your abstract here. Include keywords, PACS and mathematical subject classification numbers as needed.
We present an extension to high-order of a first-order Lagrange-projection like method for the approximation of the Euler equations introduced in Coquel {\it et al.} (Math. Comput., 79 (2010), pp.~1493--1533). The method is based on a decomposition between acoustic and transport operators associated to an implicit-explicit time integration, thus relaxing the constraint of acoustic waves on the time step. We propose here to use a discontinuous Galerkin method for the space approximation. Considering the isentropic Euler equations, we derive conditions to keep positivity of the mean value of density and satisfy an entropy inequality for the numerical solution in each element of the mesh at any approximation order in space. These results allow to design  limiting procedures to restore these properties at nodal values within elements. Numerical experiments support the conclusions of the analysis and highlight stability and robustness of the present method, though it allows the use of large time steps.
\keywords{Lagrange-projection \and discontinuous Galerkin method \and explicit-implicit \and entropy-satisfying \and positivity-preserving \and isentropic Euler equations}
%\PACS{PACS code1 \and PACS code2 \and more}
\subclass{65M12 \and 65M70}
\end{abstract}

%
%%%
%
\section{Introduction}

In this work, we are interested in the design of a robust and accurate numerical method for the description of flows exhibiting multiple space and time scales that can differ by several orders of magnitude. An example is the cooling system of high-pressure gas turbines in turbomachinery. The flow induced by the blade rows is transonic, while the internal cooling channels contain region of very low speed convection. Another examples are multiphase flows where the acoustic time scales strongly differ between liquid and gas, or the Euler equations in the incompressible limit where the speed of acoustic and transport waves differ notably.

In those applications, one may be interested in the transport phenomena associated to slow waves only. Unfortunately, numerical methods are usually designed for the resolution of all waves and suffer from restriction of the fast waves. In standard explicit shock-capturing methods the time step is limited by the fast waves to ensure stability of the numerical scheme. Moreover, their numerical diffusion is proportional to the speed of the fastest waves. Both aspects impose over-resolution in time and space. As a consequence, spurious pressure oscillations that deteriorate the quality of the solution are usually observed.

The present work is mainly based on a Lagrange-projection like scheme introduced in \cite{coquel_etal_10} in the context of a first-order finite volume formulation of the Euler equations. The method uses the Lagrange-projection framework \cite{godlewski-raviart96} for the splitting of acoustic and transport operators, but no mesh movement is applied. The acoustic operator is solved in Lagrange coordinates, while the projection onto the grid is replaced by the transport operator. The Lagrange step is integrated in time with an implicit backward-Euler scheme in order to relax the time step restriction associated to acoustic waves. An explicit forward Euler method is applied for the transport step in order to accurately describe associated unsteady phenomena. This method was then successfully used to design an asymptotic preserving scheme for the discretization of the Euler equations with source terms \cite{chalons_etal_13}.

Besides, the work in \cite{coquel_etal_10} circumvent the difficulties in the treatment of nonlinearities associated to the equation of state by using a relaxation approximation \cite{Jin_Xin_95,coquel_etal_01}. The latter method approximates the nonlinear system with a linear or a quasi-linear enlarged system with stiff relaxation source terms. In the limit of instantaneous relaxation, the system is consistent with the original system. Relaxation approximations have been applied to the isentropic Euler equations \cite{chalons_coulombel08} and to the full system of gas dynamics \cite{chalons_coquel_05,coquel_etal_01} where only the physical pressure is replaced by a relaxation pressure with its own evolution equation.

One attractive feature of the Lagrange-projection like scheme introduced in \cite{coquel_etal_10} is the preservation of convex invariant domains and entropy inequality at the discrete level, while allowing the use of large time steps. The objective of the present work is to extend this method to space discretization of arbitrary order by using a discontinuous Galerkin (DG) method \cite{reed-hill73,lesaint-raviart74} which has become very popular for the solution of nonlinear convection dominated flow problems \cite{cockburn-shu89,cockburn-shu01,adigma-book10,renac_etal_2012c,renac_etal15}. These are particular aspects that make the DG method well suited. First, it is possible to use the numerical fluxes derived in \cite{coquel_etal_10} and therefore the present method may be viewed as a natural extension to high-order of the related numerical method. Then, the effect of the numerical flux on the quality of the approximation is known to decrease as the polynomial degree $p$ in the DG method increases \cite{cockburn-shu01,qui_et-al06,renac15a}. This avoids the use of local numerical parameters tuned at each interface of the mesh in order to lower the numerical diffusion induced by the first-order approximation \cite{coquel_etal_10,chalons_etal_13}. This aspect is essential in our analysis to restore the PDE properties at the discrete level and to derive \textit{a priori} conditions to preserve invariant domains and satisfy an entropy inequality by the present Lagrange-projection DG (LPDG) scheme. These two latter properties are satisfied for the mean value of the numerical solution in the elements of the mesh and present similarities with the positivity preserving scheme in \cite{perthame_shu_96} and the entropy satisfying scheme in \cite{berthon_06}. Besides, they suggest the application of \textit{a posteriori} limiters introduced in \cite{zhang_shu_10a,zhang_shu_10b} that extend the properties to nodal values whithin elements.

The paper is organized as follows. Section~\ref{sec:1Dmodel_pb} presents the model problem with the system of isentropic Euler equations (section~\ref{sec:Euler_eqn}), its relaxation approximation (section~\ref{sec:relax_Euler}) and the splitting between acoustic and transport operators (section~\ref{sec:LP_splitting}). The numerical approach for the high-order space discretization is introduced in section~\ref{sec:DG_discr}, while time discretization is described in section~\ref{sec:time_discr}. The first-order implicit-explicit time integration is described in section~\ref{sec:1st_order_time_discr} and the properties of the numerical scheme are analyzed in section~\ref{sec:ana_1st_order_time_discr}. High-order time discretization and limiting strategies are discussed in sections~\ref{sec:HO_time_discr} and \ref{sec:limiters}, respectively. These results are assessed by several numerical experiments in section \ref{sec:num_xp}. Finally, concluding remarks about this work are given in section \ref{sec:conclusions}.

\section{One-dimensional model problem}\label{sec:1Dmodel_pb}

\subsection{Isentropic Euler equations}\label{sec:Euler_eqn}

The discussion in this paper focuses on the Euler equations for an isentropic gas in one space dimension. Let $\Omega=\mathbb{R}$ be the space domain and consider the following problem
\begin{subeqnarray}\label{eq:Euler_eq}
 \partial_t {\bf u} + \partial_x{\bf f}({\bf u})  &=& 0,\hspace{0.925cm} \mbox{in }\Omega\times(0,\infty),\\
 {\bf u}(\cdot,0) &=& {\bf u}_{0}(\cdot),\quad\mbox{in }\Omega.
\end{subeqnarray}

The vector
\begin{equation}\label{eq:cons_var}
 {\bf u}=\left(
 \begin{array}{c}
  \rho\\ \rho u
 \end{array}\right)
\end{equation}

\noindent represents the conservative variables with $\rho$ the density and $u$ the velocity. The nonlinear convective fluxes in (\ref{eq:Euler_eq}a) are defined by
\begin{equation}
 {\bf f}({\bf u}) = \left(
\begin{array}{c}
  \rho u\\
  \rho u^2+\mathrm{p}
\end{array}
\right).
\end{equation}

Equations (\ref{eq:Euler_eq}) are supplemented with an equation of state for the pressure of the form $\mathrm{p}=\mathrm{p}(\tau)$, with $\tau=1/\rho$ the specific volume. Assuming that $\mathrm{p}'(\tau)<0$ and $\mathrm{p}''(\tau)>0$ for all $\tau>0$, the system (\ref{eq:Euler_eq}a) is strictly hyperbolic over the set of states
\begin{equation*}
 \Omega^a=\{{\bf u}\in\mathbb{R}^2:\;\rho>0,u\in\mathbb{R}\},
\end{equation*}

\noindent with eigenvalues $u\pm c(\tau)$ associated to nonlinear fields. The sound speed is defined by $c^2(\tau)=\tau^2e''(\tau)$ with $e'(\tau)=-\mathrm{p}(\tau)$ the specific internal energy.

Introducing the specific total energy $E=e+u^2/2$, the mapping $\rho E:\Omega^a\ni{\bf u}\mapsto\rho E({\bf u})\in\mathbb{R}$ is a strictly convex function \cite{godlewski-raviart96}. Physically relevant solutions to (\ref{eq:Euler_eq}) must hence satisfy an inequality of the form
\begin{equation}
 \partial_t\rho E + \partial_x(\rho Eu+\mathrm{p}u) \leq 0.
\end{equation}

\subsection{Relaxation approximation of the Euler equations}\label{sec:relax_Euler}

Solutions to the isentropic Euler equations (\ref{eq:Euler_eq}a) may be approximated by solutions of the following Suliciu relaxation system \cite{chalons_coulombel08}
\begin{subeqnarray}\label{eq:relax_sys_Euler}
 \partial_t\rho + \partial_x(\rho u) &=& 0,\\
 \partial_t(\rho u) + \partial_x(\rho u^2+\Pi) &=& 0,\\
% \partial_t(\rho E) + \partial_x\big((\rho E+\Pi)u\big) &=& 0,\\
 \partial_t(\rho \Pi) + \partial_x(\rho \Pi u+a^2u) &=& -\tfrac{\rho(\Pi-\mathrm{p}(\tau))}{\varepsilon},
% \partial_t(\rho a) + \partial_x(\rho au) &=& 0.
\end{subeqnarray}
\noindent where $\varepsilon>0$ represents a characteristic relaxation time. Equation (\ref{eq:relax_sys_Euler}c) modelizes the evolution of the pressure relaxation, $\Pi$, in flows subject to mechanical disequilibrium. The variable $\Pi$ may be viewed as a linearization of the pressure $\mathrm{p}$ around its equilibrium, $\Pi=\mathrm{p}(\tau)$, while $a>0$ is a parameter that approximates the Lagrangian sound speed, $\rho c$, and whose value will be specified later. In the limit of instantaneous relaxation, we get
\begin{equation}\label{eq:instant_relax_edp}
 \lim_{\varepsilon\rightarrow0} \Pi = \mathrm{p}(\tau),
\end{equation}

\noindent and system (\ref{eq:relax_sys_Euler}) converges formally toward (\ref{eq:Euler_eq}a). We refer the reader to \cite{chalons_coulombel08} for an in-depth discussion of the relaxation approximation of the Euler equations for an isentropic fluid.

It may be checked that the homogeneous quasilinear system (\ref{eq:relax_sys_Euler}) is strictly hyperbolic over the set of states
\begin{equation}\label{eq:space_states_relax}
 \Omega^r=\{{\bf w}\in\mathbb{R}^3:\;\rho>0,u\in\mathbb{R},\Pi\in\mathbb{R}\},
\end{equation}

\noindent with eigenvalues $\mu_1=u-a\tau$, $\mu_2=u$, and $\mu_3=u+a\tau$. All the characteristic fields associated with these eigenvalues are linearly degenerate.

We note that the relaxation system (\ref{eq:relax_sys_Euler}) is a dissipative approximation of the Euler equations providing that the following subcharacteristic condition is satisfied
\begin{equation}\label{eq:sub_cond}
 a>\max_\tau\sqrt{-\mathrm{p}'(\tau)},
\end{equation}

\noindent for all $\tau$ under consideration \cite{chalons_coulombel08}.

\subsection{Acoustic-transport operator splitting}\label{sec:LP_splitting}

We decompose equation (\ref{eq:Euler_eq}a) between acoustic and transport operators with a sequential splitting: the acoustic step reads
\begin{equation}\label{eq:acq_step_cons}
 \partial_t {\bf u} + (\partial_xu){\bf u} + \partial_x \left(
\begin{array}{c}
  0\\
  \mathrm{p}
\end{array}
\right) = 0,
\end{equation}

\noindent then the transport step reads
\begin{equation}\label{eq:trp_step}
 \partial_t {\bf u} + \partial_x{\bf f}_t({\bf u}) -(\partial_xu){\bf u} = 0,
\end{equation}

\noindent with ${\bf f}_t({\bf u})=u{\bf u}$. Assuming smooth solutions, the acoustic step (\ref{eq:acq_step_cons}) may be rewritten into the equivalent form
\begin{subeqnarray}\label{eq:acq_step}
 \partial_t \tau - \tau\partial_xu = 0,\\
 \partial_t u + \tau\partial_x\mathrm{p} = 0.
\end{subeqnarray}

In the following, we shall consider a Suliciu relaxation approximation for the Lagrangian gas dynamics applied to (\ref{eq:acq_step}). Considering the relaxation system for the Euler equations (\ref{eq:relax_sys_Euler}), the relaxation system for the acoustic part reads 
\begin{equation}\label{eq:acq_step_relax}
 \partial_t {\bf w} + \tau\partial_x{\bf f}_a({\bf w}) = {\bf s}({\bf w}),
\end{equation}

\noindent with
\begin{equation}\label{eq:Lag_var}
 {\bf w} = \left(
\begin{array}{c}
  \tau \\
  u\\
  \Pi
\end{array}
\right), \quad {\bf f}_a({\bf w}) = \left(
\begin{array}{c}
  -u\\
  \Pi\\
  a^2u
\end{array}
\right), \quad
{\bf s}({\bf w}) = \left(
\begin{array}{c}
  0\\
  0\\
  -\tfrac{\Pi-\mathrm{p}(\tau)}{\varepsilon}
\end{array}
\right).
\end{equation}

With a slight abuse, the whole vector ${\bf w}$ will be referred to as the Lagrange variables in the following. It may be shown that the relaxation system (\ref{eq:acq_step_relax}) is a dissipative approximation of the acoustic step (\ref{eq:acq_step}) under the subcharacteristic condition (\ref{eq:sub_cond}).

Let $m$ be the mass variable defined by $dm=\rho_0(x)dx$. Then, approximating the space derivative operator $\tau\partial_x$ by $\tau(x,0)\partial_x$ with $\tau(\cdot,0)=1/\rho_0(\cdot)$ and imposing $\varepsilon\rightarrow\infty$, the equations in (\ref{eq:acq_step_relax}) may be written in the following conservation form
\begin{subeqnarray}\label{eq:acq_step_relax_homogen}
 \partial_t \tau - \partial_mu &=& 0,\\
 \partial_t u + \partial_m\Pi &=& 0,\\
% \partial_t E + \partial_m(\Pi u) = 0,\\
 \partial_t\Pi + \partial_m(a^2u) &=& 0 %-\tfrac{\Pi-\mathrm{p}(\tau)}{\varepsilon}.
\end{subeqnarray}

The following results hold for the homogeneous relaxation system (\ref{eq:acq_step_relax_homogen}).

\begin{theorem}[Hyperbolicity of the relaxation system]
\label{th:RP_hyp_relax}
The homogeneous relaxation system (\ref{eq:acq_step_relax_homogen}) is strictly hyperbolic over the set of states (\ref{eq:space_states_relax}) with eigenvalues $\mu_1=-a<\mu_2=0<\mu_3=a$. All the characteristic fields associated with these eigenvalues are linearly degenerate. Moreover, the characteristic variables associated with these eigenvalues are $\vecloc{w}=\Pi+au$, $J=\Pi+a^2\tau$, and $\cevloc{w}=\Pi-au$, respectively:
\begin{subeqnarray}\label{eq:Riemann_invar_acou}
 \partial_t\vecloc{w}+a\partial_m\vecloc{w} &=& 0,\\
 \partial_t J &=& 0,\\
 \partial_t\cevloc{w}-a\partial_m\cevloc{w} &=& 0.
\end{subeqnarray}

\end{theorem}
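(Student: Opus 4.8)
The plan is to cast the homogeneous system (\ref{eq:acq_step_relax_homogen}) in quasilinear form $\partial_t{\bf w}+{\bf A}\,\partial_m{\bf w}={\bf 0}$ for ${\bf w}=(\tau,u,\Pi)^\top$, where, since $a>0$ is a fixed parameter, ${\bf A}$ is the \emph{constant} matrix
\begin{equation*}
 {\bf A}=\left(\begin{array}{ccc} 0 & -1 & 0\\ 0 & 0 & 1\\ 0 & a^2 & 0\end{array}\right).
\end{equation*}
Because ${\bf A}$ does not depend on ${\bf w}$, the system is genuinely linear, so the hyperbolicity analysis and the characteristic decomposition are elementary and valid uniformly on the whole set (\ref{eq:space_states_relax}).

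First I would compute the characteristic polynomial $\det({\bf A}-\mu{\bf I})=-\mu(\mu^2-a^2)$, whose roots are $\mu_1=-a$, $\mu_2=0$, $\mu_3=a$; since $a>0$ (by assumption, consistent with (\ref{eq:sub_cond})), these three eigenvalues are real and satisfy the strict ordering $\mu_1<\mu_2<\mu_3$, which gives strict hyperbolicity over $\Omega^r$. Linear degeneracy of every field is then immediate because the $\mu_k$ are constants, so $\nabla_{\bf w}\mu_k\cdot{\bf r}_k\equiv0$ for any associated right eigenvector ${\bf r}_k$. Solving ${\bf l}_k{\bf A}=\mu_k{\bf l}_k$ for the left eigenvectors yields, up to scaling, $(0,-a,1)$, $(a^2,0,1)$ and $(0,a,1)$, whose products with ${\bf w}$ are precisely $\cevloc{w}=\Pi-au$, $J=\Pi+a^2\tau$ and $\vecloc{w}=\Pi+au$; equivalently, one checks that the change of variables ${\bf w}\mapsto(\vecloc{w},J,\cevloc{w})$ is invertible for $a>0$, which identifies these quantities as the characteristic variables.

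Finally, to establish (\ref{eq:Riemann_invar_acou}) I would form the relevant linear combinations of the three scalar equations in (\ref{eq:acq_step_relax_homogen}) directly: using $\partial_t\tau=\partial_m u$, $\partial_t u=-\partial_m\Pi$ and $\partial_t\Pi=-a^2\partial_m u$ one gets $\partial_t\vecloc{w}=\partial_t\Pi+a\partial_t u=-a\partial_m\vecloc{w}$, $\partial_t\cevloc{w}=\partial_t\Pi-a\partial_t u=a\partial_m\cevloc{w}$ and $\partial_t J=\partial_t\Pi+a^2\partial_t\tau=0$. There is no genuine obstacle here beyond the routine bookkeeping of the eigenvector computation; the only point not to overlook is the use of $a>0$ to guarantee the strict separation of the three eigenvalues, without which hyperbolicity would only be weak. \qed
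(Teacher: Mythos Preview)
Your proposal is correct and follows essentially the same approach as the paper: the paper's proof simply states that the eigenvalue/eigenvector computation of $\partial{\bf f}_a/\partial{\bf w}$ is straightforward and left to the reader, and then obtains (\ref{eq:Riemann_invar_acou}a,b,c) by the linear combinations (\ref{eq:acq_step_relax_homogen}c)$+a$(\ref{eq:acq_step_relax_homogen}b), (\ref{eq:acq_step_relax_homogen}c)$+a^2$(\ref{eq:acq_step_relax_homogen}a), and (\ref{eq:acq_step_relax_homogen}c)$-a$(\ref{eq:acq_step_relax_homogen}b), exactly as you do. Your write-up merely spells out the eigenstructure and the linear-degeneracy argument that the paper omits.
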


\begin{proof}
Computing the eigenvalues and eigenvectors of $\partial{\bf f}_a/\partial{\bf w}$ is straightforward and is left to the reader. Likewise, the three operations (\ref{eq:acq_step_relax_homogen}c)$+a$(\ref{eq:acq_step_relax_homogen}b), (\ref{eq:acq_step_relax_homogen}c)$+a^2$(\ref{eq:acq_step_relax_homogen}a), and (\ref{eq:acq_step_relax_homogen}c)$-a$(\ref{eq:acq_step_relax_homogen}b) give (\ref{eq:Riemann_invar_acou}a,b,c), respectively. \qed
\end{proof}

\begin{theorem}[Riemann problem for the relaxation system]
\label{th:RP_acq_relax}
Consider the Riemann problem defined by the homogeneous relaxation system (\ref{eq:acq_step_relax_homogen}) associated with the initial condition
\begin{equation*}
 {\bf w}(m,0)= \left\{
\begin{array}{ll}
  {\bf w}_L, & m<0,\\
  {\bf w}_R, & m>0,
\end{array}
\right.
\end{equation*}

\noindent where ${\bf w}_L$ and ${\bf w}_R$ are in $\Omega^r$. Then, the unique solution to this problem is the self-similar solution ${\boldsymbol{\cal W}}(\cdot;{\bf w}_L,{\bf w}_R)$ defined by
\begin{equation}\label{eq:sol_PR_relax}
 {\boldsymbol{\cal W}}(\tfrac{m}{t};{\bf w}_L,{\bf w}_R)= \left\{
\begin{array}{lr}
  {\bf w}_L, & \frac{m}{t}<-a,\\
  {\bf w}_L^\star, & -a<\frac{m}{t}<0,\\
  {\bf w}_R^\star, & 0<\frac{m}{t}<a,\\
  {\bf w}_R, & \frac{m}{t}>a,
\end{array}
\right.
\end{equation}

\noindent where ${\bf w}_L^\star=(\tau_L^\star,u^\star,\Pi^\star)^\top$, ${\bf w}_R^\star=(\tau_R^\star,u^\star,\Pi^\star)^\top$, and
\begin{subeqnarray}\label{eq:sol_PR_relax2}
 u^\star   &=& \frac{u_L+u_R}{2}+\frac{\Pi_L-\Pi_R}{2a},\\
 \Pi^\star &=& \frac{\Pi_L+\Pi_R}{2}+\frac{a(u_L-u_R)}{2},\\
 \tau_L^\star &=& \tau_L + \frac{u^\star-u_L}{a},\\
 \tau_R^\star &=& \tau_R + \frac{u_R-u^\star}{a}.
\end{subeqnarray}

\end{theorem}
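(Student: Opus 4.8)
The plan is to exploit the fact, established in Theorem~\ref{th:RP_hyp_relax}, that the system~(\ref{eq:acq_step_relax_homogen}) is strictly hyperbolic with three linearly degenerate fields of speeds $-a<0<a$. Consequently the self-similar solution consists of at most three contact discontinuities separating four constant states ${\bf w}_L$, ${\bf w}_L^\star$, ${\bf w}_R^\star$, ${\bf w}_R$, exactly as in the ansatz~(\ref{eq:sol_PR_relax}); it only remains to pin down the two intermediate states. First I would use the characteristic variables of Theorem~\ref{th:RP_hyp_relax}: across each contact the two Riemann invariants that are \emph{not} transported at that wave speed must be continuous. Across the $-a$-wave, $J=\Pi+a^2\tau$ and $\cevloc{w}=\Pi-au$ are constant; across the $0$-wave, $\vecloc{w}=\Pi+au$ and $\cevloc{w}=\Pi-au$ are constant (hence so are $u$ and $\Pi$ separately); across the $a$-wave, $\vecloc{w}=\Pi+au$ and $J$ are constant.

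Then I would translate these invariance relations into equations for the unknowns $u^\star$, $\Pi^\star$, $\tau_L^\star$, $\tau_R^\star$ (the fact that $u$ and $\Pi$ take a common value $u^\star,\Pi^\star$ on both sides of the middle wave is forced by continuity of $\vecloc{w}$ and $\cevloc{w}$ there). From $\cevloc{w}$ constant across the left wave, $\Pi^\star-au^\star=\Pi_L-au_L$; from $\vecloc{w}$ constant across the right wave, $\Pi^\star+au^\star=\Pi_R+au_R$. Solving this $2\times2$ linear system gives~(\ref{eq:sol_PR_relax2}a,b). Next, $J$ constant across the left wave gives $\Pi^\star+a^2\tau_L^\star=\Pi_L+a^2\tau_L$, i.e. $\tau_L^\star=\tau_L+(\Pi_L-\Pi^\star)/a^2$; substituting $\Pi_L-\Pi^\star = a(u^\star-u_L)$ (from the left-wave relation just used) yields~(\ref{eq:sol_PR_relax2}c), and symmetrically $J$ constant across the right wave gives~(\ref{eq:sol_PR_relax2}d). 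One should also note $\rho^\star_{L,R}=1/\tau^\star_{L,R}>0$ is not automatic and in general requires $a$ large enough; but since the statement only asserts existence and uniqueness of the self-similar solution in the stated form (with ${\bf w}^\star_{L,R}\in\mathbb{R}^3$, not necessarily in $\Omega^r$), this is not needed here.

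For uniqueness I would invoke the standard theory of Riemann problems for hyperbolic systems with only linearly degenerate fields: the solution is a superposition of contact discontinuities (no rarefactions, no shocks), and the intermediate states are uniquely determined by the requirement that successive states be joined by contacts of increasing speed, which is precisely the linear system solved above — it has a unique solution since $a>0$. I expect the only mildly delicate point to be bookkeeping: making sure the right pair of invariants is held constant at each wave and that the ordering $-a<0<a$ guarantees the four-state structure with no wave interaction. Everything else is the elementary linear algebra displayed above, which the statement already records in closed form.
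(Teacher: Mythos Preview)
Your overall strategy is sound and essentially equivalent to the paper's: the paper writes the Rankine--Hugoniot relations directly at each of the three contacts and solves the resulting linear system, while you use continuity of the appropriate characteristic variables across each contact. For linearly degenerate fields these two viewpoints coincide, so either route works.

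However, your bookkeeping of \emph{which} characteristic variable is continuous across \emph{which} wave is swapped, and this propagates to give the wrong formulas. From (\ref{eq:Riemann_invar_acou}), $\vecloc{w}=\Pi+au$ is advected at speed $+a$ and $\cevloc{w}=\Pi-au$ at speed $-a$. Hence across the left wave (speed $-a$) it is $\vecloc{w}$ and $J$ that are continuous (only $\cevloc{w}$ may jump there), and across the right wave (speed $+a$) it is $\cevloc{w}$ and $J$ that are continuous. You state the opposite. With the correct pairing one obtains
\[
\Pi^\star+au^\star=\Pi_L+au_L,\qquad \Pi^\star-au^\star=\Pi_R-au_R,
\]
which solve to (\ref{eq:sol_PR_relax2}a,b); your swapped relations would instead give $u^\star=\tfrac{u_L+u_R}{2}+\tfrac{\Pi_R-\Pi_L}{2a}$ and $\Pi^\star=\tfrac{\Pi_L+\Pi_R}{2}+\tfrac{a(u_R-u_L)}{2}$, with the signs of the difference terms reversed, and the error carries through to (\ref{eq:sol_PR_relax2}c,d). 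You can also see the correct assignment directly from the paper's Rankine--Hugoniot relation (\ref{eq:RH_relax}c), which rearranges to $\Pi_L+au_L=\Pi^\star+au^\star$, i.e.\ continuity of $\vecloc{w}$ across the $-a$-wave. This is exactly the ``mildly delicate'' point you flagged; once it is fixed, your argument goes through and matches the paper's.
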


\begin{proof}
The Rankine-Hugoniot relations associated with the continuity and momentum equations (\ref{eq:acq_step_relax_homogen}a,b) through the steady $2$-wave impose $u_L^\star=u_R^\star=u^\star$ and $\Pi_L^\star=\Pi_R^\star=\Pi^\star$. Now, applying the Rankine-Hugoniot relations across the $1$- and $3$-waves, one obtains
\begin{subeqnarray}\label{eq:RH_relax}
 -a(\tau_L^\star-\tau_L) + (u^\star-u_L) &=& 0,\\
 a(\tau_R-\tau_R^\star) + (u_R-u^\star) &=& 0,\\
 -a(u^\star-u_L) - (\Pi^\star-\Pi_L) &=& 0,\\
 a(u_R-u^\star) - (\Pi_R - \Pi^\star) &=& 0.
\end{subeqnarray}

Adding and subtracting (\ref{eq:RH_relax}c) and (\ref{eq:RH_relax}d), one obtains the expressions for $u^\star$ and $\Pi^\star$, respectively. Then, (\ref{eq:RH_relax}a) and (\ref{eq:RH_relax}b) lead to the expressions (\ref{eq:sol_PR_relax2}c) and (\ref{eq:sol_PR_relax2}d). \qed
\end{proof}

We end this section by summing up our strategy to solve the problem (\ref{eq:Euler_eq}). We split  (\ref{eq:Euler_eq}a) into acoustic and transport parts by applying the decomposition introduced in section~\ref{sec:LP_splitting}: (i)  the acoustic step is approximated with the homogeneous relaxation system (\ref{eq:acq_step_relax_homogen}); (ii) we apply the transport step (\ref{eq:trp_step}). We stress that relaxation mechanisms in the right-hand-side of (\ref{eq:acq_step_relax}) are taken into account through the initial condition (\ref{eq:Euler_eq}b) which is transformed  into Lagrange variables with data at equilibrium, \ie, ${\bf w}_0=\big(1/\rho_0,\rho u_0/\rho_0,\mathrm{p}(1/\rho_0)\big)^\top$ pointwise. We now propose to mimic this strategy at the discrete level in the next section, where Theorems~\ref{th:RP_hyp_relax} and \ref{th:RP_acq_relax} will be used to design the numerical flux for the acoustic step.

%
%%%
%
\section{Discontinuous Galerkin formulation}\label{sec:DG_discr}

The DG method consists in defining a discrete weak formulation of problem (\ref{eq:trp_step}), (\ref{eq:acq_step_relax_homogen}), and (\ref{sec:Euler_eqn}b). The domain is discretized with a uniform grid $\Omega_h=\cup_{j\in\mathbb{Z}}\kappa_j$ with cells $\kappa_j = [x_{j-1/2}, x_{j+1/2}]$, $x_{j+1/2}=(j+\tfrac{1}{2})h$ and $h>0$ the space step (see Figure \ref{fig:stencil_1D}).

\subsection{Numerical solution and Lagrange polynomials} We look for approximate solutions in the function space of discontinuous polynomials
\begin{equation}\label{eq:Vhp-space}
 {\cal V}_h^p=\{v_h\in L^2(\Omega_h):\;v_h|_{\kappa_{j}}\in{\cal P}_p(\kappa_{j}),\; \kappa_j\in\Omega_h\},
\end{equation}

\noindent where ${\cal P}_p(\kappa_{j})$ denotes the space of polynomials of degree at most $p$ in the element $\kappa_{j}$. The approximate solutions to systems (\ref{eq:trp_step}) and (\ref{eq:acq_step_relax_homogen}) are sought under the form
\begin{subeqnarray}\label{eq:num_sol}
 {\bf u}_h(x,t)&=&\sum_{l=0}^{p}\phi_j^l(x){\bf U}_j^{l}(t), \quad \forall x\in\kappa_{j},\, \kappa_j\in\Omega_h,\, t\geq0,\\
 {\bf w}_h(x,t)&=&\sum_{l=0}^{p}\phi_j^l(x){\bf W}_j^{l}(t), \quad \forall x\in\kappa_{j},\, \kappa_j\in\Omega_h,\, t\geq0,
\end{subeqnarray}

\noindent where ${\bf U}_j^l=(\rho_j^l,\rho U_j^l)^\top$ constitute the degrees of freedom (DOFs) in the element $\kappa_j$ and are associated to conservative variables, while ${\bf W}_j^l=(\tau_j^l,U_j^l,\Pi_j^l)^\top$ are coefficients associated to Lagrange variables. The subset $(\phi_j^0,\dots,\phi_j^{p})$ constitutes a basis of ${\cal V}_h^p$ restricted onto a given element. In this work we will use the Lagrange interpolation polynomials $\ell_{0\leq k\leq p}$ associated to the Gauss-Lobatto nodes over the segment $[-1,1]$: $s_0=-1<s_1<\dots<s_p=1$:
\begin{equation}\label{eq:lagrange_poly}
 \ell_k(s_l)=\delta_{k,l}, \quad 0\leq k,l \leq p,
\end{equation}

\noindent with $\delta_{k,l}$ the Kronecker symbol. The basis functions in a given element $\kappa_j$ thus write $\phi_j^k(x)=\ell_k(\sigma_j(x))$ where $\sigma_j(x)=2(x-x_j)/h$ and $x_j=(x_{j+1/2}+x_{j-1/2})/2$ denotes the center of the element.

The DOFs thus correspond to the point values of the solution, \eg given $0\leq k\leq p$, $j$ in $\mathbb{Z}$, and $t\geq0$, we have ${\bf u}_h(x_j^k,t)={\bf U}_j^k(t)$ for $x_j^k=x_j+s_kh/2$. The left and right traces of the numerical solution at interfaces $x_{j\pm1/2}$ of a given element hence read (see figure~\ref{fig:stencil_1D}):
\begin{subeqnarray}\label{eq:LR_traces}
 {\bf u}_{j+\frac{1}{2}}^-(t) &:=& {\bf u}_h(x_{j+\frac{1}{2}}^-,t) = {\bf U}_j^p(t), \quad \forall t\geq0,\\
 {\bf u}_{j-\frac{1}{2}}^+(t) &:=& {\bf u}_h(x_{j-\frac{1}{2}}^+,t) = {\bf U}_j^0(t), \quad \forall t\geq0.
\end{subeqnarray}

\begin{figure}
\begin{center}
\epsfig{figure=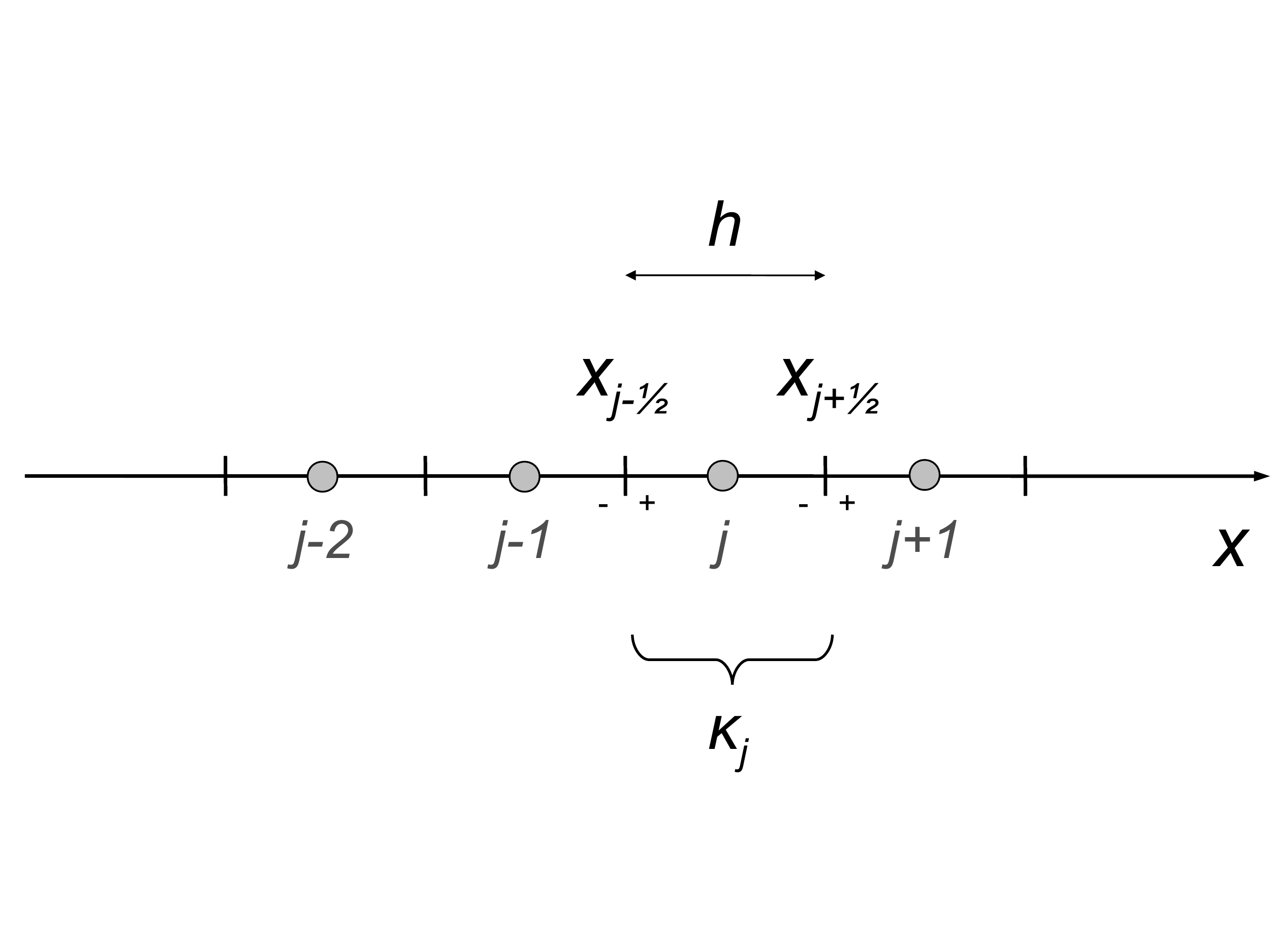,width=6cm,clip=true,trim=0cm 3.5cm 0cm 3.cm}
\caption{Mesh with definition of left and right traces at interfaces $x_{j\pm1/2}$.}
\label{fig:stencil_1D}
\end{center}
\end{figure}

Moreover, assuming equilibrium $\varepsilon\rightarrow0$ for the relaxation system (\ref{eq:acq_step_relax}), the physical and relaxation pressures satisfy
\begin{equation}\label{eq:inst_relaxation}
 \mathrm{p}\big(\tau_j^k(t)\big) = \Pi_j^k(t),\quad 0\leq k \leq p,\quad t>0.
\end{equation}

As a consequence, the conservative and Lagrange variables (\ref{eq:num_sol}) may be related in a weak sense by the relations
\begin{subeqnarray}\label{eq:weak_chge_var}
 {\bf W}_j^k(t) &=& {\bf w}({\bf U}_j^k(t))\quad 0\leq k \leq p,\quad t\geq0,\\
 {\bf U}_j^k(t) &=& {\bf u}({\bf W}_j^k(t))\quad 0\leq k \leq p,\quad t\geq0,
\end{subeqnarray}

\noindent where 
\begin{subeqnarray*}
 {\bf w}:\Omega^a\rightarrow\Omega^r&;&{\bf u}\mapsto{\bf w}({\bf u})=\big(\tfrac{1}{\rho}, \tfrac{\rho u}{\rho}, \mathrm{p}\big(\tfrac{1}{\rho}\big)\big)^\top, \\
 {\bf u}:\Omega^r\rightarrow\Omega^a&;&{\bf w}\mapsto{\bf u}({\bf w})=\big(\tfrac{1}{\tau}, \tfrac{u}{\tau}\big)^\top
\end{subeqnarray*}

\noindent denote, with a slight abuse, the change from conservative to Lagrange variables and its inverse.

\subsection{Space discretization} 

We first consider the space discretization of the relaxation approximation (\ref{eq:acq_step_relax_homogen}) of the acoustic step without source term, \ie, equation (\ref{eq:acq_step_relax_homogen}). Substitute (\ref{eq:num_sol}b) in equation (\ref{eq:acq_step_relax_homogen}) with $\varepsilon\rightarrow\infty$, multiply it with a test function $v_h$ in ${\cal V}_h^p$ with support in a given element $\kappa_j$ and integrate by parts over $\kappa_j$ to obtain
\begin{equation}
 \int_{\kappa_j} v_h\partial_t{\bf w}_hdx - \int_{\kappa_j} {\bf f}_a({\bf w}_h)\partial_x(v_h\tau_h)dx + \Big[v_h\tau_h{\bf h}_a({\bf w}_h^-,{\bf w}_h^+)\Big]_{x_{j-\frac{1}{2}}}^{x_{j+\frac{1}{2}}} = 0,
\end{equation}

\noindent after replacing the physical flux at interface by the numerical flux ${\bf h}_a:\Omega^r\times\Omega^r\rightarrow\mathbb{R}^3$ defined from the solution of the Riemann problem (\ref{eq:sol_PR_relax}):
\begin{equation}\label{eq:num_flux_acou}
 {\bf h}_a({\bf w}_{j+\frac{1}{2}}^-,{\bf w}_{j+\frac{1}{2}}^+) = {\bf f}_a\big({\boldsymbol{\cal W}}(0;{\bf w}_{j+\frac{1}{2}}^-,{\bf w}_{j+\frac{1}{2}}^+)\big) = \left(
\begin{array}{c}
  -u_{j+\frac{1}{2}}^\star\\
  \Pi_{j+\frac{1}{2}}^\star\\
  a^2u_{j+\frac{1}{2}}^\star
\end{array}
\right),
\end{equation}

\noindent with data at equilibrium, \ie, $\Pi_{j+\frac{1}{2}}^\pm=\mathrm{p}(\tau_{j+\frac{1}{2}}^\pm)$. Note that there is no ambiguity in defining the numerical flux at $\xi=0$ since the application $\xi\mapsto {\bf f}_a\big({\boldsymbol{\cal W}}(\xi;{\bf w}_L,{\bf w}_R)\big)$ is continuous at the origin because of the Rankine-Hugoniot relations associated with the steady wave.

Using a second integration by parts, the semi-discrete equation may be equivalently written as
\begin{equation}\label{eq:semi-discrete_eq_acou}
 \int_{\kappa_j} v_h\partial_t{\bf w}_hdx + \int_{\kappa_j} v_h\tau_h\partial_x{\bf f}_a({\bf w}_h)dx + \Big[v_h\tau_h\big({\bf h}_a({\bf w}_h^-,{\bf w}_h^+)-{\bf f}_a({\bf w}_h)\big)\Big]_{x_{j-\frac{1}{2}}}^{x_{j+\frac{1}{2}}} = 0.
\end{equation}

We now introduce the space discretization of the transport problem (\ref{eq:trp_step}) where we consider the approximate solution (\ref{eq:num_sol}a). Using a similar approach as for the acoustic step, with one integration by parts, the semi-discrete form of the DG discretization in space reads
\begin{equation}\label{eq:semi-discrete_eq_trp}
 \int_{\kappa_j} v_h\partial_t{\bf u}_hdx + \int_{\kappa_j} (v_hu_h)\partial_x{\bf u}_hdx + \Big[v_h\big({\bf h}_t({\bf u}_h^-,{\bf u}_h^+)-h_u(u_h^-,u_h^+){\bf u}_h\big)\Big]_{x_{j-\frac{1}{2}}}^{x_{j+\frac{1}{2}}} = 0,
\end{equation}

\noindent where ${\bf h}_t:\Omega^a\times\Omega^a\rightarrow\mathbb{R}^2$ and $h_u:\mathbb{R}\times\mathbb{R}\rightarrow\mathbb{R}$ denote Lipschitz-continuous numerical fluxes consistent with the physical fluxes of the transport step: ${\bf h}_t({\bf u},{\bf u})=u{\bf u}$ and $h_u(u,u)=u$. In this work, we use upwind numerical fluxes. Introducing
\begin{equation}\label{eq:upwind_flux}
\hat{\bf u}_{j+\frac{1}{2}} = \left\{
\begin{array}{ll}
  {\bf U}_j^{p}, & u_{j+\frac{1}{2}}^\star > 0,\\
  {\bf U}_{j+1}^{0}, & u_{j+\frac{1}{2}}^\star \leq 0,
\end{array}
\right.
\end{equation}

\noindent where the quantity $u_{j+1/2}^\star$ is defined from the first component of the numerical flux (\ref{eq:num_flux_acou}) for the acoustic step, we set
\begin{equation}\label{eq:flux_num_trp_ft}
 {\bf h}_t({\bf u}_{j+\frac{1}{2}}^-,{\bf u}_{j+\frac{1}{2}}^+) = u_{j+\frac{1}{2}}^\star \hat{\bf u}_{j+\frac{1}{2}} = (u_{j+\frac{1}{2}}^\star)^+{\bf U}_j^{p}+(u_{j+\frac{1}{2}}^\star)^-{\bf U}_{j+1}^{0},
\end{equation}

\noindent where 
\begin{equation*}
 (u_{j+\frac{1}{2}}^\star)^+=\max(u_{j+\frac{1}{2}}^\star,0), \quad  (u_{j+\frac{1}{2}}^\star)^-=\min(u_{j+\frac{1}{2}}^\star,0)
\end{equation*}

\noindent represent positive and negative parts of $u_{j+\frac{1}{2}}^\star$. Then, we set
\begin{equation}\label{eq:flux_num_trp_u}
 h_u(u_{j+\frac{1}{2}}^-,u_{j+\frac{1}{2}}^+) = u_{j+\frac{1}{2}}^\star.
\end{equation}

The integrals in the semi-discrete equations are approximated by using a numerical quadrature. We use a Gauss-Lobatto quadrature rule with nodes associated with the interpolation points of the numerical solution
\begin{equation*}
 \int_{\kappa_j}f(x)dx \simeq \frac{h}{2} \sum_{l=0}^p \omega_l f(x_j^l),
\end{equation*}

\noindent with $\omega_l>0$, $\sum_{l=0}^p\omega_l=2$, $x_j^l=x_j+s_lh/2$ the weights and nodes of the quadrature rule, and $s_l$ defined in (\ref{eq:lagrange_poly}). For $p+1$ integration points, this quadrature is exact for polynomials of degree $\deg(f)\leq2p-1$. It is worth noting that the integration by parts used in equations (\ref{eq:semi-discrete_eq_acou}) and (\ref{eq:semi-discrete_eq_trp}) are replaced by summation by parts of the form
\begin{equation*}
 \langle f,d_xv_h\rangle_j^p+\langle d_xf,v_h\rangle_j^p=f(x_j^p)v_h(x_j^p)-f(x_j^0)v_h(x_j^0), \quad \forall v_h\in{\cal V}_h^p,
\end{equation*}

\noindent where 
\begin{equation*}
 \langle f,g\rangle_j^p = \frac{h}{2}\sum_{l=0}^p\omega_lf(x_j^l)g(x_j^l)
\end{equation*}

\noindent represents the discrete inner product in the element $\kappa_j$. As noticed in \cite{kopriva_gassner10}, this operation holds true in a weak sense for general functions $f$ by considering the interpolation polynomials of degree $p$ of the function, let say $f_h$, at integration points, $x_j^{0\leq k\leq p}$.

%
%%%
%
\section{Time discretization}\label{sec:time_discr}

In section~\ref{sec:1st_order_time_discr}, we introduce the fully discrete scheme for a one-step first-order implicit-explicit time discretization and analyze its properties in section~\ref{sec:ana_1st_order_time_discr}, while we briefly discuss high-order time discretization in section~\ref{sec:HO_time_discr} and limiting strategy in section~\ref{sec:limiters}.

\subsection{First order time discretization}\label{sec:1st_order_time_discr}

Let $t^{(n)}=n\Delta t$ with $\Delta t>0$ the time step, the time integration of the Euler equations with Lagrange-projection over a time step is done with a two-step sequential splitting: (i) homogeneous acoustic step (\ref{eq:semi-discrete_eq_acou}) over $(t^{(n)},t^{(n+1^-)}]$ and (ii) transport step (\ref{eq:semi-discrete_eq_trp}) over $(t^{(n+1^-)},t^{(n+1)}]$. Again, relaxation mechanisms in (\ref{eq:acq_step_relax}) are taken into account by imposing data at equilibrium at each time step: $\Pi_h^{(n)}=\mathrm{p}(\tau_h^{(n)})$.

We use an implicit backward-Euler scheme for the time discretization of the acoustic step over $(t^{(n)},t^{(n+1^-)}]$. Approximating the mass variable by $\Delta m=\tau_j^{k}(t^{(n)})h$ at point $x_j^k$ over $(t^{(n)},t^{(n+1^-)}]$ and setting $v_h=\phi_j^k$ into (\ref{eq:semi-discrete_eq_acou}), the discrete scheme now reads
\begin{eqnarray}\label{eq:discrete_eq_acou}
 {\bf W}_j^{k,n+1^-} = {\bf W}_j^{k,n} &-& \lambda_k\tau_j^{k,n}\Big[\big\langle\partial_x{\bf f}_a({\bf w}_h^{n+1^-}),\phi_j^k\big\rangle_j^p \nonumber\\
% &+& \delta_{k,p}\big({\bf h}_a({\bf w}_{j+\frac{1}{2}}^-,{\bf w}_{j+\frac{1}{2}}^+)-{\bf f}_a({\bf W}_j^{p,n+1^-})\big) \nonumber\\
% &-& \delta_{k,0}\big({\bf h}_a({\bf w}_{j-\frac{1}{2}}^-,{\bf w}_{j-\frac{1}{2}}^+)-{\bf f}_a({\bf W}_j^{0,n+1^-})\big)\Big),
&+& \delta_{k,p}\big({\bf h}_a({\bf W}_{j}^{p,n+1^-},{\bf W}_{j+1}^{0,n+1^-})-{\bf f}_a({\bf W}_j^{p,n+1^-})\big) \nonumber\\
&-& \delta_{k,0}\big({\bf h}_a({\bf W}_{j-1}^{p,n+1^-},{\bf W}_{j}^{0,n+1^-})-{\bf f}_a({\bf W}_j^{0,n+1^-})\big)\Big],
\end{eqnarray}

\noindent for all $0\leq k\leq p$ and $j$ in $\mathbb{Z}$, where we have used the notations ${\bf W}_j^{k,n}={\bf W}_j^{k}(t^{(n)})$ and $\lambda_k=2\lambda/\omega_k$ with $\lambda=\Delta t/h$. For the sake of clarity, we now use the short notations for the components of the Riemann solver 
\begin{equation*}
 {\bf h}_a({\bf W}_{j}^{p,n+1^-},{\bf W}_{j+1}^{0,n+1^-}) = \left(
\begin{array}{c}
  -u_{j+\frac{1}{2}}^\star\\
  \Pi_{j+\frac{1}{2}}^\star\\
  a^2u_{j+\frac{1}{2}}^\star
\end{array}
\right)
\end{equation*}

\noindent wihtout any possible confusion on the time and trace values.

The subcharacteristic condition (\ref{eq:sub_cond}) is imposed at the discrete level by requiring that
\begin{equation}\label{eq:sub_cond_discr}
a > \max_{j\in\mathbb{Z}}\max_{0\leq k\leq p}\max_{\theta\in[0,1]}\sqrt{-\mathrm{p}'\big(\theta\tau_j^{k,n}+(1-\theta)\tau_j^{k,n+1^-}\big)}, \quad n\in\mathbb{N}.
\end{equation}

Likewise, we use an explicit forward Euler time integration of the semi-discrete transport equations (\ref{eq:semi-discrete_eq_trp}). Introducing the definitions of the numerical fluxes (\ref{eq:flux_num_trp_ft}) and (\ref{eq:flux_num_trp_u}), we obtain
\begin{eqnarray}\label{eq:discrete_eq_trp}
 {\bf U}_j^{k,n+1} &=& {\bf U}_j^{k,n+1^-} - \lambda_k\Big[\big\langle u_h^{n+1^-}\partial_x{\bf u}_h^{n+1^-},\phi_j^k\big\rangle_j^p \nonumber\\
&+&\!  \delta_{k,p}u_{j+\frac{1}{2}}^\star\big(\hat{\bf u}_{j+\frac{1}{2}}^{n+1^-}-{\bf U}_j^{p,n+1^-}) \!-\! \delta_{k,0}u_{j-\frac{1}{2}}^\star\big(\hat{\bf u}_{j-\frac{1}{2}}^{n+1^-}-{\bf U}_j^{0,n+1^-})\Big].
\end{eqnarray}

The discrete problem for the Euler equations now reads: for all time $t^{(n+1)}$ with $n\geq0$ find ${\bf u}_h(\cdot,t^{(n+1)})$ in $({\cal V}_h^p)^2$ such that equations (\ref{eq:discrete_eq_acou}) and (\ref{eq:discrete_eq_trp}) are satisfied with 
\begin{subeqnarray}\label{eq:projections_n_n+1^-}
 {\bf W}_j^{k,n}&=&{\bf w}\big({\bf U}_j^{k,n}\big), \quad 0\leq k\leq p, j\in\mathbb{Z},\\
 {\bf U}_j^{k,n+1^-}&=&{\bf u}\big({\bf W}_j^{k,n+1^-}\big), \quad 0\leq k\leq p, j\in\mathbb{Z},
\end{subeqnarray}

\noindent given by transformations (\ref{eq:weak_chge_var}).

The time sequence ${\bf u}_h(\cdot,t^{(n)})$ is associated with the initial condition
\begin{equation}\label{eq:discr_CI}
 \int_{\Omega_h} v_h {\bf w}_h(x,0) dx = \int_{\Omega_h} v_h {\bf w}\big({\bf u}_0(x)\big) dx, \quad \forall v_h\in{\cal V}_h^p,
\end{equation}

\noindent which reduces to ${\bf W}_j^k(0)={\bf w}\big({\bf u}_0(x_j^k)\big)$ for all $0\leq k\leq p$ and $j$ in $\mathbb{Z}$.

The following theorem allows us to give the final form (\ref{eq:discrete_eq_Euler}) of the LPDG scheme.

\begin{theorem}[Consistency and conservation]
 The discrete schemes (\ref{eq:discrete_eq_acou}) and (\ref{eq:discrete_eq_trp}) with projections (\ref{eq:projections_n_n+1^-}) constitute a conservative approximation consistent in time and space with the Euler equations (\ref{eq:Euler_eq}a) of the form
\begin{equation}\label{eq:discrete_eq_Euler}
 {\bf U}_j^{k,n+1} = {\bf U}_j^{k,n} - \lambda_k\Big[-\big\langle{\bf f}({\bf u}_h^{n+1^-}),d_x\phi_j^k\big\rangle_j^p + \delta_{k,p}{\bf h}_{j+\frac{1}{2}}^{n+1^-}-\delta_{k,0}{\bf h}_{j-\frac{1}{2}}^{n+1^-}\Big],
\end{equation}

\noindent with 
%${\bf h}({\bf a},{\bf b})={\bf h}_t({\bf a},{\bf b})+{\bf h}_a\big({\bf w}({\bf a}),{\bf w}({\bf b})\big)$. 
\begin{equation}
 {\bf h}_{j+\frac{1}{2}}^{n+1^-} = \left(
\begin{array}{ll}
  {u}_{j+\frac{1}{2}}^\star\hat{\rho}_{j+\frac{1}{2}}^{n+1^-}\\
  {u}_{j+\frac{1}{2}}^\star\hat{\rho u}_{j+\frac{1}{2}}^{n+1^-} + \Pi_{j+\frac{1}{2}}^\star
\end{array}
\right),
\end{equation}

\noindent evaluated from DOFs at time $t^{(n+1^-)}$.
\end{theorem}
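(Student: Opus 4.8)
\emph{Proof plan.} The idea is to eliminate the intermediate quantities at $t^{(n+1^-)}$ and to exhibit the composition of the acoustic and transport substeps as a single update of the conservative DOFs. I would begin by expanding the implicit acoustic step (\ref{eq:discrete_eq_acou}) row by row. Write $G_j^k$ and $H_j^k$ for the brackets multiplying $-\lambda_k\tau_j^{k,n}$ in the $\tau$- and in the $u$-equations; each is the sum of a volume part $\langle\partial_x{\bf f}_a^{(i)}({\bf w}_h^{n+1^-}),\phi_j^k\rangle_j^p$, i.e.\ $\langle-\partial_xu_h^{n+1^-},\phi_j^k\rangle_j^p$ resp.\ $\langle\partial_x\Pi_h^{n+1^-},\phi_j^k\rangle_j^p$, and a boundary part $\delta_{k,p}(\cdot)-\delta_{k,0}(\cdot)$ assembled from $u_{j\pm\frac12}^\star$ resp.\ $\Pi_{j\pm\frac12}^\star$. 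Then $\tau_j^{k,n+1^-}=\tau_j^{k,n}(1-\lambda_kG_j^k)$ and $U_j^{k,n+1^-}=U_j^{k,n}-\lambda_k\tau_j^{k,n}H_j^k$. Composing with the change of variables (\ref{eq:projections_n_n+1^-}b), $\rho=1/\tau$ and $\rho u=u/\tau$, and using $\rho_j^{k,n}\tau_j^{k,n}=1$, one obtains the two conservative increments
$$\rho_j^{k,n+1^-}-\rho_j^{k,n}=\lambda_k\rho_j^{k,n+1^-}G_j^k,\qquad (\rho u)_j^{k,n+1^-}-(\rho u)_j^{k,n}=\lambda_k(\rho u)_j^{k,n+1^-}G_j^k-\lambda_kH_j^k,$$
so the only trace of the backward-Euler nonlinearity is the factor $\rho_j^{k,n+1^-}$ resp.\ $(\rho u)_j^{k,n+1^-}$ evaluated at the new time.

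I would then add the explicit transport step (\ref{eq:discrete_eq_trp}), written componentwise. The essential simplifications are interface-local: the piece $-\lambda_k\rho_j^{p,n+1^-}u_{j+\frac12}^\star$ coming from the boundary part of $G_j^k$ at $k=p$ cancels exactly against the matching piece of the transport correction $-\lambda_ku_{j+\frac12}^\star(\hat\rho_{j+\frac12}^{n+1^-}-\rho_j^{p,n+1^-})$, the same happens for the momentum, and what survives at $x_{j+\frac12}$ is $\lambda_k\big({\bf f}({\bf u}_j^{p,n+1^-})-{\bf h}_{j+\frac12}^{n+1^-}\big)$ with ${\bf h}_{j+\frac12}=(u^\star\hat\rho,\ u^\star\hat{\rho u}+\Pi^\star)^\top$ — the $\Pi^\star$ being inherited from the momentum component of the acoustic flux — and analogously at $x_{j-\frac12}$. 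For the volume contributions I would use the product rule for polynomial derivatives to recombine the acoustic part (which supplies, at the nodes, $\rho_h\partial_xu_h$, $(\rho u)_h\partial_xu_h$ and $\partial_x\Pi_h$) with the transport part (supplying $u_h\partial_x\rho_h$ and $u_h\partial_x(\rho u)_h$) into $\langle\partial_x{\bf f}({\bf u}_h^{n+1^-}),\phi_j^k\rangle_j^p$, and then invoke the summation-by-parts identity recalled after the quadrature rule (in the weak form valid for a general flux replaced by its degree-$p$ interpolant at the quadrature nodes) to rewrite it as $-\langle{\bf f}({\bf u}_h^{n+1^-}),d_x\phi_j^k\rangle_j^p$ up to the interface fluxes ${\bf f}({\bf u}_{j\pm\frac12}^{\mp})$, which are exactly the residuals just found. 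Collecting everything yields (\ref{eq:discrete_eq_Euler}).

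Conservation and consistency follow quickly. Since ${\bf h}_{j+\frac12}^{n+1^-}$ depends only on the two interface traces ${\bf u}_{j+\frac12}^{\pm,n+1^-}$ (through $u_{j+\frac12}^\star$, $\Pi_{j+\frac12}^\star$ and $\hat{\bf u}_{j+\frac12}$), it is single-valued, so summing (\ref{eq:discrete_eq_Euler}) against the quadrature weights over $k$ and over the cells telescopes; this is conservation. For spatial consistency, ${\bf h}_a$ is the exact solver of the Riemann problem of Theorem~\ref{th:RP_acq_relax}, hence ${\bf h}_a({\bf w},{\bf w})={\bf f}_a({\bf w})$, while ${\bf h}_t({\bf u},{\bf u})=u{\bf u}$, $h_u(u,u)=u$, and the summation-by-parts identity is exact on ${\cal V}_h^p$; at equilibrium $\Pi=\mathrm{p}(\tau)$, i.e.\ ${\bf w}={\bf w}({\bf u})$, one has ${\bf h}_{j+\frac12}\to{\bf f}({\bf u})$ and the volume term reproduces the DG discretization of $\partial_x{\bf f}({\bf u})$. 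Temporal consistency is the standard first-order Lie-splitting estimate.

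The step I expect to be the main obstacle is the middle one: one must carry the weight $\tau_j^{k,n}$ of the implicit step through the change of variables, check that it is absorbed by the structure of the transport correction for \emph{both} conservative components simultaneously, and verify that the residual interface terms assemble into ${\bf h}_{j\pm\frac12}$. The genuinely delicate point lies in the volume term: the composition of the two substeps produces the advective/product form $\rho_h\partial_xu_h+u_h\partial_x\rho_h$ (and its momentum analogue) rather than $\partial_x$ of the nodal interpolant of the flux, so the identification with $-\langle{\bf f}({\bf u}_h),d_x\phi_j^k\rangle_j^p$ must be understood with the integration by parts performed on the exact polynomial flux; the two forms coincide after summation against the quadrature weights over a cell, which is precisely what the conservation statement uses.
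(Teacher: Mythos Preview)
Your plan is essentially the paper's own argument. The paper introduces the factor $L_j^{k,n+1^-}$ (your $1-\lambda_kG_j^k$) from the $\tau$-row of the acoustic step, rewrites the acoustic update in conservative form as $L_j^{k,n+1^-}{\bf U}_j^{k,n+1^-}={\bf U}_j^{k,n}-\lambda_k[\text{pressure terms}]$, rearranges the transport update so that the same factor $L_j^{k,n+1^-}{\bf U}_j^{k,n+1^-}$ appears together with the divergence of $u_h{\bf u}_h$ and the boundary fluxes ${\bf h}_t$, sums the two to obtain the strong form with flux ${\bf f}_E({\bf u},{\bf w})=u{\bf u}+(0,\Pi)^\top$, and then invokes the Kopriva--Gassner strong/weak-form equivalence for collocated Gauss--Lobatto quadrature to pass to (\ref{eq:discrete_eq_Euler}). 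Your caution about the volume term is well placed and is precisely the point the paper addresses by that last citation: the summation-by-parts identity is used in its ``weak'' form, i.e.\ with the flux replaced by its degree-$p$ nodal interpolant.
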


\begin{proof}
We first observe that the first component of the discrete equation (\ref{eq:discrete_eq_acou}) reads $\tau_j^{k,n+1^-} = L_j^{k,n+1^-}\tau_j^{k,n}$ with
\begin{subeqnarray}\label{eq:def_Ljkn}
 L_j^{k,n+1^-} &=& 1+\lambda_{k}\Big(\big\langle\partial_xu_h^{n+1^-},\phi_j^k\big\rangle_j^p \nonumber\\
 && +\delta_{k,p}(u_{j+\frac{1}{2}}^\star-U_j^{p,n+1^-})-\delta_{k,0}(u_{j-\frac{1}{2}}^\star-U_j^{0,n+1^-})\Big) \\
 &=& 1+\lambda_{k}\Big(-\big\langle u_h^{n+1^-},d_x\phi_j^k\big\rangle_j^p+\delta_{k,p}u_{j+\frac{1}{2}}^\star-\delta_{k,0}u_{j-\frac{1}{2}}^\star\Big).
\end{subeqnarray}

Using the change of variables (\ref{eq:weak_chge_var}b) and assuming that $\tau_j^{k,n}>0$ and $\tau_j^{k,n+1^-}>0$ (see equation (\ref{eq:pos_ac_step}) in Lemma~\ref{th:pos_ineq_acq}), the two first components of (\ref{eq:discrete_eq_acou}) may thus be rewritten under the form
\begin{subeqnarray}\label{eq:discrete_ac_step_in_cons_form}
 L_j^{k,n+1^-}\rho_j^{k,n+1^-} &=& \rho_j^{k,n},\\
 L_j^{k,n+1^-}\rho U_j^{k,n+1^-} &=& \rho U_j^{k,n} - \lambda_{k}\Big(\big\langle\partial_x\Pi_h^{n+1^-},\phi_j^k\big\rangle_j^p \nonumber\\
 &+& \delta_{k,p}\big(\Pi_{j+\frac{1}{2}}^\star\!-\!\Pi_j^{p,n+1^-}\big)-\delta_{k,0}\big(\Pi_{j-\frac{1}{2}}^\star\!-\!\Pi_j^{0,n+1^-}\big) \Big).
\end{subeqnarray}

Using the expressions of the  numerical fluxes (\ref{eq:flux_num_trp_ft}) and (\ref{eq:flux_num_trp_u}) with ${\bf f}_t({\bf U}_j^{k,n})=U_j^{k,n}{\bf U}_j^{k,n}$, the discrete transport step (\ref{eq:discrete_eq_trp}) may be rewritten as
\begin{eqnarray*}
 {\bf U}_j^{k,n+1} &=& {\bf U}_j^{k,n+1^-} - \lambda_k\Big[\big\langle\partial_x(u_h^{n+1^-}{\bf u}_h^{n+1^-}),\phi_j^k\big\rangle_j^p-\big\langle\partial_xu_h^{n+1^-}{\bf u}_h^{n+1^-},\phi_j^k\big\rangle_j^p \\
&&+ \delta_{k,p}\big({\bf h}_t({\bf u}_{j+\frac{1}{2}}^-,{\bf u}_{j+\frac{1}{2}}^+)-{\bf f}_t({\bf U}_j^{p,n+1^-})+{\bf f}_t({\bf U}_j^{p,n+1^-})-u_{j+\frac{1}{2}}^\star{\bf U}_j^{p,n+1^-}\big) \\
&&- \delta_{k,0}\big({\bf h}_t({\bf u}_{j-\frac{1}{2}}^-,{\bf u}_{j-\frac{1}{2}}^+)-{\bf f}_t({\bf U}_j^{0,n+1^-})+{\bf f}_t({\bf U}_j^{0,n+1^-})-u_{j-\frac{1}{2}}^\star{\bf U}_j^{0,n+1^-}\big)\Big]\\
&=& L_j^{k,n+1^-}{\bf U}_j^{k,n+1^-} - \lambda_k\Big[\big\langle\partial_x(u_h^{n+1^-}{\bf u}_h^{n+1^-}),\phi_j^k\big\rangle_j^p \\
&&+ \delta_{k,p}\big({\bf h}_t({\bf u}_{j+\frac{1}{2}}^-,{\bf u}_{j+\frac{1}{2}}^+)-{\bf f}_t({\bf U}_j^{p,n+1^-})\big) \\
&&- \delta_{k,0}\big({\bf h}_t({\bf u}_{j-\frac{1}{2}}^-,{\bf u}_{j-\frac{1}{2}}^+)-{\bf f}_t({\bf U}_j^{0,n+1^-})\big)\Big].
\end{eqnarray*}

Summing the two above schemes, we get
\begin{eqnarray*}
 {\bf U}_j^{k,n+1} &=& {\bf U}_j^{k,n} - \lambda_k\Big[\big\langle\partial_x{\bf f}_E({\bf u}_h^{n+1^-},{\bf w}_h^{n+1^-}),\phi_j^k\big\rangle_j^p \\
&& + \delta_{k,p}\big({\bf h}_{j+\frac{1}{2}}^{n+1^-}-{\bf f}_E({\bf U}_j^{p,n+1^-},{\bf W}_j^{p,n+1^-})\big) \\
&& - \delta_{k,0}\big({\bf h}_{j-\frac{1}{2}}^{n+1^-}-{\bf f}_E({\bf U}_j^{0,n+1^-},{\bf W}_j^{0,n+1^-})\big)\Big],
\end{eqnarray*}

\noindent with ${\bf f}_E({\bf u},{\bf w})=u{\bf u}+(0,\Pi)^\top$ consistent with the Euler fluxes ${\bf f}({\bf u})$ when $\Pi=\mathrm{p}(\tau)$. This is done by imposing instantaneous relaxation (\ref{eq:inst_relaxation}), $\mathrm{p}\big(\tau_j^{k,n+1}\big)=\Pi_j^{k,n+1}$. Now, we use the equivalence between the discrete DG formulations with either one or two integration by parts when using a collocated Gaus-Lobatto quadrature \cite{kopriva_gassner10}. Applying integration by parts, the above scheme reduces to (\ref{eq:discrete_eq_Euler}) and constitutes a conservative implicit-explicit discretization of the Euler equations (\ref{eq:Euler_eq}) consistent in space and time. \qed
\end{proof}

Then, the numerical scheme (\ref{eq:discrete_eq_acou}) provides the following discrete versions of the conservation equations (\ref{eq:Riemann_invar_acou}) for the characteristic variables. Note that from the definition of the numerical flux (\ref{eq:num_flux_acou}) with (\ref{eq:sol_PR_relax2}) we have
\begin{subeqnarray}\label{eq:rel_pi_u_star_traces}
 \Pi_{j+\frac{1}{2}}^\star-\Pi_j^{p,n+1^-} &=&-a(u_{j+\frac{1}{2}}^\star-U_j^{p,n+1^-}), \\
 \Pi_{j-\frac{1}{2}}^\star-\Pi_j^{0,n+1^-} &=& a(u_{j-\frac{1}{2}}^\star-U_j^{0,n+1^-}).
\end{subeqnarray}

Then, setting 
\begin{equation}\label{eq:def_vec_cev_j}
 \vecloc{W}_j^{k,n}=\Pi_j^{k,n}+aU_j^{k,n}, \quad J_j^{k,n}=\Pi_j^{k,n}+a^2\tau_j^{k,n}, \quad \cevloc{W}_j^{k,n}=\Pi_j^{k,n}-aU_j^{k,n},
\end{equation}

\noindent and using relations (\ref{eq:rel_pi_u_star_traces}), the three operations (\ref{eq:discrete_eq_acou}c)$+a$(\ref{eq:discrete_eq_acou}b), (\ref{eq:discrete_eq_acou}c)$+a^2$(\ref{eq:discrete_eq_acou}a) and (\ref{eq:discrete_eq_acou}c)$-a$(\ref{eq:discrete_eq_acou}b) give respectively:
\begin{subeqnarray}\label{eq:riemann_inv_eqn_acou}
\vecloc{W}_j^{k,n+1^-} \hspace{-0.15cm} &=& \hspace{-0.05cm} \vecloc{W}_j^{k,n} \!\! - a\lambda_k\tau_j^{k,n}\! \Big[\!\big\langle\partial_x\vecloc{w}_h^{n+1^-}\!\! ,\! \phi_j^k\big\rangle_j^p \! -\! \delta_{k,0}\! \big(\vecloc{W}_{j-1}^{p,n+1^-} \hspace{-0.2cm}- \hspace{-0.1cm}\vecloc{W}_{j}^{0,n+1^-}\big)\! \Big]\! , \\
J_j^{k,n+1^-} \hspace{-0.15cm} &=& \hspace{-0.05cm} J_j^{k,n}, \\
\cevloc{W}_j^{k,n+1^-} \hspace{-0.15cm} &=& \hspace{-0.05cm} \cevloc{W}_j^{k,n} \!\! + a\lambda_k\tau_j^{k,n}\! \Big[\!\big\langle\partial_x\cevloc{w}_h^{n+1^-}\!\! ,\! \phi_j^k\big\rangle_j^p \! +\! \delta_{k,p}\! \big(\cevloc{W}_{j+1}^{0,n+1^-} \hspace{-0.2cm}- \hspace{-0.1cm}\cevloc{W}_{j}^{p,n+1^-}\big)\! \Big]\! .
\end{subeqnarray}

Equation (\ref{eq:discrete_eq_Euler}) constitutes the LPDG scheme where the state ${\bf u}_h^{n+1^-}$ is evaluated from the linear implicit system (\ref{eq:riemann_inv_eqn_acou}a,c) that may be easily solved for the discrete characteristic variables (\ref{eq:def_vec_cev_j}), $\tau_j^{k,n+1^-}$ being then given explicitly by (\ref{eq:riemann_inv_eqn_acou}b). Then, the Lagrange variables are obtained by inverting relations (\ref{eq:def_vec_cev_j}) at time $t^{(n+1^-)}$. This result is essential for the performances of the present method.
% \begin{equation*}
%  \Pi_{j}^{k,n+1^-} = \frac{\vecloc{W}_j^{k,n+1^-}+\cevloc{W}_j^{k,n+1^-}}{2}, \; U_{j}^{k,n+1^-} = \frac{\vecloc{W}_j^{k,n+1^-}-\cevloc{W}_j^{k,n+1^-}}{2a}, \; \tau_{j}^{k,n+1^-} = \tau_{j}^{k,n} - \frac{\Pi_{j}^{k,n+1^-}-\Pi_{j}^{k,n}}{a^2}.
% \end{equation*}

%
\subsection{Properties of the discrete scheme}\label{sec:ana_1st_order_time_discr}

In this section, we discuss the properties of the LPDG scheme with first-order time integration and arbitrary order for the space discretization. The main results are given in Theorem~\ref{th:pos_ineq_LPDG} and prove positivity and entropy inequality for the mean value of the numerical solution
\begin{equation}
 \ol{\bf u}_j^{n} := \frac{1}{h}\int_{\kappa_j}{\bf u}_h(x,t^{(n)})dx=\sum_{l=0}^p\frac{\omega_k}{2}{\bf U}_j^{l,n}.
\end{equation}

The entropy inequality applies to the total energy that we introduce at the discrete level via its interpolant
\begin{equation}\label{eq:discr_tot_nrj}
 \rho E_h(x,t) = \sum_{l=0}^p \phi_j^l(x)\rho E_j^l(t), \quad \forall x\in\kappa_j, t\geq0,
\end{equation}

\noindent with $\rho E_j^l(t)=\rho E\big({\bf U}_j^l(t)\big)$.
\begin{lemma}
\label{th:pos_ineq_acq}
Assume that $\rho_{j\in\mathbb{Z}}^{0\leq k\leq p,n}>0$, then under the CFL condition
\begin{equation}\label{eq:CFL_LPDG}
 \lambda\max_{j\in\mathbb{Z}}\max_{0\leq k\leq p}\frac{1}{\omega_k}\Big(\big\langle u_h^{n+1^-},d_x\phi_j^k\big\rangle_j^p-\delta_{k,p}(u_{j+\frac{1}{2}}^\star)^-+\delta_{k,0}(u_{j-\frac{1}{2}}^\star)^+\Big) < \frac{1}{2},
\end{equation}
\noindent we have 
\begin{equation}\label{eq:pos_ac_step}
\rho_j^{k,n+1^-}>0
\end{equation}

\noindent and 
\begin{eqnarray}\label{eq:convex_comb_trp}
 \ol{\bf u}_j^{n+1} &=& \sum_{k=0}^p\bigg(\frac{\omega_k}{2}-\lambda\Big(\big\langle u_h^{n+1^-},d_x\phi_j^k\big\rangle_j^p-\delta_{k,p}(u_{j+\frac{1}{2}}^\star)^-+\delta_{k,0}(u_{j-\frac{1}{2}}^\star)^+\Big)\bigg){\bf U}_j^{k,n+1^-} \nonumber \\ 
 && -\lambda (u_{j+\frac{1}{2}}^\star)^-{\bf U}_{j+1}^{0,n+1^-} + \lambda (u_{j-\frac{1}{2}}^\star)^+{\bf U}_{j-1}^{p,n+1^-}
\end{eqnarray}
\noindent is a convex combination of DOFs at time $t^{(n+1^-)}$.
\end{lemma}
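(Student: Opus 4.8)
The plan is to establish the two claims in sequence, both hinging on the structure of the first (density) component of the discrete acoustic scheme together with the explicit transport update.

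First I would examine the density update produced by the acoustic step. From the proof of the consistency theorem we already have $\tau_j^{k,n+1^-}=L_j^{k,n+1^-}\tau_j^{k,n}$ with $L_j^{k,n+1^-}$ given in the second line of (\ref{eq:def_Ljkn}), namely
\begin{equation*}
 L_j^{k,n+1^-}=1+\lambda_k\Big(-\big\langle u_h^{n+1^-},d_x\phi_j^k\big\rangle_j^p+\delta_{k,p}u_{j+\frac{1}{2}}^\star-\delta_{k,0}u_{j-\frac{1}{2}}^\star\Big).
\end{equation*}
Since $\rho_j^{k,n}>0$ by hypothesis and $\rho_j^{k,n+1^-}=\tau_j^{k,n}/(L_j^{k,n+1^-}\tau_j^{k,n})\cdot(\dots)$—more precisely $\rho_j^{k,n+1^-}=\rho_j^{k,n}/L_j^{k,n+1^-}$ from (\ref{eq:discrete_ac_step_in_cons_form}a)—positivity (\ref{eq:pos_ac_step}) will follow once I show $L_j^{k,n+1^-}>0$. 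I would split $u_{j+\frac{1}{2}}^\star=(u_{j+\frac{1}{2}}^\star)^++(u_{j+\frac{1}{2}}^\star)^-$ and likewise at $j-\tfrac12$, keeping the nonnegative parts $\delta_{k,p}(u_{j+\frac{1}{2}}^\star)^+$ and $-\delta_{k,0}(u_{j-\frac{1}{2}}^\star)^-$ on the "good" side and moving the remaining terms into a bound; recalling $\lambda_k=2\lambda/\omega_k$, one gets $L_j^{k,n+1^-}\ge 1-2\lambda\omega_k^{-1}\big(\langle u_h^{n+1^-},d_x\phi_j^k\rangle_j^p-\delta_{k,p}(u_{j+\frac{1}{2}}^\star)^-+\delta_{k,0}(u_{j-\frac{1}{2}}^\star)^+\big)>1-2\cdot\tfrac12=0$ under the CFL condition (\ref{eq:CFL_LPDG}). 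I should double-check the sign bookkeeping on the summation-by-parts term—that is the one place an error can creep in—but it is the same identity already used to pass between the two forms of (\ref{eq:def_Ljkn}).

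Next I would turn to the convex-combination identity (\ref{eq:convex_comb_trp}). Starting from the explicit transport update (\ref{eq:discrete_eq_trp}) with the upwind flux (\ref{eq:flux_num_trp_ft})--(\ref{eq:flux_num_trp_u}), I average over the element: $\ol{\bf u}_j^{n+1}=\sum_k\tfrac{\omega_k}{2}{\bf U}_j^{k,n+1}$. The volume term $\sum_k\tfrac{\omega_k}{2}\lambda_k\langle u_h^{n+1^-}\partial_x{\bf u}_h^{n+1^-},\phi_j^k\rangle_j^p$ collapses: since $\tfrac{\omega_k}{2}\lambda_k=\lambda$ and $\sum_k\phi_j^k\equiv 1$, it equals $\lambda\langle u_h^{n+1^-},\partial_x{\bf u}_h^{n+1^-}\rangle_j^p$, which by the product rule / summation-by-parts rewrites in terms of boundary values and the term $-\langle u_h^{n+1^-},d_x\phi_j^k\rangle_j^p$ against the partition of unity; combined with the interface contributions $\delta_{k,p}u_{j+\frac12}^\star(\hat{\bf u}_{j+\frac12}-{\bf U}_j^p)$ and $\delta_{k,0}$ term, and after substituting $\hat{\bf u}_{j\pm\frac12}$ from (\ref{eq:upwind_flux}) and splitting $u^\star=(u^\star)^++(u^\star)^-$, everything reorganizes into exactly the right-hand side of (\ref{eq:convex_comb_trp}). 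This is essentially the same manipulation performed in the consistency proof restricted to the transport half-step and then cell-averaged; I would present it compactly rather than expand every term.

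Finally, to conclude that (\ref{eq:convex_comb_trp}) is a genuine convex combination I need the coefficients to be nonnegative and to sum to one. Summation to one is immediate: $\sum_k\tfrac{\omega_k}{2}=1$, the telescoped $\langle u_h^{n+1^-},d_x\phi_j^k\rangle_j^p$ contributions sum to the interface velocity differences which cancel against the $\delta_{k,p},\delta_{k,0}$ terms up to the $(u^\star)^\pm$ pieces, and those $-\lambda(u_{j+\frac12}^\star)^-$ and $+\lambda(u_{j-\frac12}^\star)^+$ weights are precisely compensated. For nonnegativity, the weights $-\lambda(u_{j+\frac12}^\star)^-$ and $\lambda(u_{j-\frac12}^\star)^+$ are $\ge 0$ by definition of the positive/negative parts, while the diagonal weights $\tfrac{\omega_k}{2}-\lambda\big(\langle u_h^{n+1^-},d_x\phi_j^k\rangle_j^p-\delta_{k,p}(u_{j+\frac12}^\star)^-+\delta_{k,0}(u_{j-\frac12}^\star)^+\big)$ are $\ge \tfrac{\omega_k}{2}\big(1-2\lambda\omega_k^{-1}(\cdots)\big)>0$ again by (\ref{eq:CFL_LPDG})—the very same quantity that controlled $L_j^{k,n+1^-}$. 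The main obstacle is not any single hard estimate but the careful algebraic reorganization of the transport update into the stated form while tracking which boundary terms cancel; once that bookkeeping is done, both positivity of $L_j^{k,n+1^-}$ and nonnegativity of the convex weights are controlled by one and the same CFL number, so the two parts of the lemma are really one estimate applied twice.
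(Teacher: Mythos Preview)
Your proposal is correct and follows essentially the same route as the paper: positivity of $\rho_j^{k,n+1^-}$ via $L_j^{k,n+1^-}>0$ under the CFL condition (your positive/negative splitting of $u^\star$ just makes explicit what the paper leaves implicit), and the convex-combination identity via cell-averaging the transport update and reorganizing the volume term so that each ${\bf U}_j^{k,n+1^-}$ acquires the coefficient $\langle u_h^{n+1^-},d_x\phi_j^k\rangle_j^p$. The paper phrases that reorganization as an index swap in the double sum rather than ``product rule/summation-by-parts against the partition of unity,'' and it obtains unit sum directly from $\sum_k d_x\phi_j^k=0$, but these are the same computation.
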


\begin{proof}
 From (\ref{eq:discrete_ac_step_in_cons_form}a), we have $L_j^{k,n+1^-}\rho_j^{k,n+1^-} = \rho_j^{k,n}$ with $L_j^{k,n+1^-}>0$ from (\ref{eq:def_Ljkn}b) and condition (\ref{eq:CFL_LPDG}), hence $\rho_j^{k,n+1^-}>0$. Then using (\ref{eq:discrete_eq_trp}) to evaluate the mean value at time $t^{(n+1)}$ together with definition of the upwind flux (\ref{eq:upwind_flux}), we get
\begin{eqnarray*}
 \ol{\bf u}_j^{n+1} &=& \sum_{k=0}^p\frac{\omega_k}{2}{\bf U}_j^{k,n+1} \\
 &=& \sum_{k=0}^p\frac{\omega_k}{2}\bigg( {\bf U}_j^{k,n+1^-} - \lambda_k\Big[\big\langle u_h^{n+1^-}\partial_x{\bf u}_h^{n+1^-},\phi_j^k\big\rangle_j^p \nonumber\\
&& + \delta_{k,p}(u_{j+\frac{1}{2}}^\star)^-\big({\bf U}_{j+1}^{0,n+1^-}-{\bf U}_j^{p,n+1^-}) \\
&& - \delta_{k,0}(u_{j-\frac{1}{2}}^\star)^+\big({\bf U}_{j-1}^{p,n+1^-}-{\bf U}_j^{0,n+1^-})\Big] \bigg).
\end{eqnarray*}

Then, developing the expression of the volume integral, the above equation reads
\begin{eqnarray*}
 \ol{\bf u}_j^{n+1}  &=& \sum_{k=0}^p\frac{\omega_k}{2}{\bf U}_j^{k,n+1^-} - \lambda\sum_{k=0}^p\bigg(\frac{\omega_kh}{2}U_j^{k,n+1^-}\sum_{l=0}^pd_x\phi_j^l(x_j^k){\bf U}_j^{l,n+1^-} \\
&+& \delta_{k,p}(u_{j+\frac{1}{2}}^\star)^-({\bf U}_{j+1}^{0,n+1^-}-{\bf U}_{j}^{p,n+1^-}) - \delta_{k,0}(u_{j-\frac{1}{2}}^\star)^+({\bf U}_{j-1}^{p,n+1^-}-{\bf U}_{j}^{0,n+1^-}) \bigg).
\end{eqnarray*}
Inverting indices in the double sum and rearranging terms, one easily obtains (\ref{eq:convex_comb_trp}) and positivity of coefficients follows from (\ref{eq:CFL_LPDG}). Finally, note that all  coefficients in (\ref{eq:convex_comb_trp}) are positive from condition (\ref{eq:CFL_LPDG}) with unit sum from $\sum_{k=0}^pd_x\phi_j^k=0$. Thus (\ref{eq:convex_comb_trp}) is a convex combination. \qed
\end{proof}

\begin{lemma}
Assume that $\rho_{j\in\mathbb{Z}}^{0\leq k\leq p,n}>0$, then the discrete acoustic step satisfies the following discrete entropy inequality
\begin{equation}\label{eq:entropy_ineq_acq_step}
 \eta_j^{k,n+1^-} - \eta_j^{k,n} - 2a^2\lambda_k\tau_j^{k,n}\Big[\big\langle\Pi_h^{n+1^-}u_h^{n+1^-},d_x\phi_j^k\big\rangle_j^p - \delta_{k,p}H_{j+\frac{1}{2}}^{n+1^-} + \delta_{k,0}H_{j-\frac{1}{2}}^{n+1^-}\Big] \leq 0,
\end{equation}
\noindent with
\begin{subeqnarray}\label{eq:entropy_flux_acq_step}
 \eta_j^{k,n} &=& \frac{(\vecloc{W}_j^{k,n})^2+(\cevloc{W}_j^{k,n})^2}{2}=(\Pi_j^{k,n})^2+a^2(U_j^{k,n})^2, \\
 H_{j+\frac{1}{2}}^{n+1^-} &=& \frac{(\vecloc{W}_j^{p,n+1^-})^2-(\cevloc{W}_{j+1}^{0,n+1^-})^2}{4a} = \Pi_{j+\frac{1}{2}}^\star u_{j+\frac{1}{2}}^\star.
\end{subeqnarray}
\end{lemma}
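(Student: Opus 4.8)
The plan is to diagonalize the acoustic relaxation system in the characteristic variables and reduce (\ref{eq:entropy_ineq_acq_step}) to a square–entropy estimate for two decoupled scalar linear advections. By Theorem~\ref{th:RP_hyp_relax} the invariants $\vecloc{w}=\Pi+au$ and $\cevloc{w}=\Pi-au$ are transported at the constant speeds $\pm a$, and the collocated scheme inherits this structure in the form (\ref{eq:riemann_inv_eqn_acou}a,c). I would first recast every quantity in (\ref{eq:entropy_ineq_acq_step}) in these variables: $\eta_j^{k,n}=\tfrac12\big((\vecloc{W}_j^{k,n})^2+(\cevloc{W}_j^{k,n})^2\big)$ holds by the very definition (\ref{eq:entropy_flux_acq_step}a); $2a^2\Pi_h u_h=\tfrac{a}{2}\big((\vecloc{w}_h)^2-(\cevloc{w}_h)^2\big)$ as a pointwise polynomial identity; and, since the Riemann solution of Theorem~\ref{th:RP_acq_relax} gives $\vecloc{w}^\star=\vecloc{W}_j^{p}$ and $\cevloc{w}^\star=\cevloc{W}_{j+1}^{0}$ at $x_{j+1/2}$, one has $2a^2H_{j+\frac12}^{n+1^-}=\tfrac{a}{2}\big((\vecloc{W}_j^{p,n+1^-})^2-(\cevloc{W}_{j+1}^{0,n+1^-})^2\big)$. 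After this substitution it suffices to establish, for each of the two advections separately, the estimate with entropy $\tfrac12(\cdot)^2$ and entropy flux $\tfrac{a}{2}(\cdot)^2$; the $\cevloc{w}$ balance is then obtained from the $\vecloc{w}$ one by the reflection $m\mapsto-m$ (equivalently $j\mapsto-j$, node $k\mapsto p-k$, which swaps the two endpoints of the element).

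For the $\vecloc{w}$ advection, multiply the nodal update (\ref{eq:riemann_inv_eqn_acou}a) by the \emph{implicit} value $\vecloc{W}_j^{k,n+1^-}$ and use the elementary identity
\[
 \vecloc{W}_j^{k,n+1^-}\big(\vecloc{W}_j^{k,n+1^-}-\vecloc{W}_j^{k,n}\big)=\tfrac12\big((\vecloc{W}_j^{k,n+1^-})^2-(\vecloc{W}_j^{k,n})^2\big)+\tfrac12\big(\vecloc{W}_j^{k,n+1^-}-\vecloc{W}_j^{k,n}\big)^2 .
\]
The last term is nonnegative; discarding it produces the required direction of the inequality, and this is the only place the backward–Euler integration of the acoustic step is used, so no CFL restriction enters. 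Positivity of $\rho_j^{k,n}$, i.e.\ $\tau_j^{k,n}>0$, is used so that the prefactor $a\lambda_k\tau_j^{k,n}$ appearing after multiplication is positive.

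For the spatial contribution I would invoke the summation–by–parts property of the collocated Gauss--Lobatto discretization recalled after (\ref{eq:semi-discrete_eq_trp}), following \cite{kopriva_gassner10}. Applied with $f=(\vecloc{w}_h^{n+1^-})^2$ (through its degree-$p$ interpolant) and $v_h=\phi_j^k$, together with the pointwise relation $\vecloc{w}_h\,\partial_x\vecloc{w}_h=\tfrac12\partial_x(\vecloc{w}_h)^2$, it converts $\vecloc{W}_j^{k,n+1^-}\langle\partial_x\vecloc{w}_h^{n+1^-},\phi_j^k\rangle_j^p$ into half the weak–form volume term $\langle(\vecloc{w}_h^{n+1^-})^2,d_x\phi_j^k\rangle_j^p$ plus boundary contributions at $x_j^0$ and $x_j^p$. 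Combining those boundary contributions with the interface term already present in (\ref{eq:riemann_inv_eqn_acou}a) and using $\vecloc{w}^\star=\vecloc{W}_j^{p,n+1^-}$ at each interface, the entropy–flux terms $H_{j\pm\frac12}^{n+1^-}$ are produced and the remaining boundary residual collapses into $-(\vecloc{W}_{j-1}^{p,n+1^-}-\vecloc{W}_j^{0,n+1^-})^2$ times a positive factor — the numerical dissipation of the upwind solver. Adding the $\vecloc{w}$ and $\cevloc{w}$ balances and dropping the two nonpositive (backward–Euler and interface–jump) contributions yields (\ref{eq:entropy_ineq_acq_step}).

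The step I expect to be the main obstacle is this last, spatial one. The entropy flux $\tfrac{a}{2}(\cdot)^2$ is quadratic while $\vecloc{w}_h$ has degree $p$, so one must verify that the Gauss--Lobatto quadrature does not create spurious entropy production; this is precisely where the diagonal–norm summation–by–parts structure of the Gauss--Lobatto nodes is indispensable, and it is what allows the estimate to be carried out nodewise rather than only after summation over the element. Keeping track of the sign of every leftover volume and boundary term — so that the aliasing pieces either cancel or reassemble into the manifestly nonpositive interface–jump dissipation together with the backward–Euler dissipation — is the delicate part of the computation.
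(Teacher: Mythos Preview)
Your proposal is correct and follows essentially the same route as the paper: multiply (\ref{eq:riemann_inv_eqn_acou}a) and (\ref{eq:riemann_inv_eqn_acou}c) by the implicit characteristic values, extract the backward--Euler square and the interface--jump square as the two nonpositive dissipation terms, apply summation--by--parts to reach the weak form, and add the two balances. The spatial step is handled in the paper exactly as you outline, writing $\vecloc{W}_j^{k,n+1^-}\langle\partial_x\vecloc{w}_h^{n+1^-},\phi_j^k\rangle_j^p=\langle\partial_x\big(\tfrac12(\vecloc{w}_h^{n+1^-})^2\big),\phi_j^k\rangle_j^p$ pointwise and then invoking the SBP identity on the interpolant, so no additional aliasing residuals appear beyond the two sign--definite terms you already identified.
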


\begin{proof}
 Relations (\ref{eq:entropy_flux_acq_step}) follow directly from the definitions in (\ref{eq:def_vec_cev_j}) and (\ref{eq:sol_PR_relax2}). Then, multiplying equation (\ref{eq:riemann_inv_eqn_acou}a) with $\vecloc{W}_j^{k,n+1^-}$ gives
\begin{eqnarray*}
 \vecloc{W}_j^{k,n+1^-}(\vecloc{W}_j^{k,n+1^-} - \vecloc{W}_j^{k,n}) &+& a\lambda_k\tau_j^{k,n}\Big[\frac{\omega_kh}{2}\vecloc{W}_j^{k,n+1^-}\frac{\partial\vecloc{w}_h}{\partial x}\Big|_{x_j^k}^{n+1^-} \\ &&-\delta_{k,0}\vecloc{W}_j^{0,n+1^-}\big(\vecloc{W}_{j-1}^{p,n+1^-}-\vecloc{W}_{j}^{0,n+1^-}\big)\Big] = 0,
\end{eqnarray*}

\noindent hence
\begin{eqnarray*}
 \frac{(\vecloc{W}_j^{k,n+1^-})^2}{2} &-& \frac{(\vecloc{W}_j^{k,n})^2}{2} + a\lambda_k\tau_j^{k,n}\bigg[\big\langle\partial_x(\tfrac{(\vecloc{w}_h^{n+1^-})^2}{2}),\phi_j^k\big\rangle_j^p \\
&-& \delta_{k,0}\Big(\frac{(\vecloc{W}_{j-1}^{p,n+1^-})^2}{2} 
 -\frac{(\vecloc{W}_{j}^{0,n+1^-})^2}{2}\Big)\bigg] \\
&=& -\frac{(\vecloc{W}_j^{k,n+1^-}-\vecloc{W}_j^{k,n})^2}{2}  - a\lambda_k\tau_j^{k,n}\delta_{k,0}\frac{(\vecloc{W}_{j-1}^{p,n+1^-}-\vecloc{W}_{j}^{0,n+1^-})^2}{2}.
\end{eqnarray*}

Using integration by parts in the above equation and considering the sign of its right-hand-side, one deduces
\begin{eqnarray*}
 \frac{(\vecloc{W}_j^{k,n+1^-})^2}{2} - \frac{(\vecloc{W}_j^{k,n})^2}{2} &+& a\lambda_k\tau_j^{k,n}\bigg[-\big\langle\tfrac{(\vecloc{w}_h^{n+1^-})^2}{2},d_x\phi_j^k\big\rangle_j^p \\ &+& \delta_{k,p}\frac{(\vecloc{W}_{j}^{p,n+1^-})^2}{2} -\delta_{k,0}\frac{(\vecloc{W}_{j-1}^{p,n+1^-})^2}{2} \bigg] \leq 0.
\end{eqnarray*}

Likewise, multiplying equation (\ref{eq:riemann_inv_eqn_acou}c) with $\cevloc{W}_j^{k,n+1^-}$ and applying similar manipulations give
\begin{eqnarray*}
% \frac{(\cevloc{W}_j^{k,n+1^-})^2}{2} - \frac{(\cevloc{W}_j^{k,n})^2}{2} - a\lambda_k\tau_j^{k,n}\bigg[-\big\langle\tfrac{(\cevloc{w}_h^{n+1^-})^2}{2},d_x\phi_j^k\big\rangle_j^p &+& \delta_{k,p}\frac{(\cevloc{W}_{j+1}^{0,n+1^-})^2}{2} \\ 
% &-&\delta_{k,0}\frac{(\vecloc{W}_{j}^{0,n+1^-})^2}{2} \bigg] \leq 0.
 \frac{(\cevloc{W}_j^{k,n+1^-})^2}{2} - \frac{(\cevloc{W}_j^{k,n})^2}{2} &-& a\lambda_k\tau_j^{k,n}\bigg[-\big\langle\tfrac{(\cevloc{w}_h^{n+1^-})^2}{2},d_x\phi_j^k\big\rangle_j^p \\  &+& \delta_{k,p}\frac{(\cevloc{W}_{j+1}^{0,n+1^-})^2}{2} -\delta_{k,0}\frac{(\cevloc{W}_{j}^{0,n+1^-})^2}{2} \bigg] \leq 0.
\end{eqnarray*}

Summing the two last equations give the desired inequality (\ref{eq:entropy_ineq_acq_step}). \qed
\end{proof}

\begin{lemma}\label{th:entropy_ineq_ac_step}
 Assume that $\rho_{j\in\mathbb{Z}}^{0\leq k\leq p,n}>0$, then under the CFL condition (\ref{eq:CFL_LPDG}) and subcharacteristic condition (\ref{eq:sub_cond_discr}), the discrete acoustic step satisfies the following discrete entropy inequality
\begin{equation}\label{eq:energy_ineq_acq_step}
E_j^{k,n+1^-} - E_j^{k,n} - \lambda_k\tau_j^{k,n}\Big[\big\langle\Pi_h^{n+1^-}u_h^{n+1^-},d_x\phi_j^k\big\rangle_j^p- \delta_{k,p}H_{j+\frac{1}{2}}^{n+1^-} + \delta_{k,0}H_{j-\frac{1}{2}}^{n+1^-}\Big] \leq 0,
\end{equation}

\noindent with $H_{j\pm\frac{1}{2}}^{n+1^-}$ given by (\ref{eq:entropy_flux_acq_step}b).
\end{lemma}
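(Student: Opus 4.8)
The plan is to \emph{derive} the energy inequality \eqref{eq:energy_ineq_acq_step} from the quadratic estimate \eqref{eq:entropy_ineq_acq_step} of the previous lemma by a nodal comparison of the two entropies. Dividing \eqref{eq:entropy_ineq_acq_step} by $2a^2$ produces exactly the volume and boundary terms carrying the factor $\lambda_k\tau_j^{k,n}$ that appear in \eqref{eq:energy_ineq_acq_step} — the numerical flux $H_{j\pm1/2}^{n+1^-}$ being literally the same object in both statements — so it suffices to prove the nodal bound
\[
 E_j^{k,n+1^-}-\frac{\eta_j^{k,n+1^-}}{2a^2}\ \le\ E_j^{k,n}-\frac{\eta_j^{k,n}}{2a^2},\qquad 0\le k\le p,\ j\in\mathbb{Z}.
\]
Since $E=e(\tau)+\tfrac{u^2}{2}$ and $\eta=\Pi^2+a^2U^2$ by (\ref{eq:entropy_flux_acq_step}a), the kinetic contributions cancel and this is equivalent to $e(\tau_j^{k,n+1^-})-\tfrac{(\Pi_j^{k,n+1^-})^2}{2a^2}\le e(\tau_j^{k,n})-\tfrac{(\Pi_j^{k,n})^2}{2a^2}$.

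To make sense of the left node value I would first invoke $\tau_j^{k,n+1^-}>0$, i.e.\ \eqref{eq:pos_ac_step} of Lemma~\ref{th:pos_ineq_acq}, valid under the CFL condition \eqref{eq:CFL_LPDG}. Then I would use the two remaining structural facts: the data are at equilibrium, $\Pi_j^{k,n}=\mathrm{p}(\tau_j^{k,n})$, and the discrete Riemann invariant $J$ is preserved by the acoustic step, (\ref{eq:riemann_inv_eqn_acou}b), i.e.\ $\Pi_j^{k,n+1^-}+a^2\tau_j^{k,n+1^-}=\Pi_j^{k,n}+a^2\tau_j^{k,n}$. Eliminating $\Pi_j^{k,n+1^-}=\mathrm{p}(\tau_j^{k,n})+a^2(\tau_j^{k,n}-\tau_j^{k,n+1^-})$ turns the desired inequality into $\psi(\tau_j^{k,n+1^-})\le\psi(\tau_j^{k,n})$ for the scalar function
\[
 \psi(\tau):=e(\tau)-\frac{1}{2a^2}\Big(\mathrm{p}(\tau_j^{k,n})+a^2(\tau_j^{k,n}-\tau)\Big)^2 .
\]

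It then remains to study $\psi$ on the segment joining $\tau_j^{k,n}$ and $\tau_j^{k,n+1^-}$. Using $e'(\tau)=-\mathrm{p}(\tau)$ one finds $\psi'(\tau)=\big(\mathrm{p}(\tau_j^{k,n})-\mathrm{p}(\tau)\big)+a^2(\tau_j^{k,n}-\tau)$, hence $\psi'(\tau_j^{k,n})=0$, and $\psi''(\tau)=-\mathrm{p}'(\tau)-a^2$, which is strictly negative for every $\tau=\theta\tau_j^{k,n}+(1-\theta)\tau_j^{k,n+1^-}$, $\theta\in[0,1]$, precisely by the discrete subcharacteristic condition \eqref{eq:sub_cond_discr}. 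Thus $\psi$ is strictly concave on that segment and has a critical point at the endpoint $\tau_j^{k,n}$, so it attains its maximum there, which yields $\psi(\tau_j^{k,n+1^-})\le\psi(\tau_j^{k,n})$ and closes the argument; no summation over $k$ or $j$ is needed since the statement is nodal. I do not anticipate a genuine obstacle: the only points to get right are the exact matching of the two entropies up to the factor $2a^2$ (and of the flux $H$), and the fact that \eqref{eq:sub_cond_discr} bounds $-\mathrm{p}'$ on exactly the convex hull of $\{\tau_j^{k,n},\tau_j^{k,n+1^-}\}$ used in the concavity step, which holds by construction.
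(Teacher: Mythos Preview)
Your proposal is correct and follows essentially the same route as the paper: reduce to the nodal bound $E_j^{k,n+1^-}-\eta_j^{k,n+1^-}/2a^2\le E_j^{k,n}-\eta_j^{k,n}/2a^2$, cancel the kinetic part, use $J$-invariance (\ref{eq:riemann_inv_eqn_acou}b) and the equilibrium relation $\Pi_j^{k,n}=\mathrm{p}(\tau_j^{k,n})=-e'(\tau_j^{k,n})$, and conclude via the subcharacteristic condition, then combine with \eqref{eq:entropy_ineq_acq_step} divided by $2a^2$. The only cosmetic difference is that the paper writes the final step as a second-order Taylor expansion with integral remainder $\int_0^1(e''-a^2)(1-\xi)\,d\xi\le0$, whereas you package the same computation as concavity of $\psi$ (noting $\psi''=e''-a^2=-\mathrm{p}'-a^2<0$) with a critical point at $\tau_j^{k,n}$; the two are identical in content.
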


\begin{proof}
Using (\ref{eq:entropy_flux_acq_step}a) and (\ref{eq:discr_tot_nrj}), the specific total energy may be written as $E_j^{k,n}=e(\tau_j^{k,n})+(\eta_j^{k,n}-(\Pi_j^{k,n})^2)/2a^2$. Hence, we have
\begin{eqnarray*}
 E_j^{k,n+1^-} - E_j^{k,n} &=& e(\tau_j^{k,n+1^-}) - e(\tau_j^{k,n}) + \frac{\eta_j^{k,n+1^-}-\eta_j^{k,n}}{2a^2} \\ 
 &&- \frac{(\Pi_j^{k,n+1^-}-\Pi_j^{k,n})^2}{2a^2} - \frac{\Pi_j^{k,n}(\Pi_j^{k,n+1^-}-\Pi_j^{k,n})}{a^2}.
\end{eqnarray*}

Using (\ref{eq:riemann_inv_eqn_acou}b) to substitute $\Pi_j^{k,n+1^-}-\Pi_j^{k,n}=-a^2(\tau_j^{k,n+1^-}-\tau_j^{k,n})$ and the fact that data at equilibrium impose $\Pi_j^{k,n}=\mathrm{p}(\tau_j^{k,n})=-e'(\tau_j^{k,n})$, we have
\begin{eqnarray*}
 E_j^{k,n+1^-} - E_j^{k,n} &-& \frac{\eta_j^{k,n+1^-}-\eta_j^{k,n}}{2a^2} =  e(\tau_j^{k,n+1^-}) - e(\tau_j^{k,n})\\  &-& e'(\tau_j^{k,n})(\tau_j^{k,n+1^-}-\tau_j^{k,n})-\frac{a^2}{2}(\tau_j^{k,n+1^-}-\tau_j^{k,n})^2.
\end{eqnarray*}

Applying a second-order Taylor development with integral remainder of $e(\tau_j^{k,n+1^-})$ about $\tau_j^{k,n}$, we obtain
\begin{eqnarray*}
 E_j^{k,n+1^-} - E_j^{k,n} - \frac{\eta_j^{k,n+1^-}-\eta_j^{k,n}}{2a^2} = (\tau_j^{k,n+1^-}-\tau_j^{k,n})^2 \times \cdots \\ \int_0^1\big(e''(\tau_j^{k,n}+\xi(\tau_j^{k,n+1^-}-\tau_j^{k,n}))-a^2\big)(1-\xi)d\xi \leq 0,
\end{eqnarray*}

\noindent under the subcharacteristic condition (\ref{eq:sub_cond_discr}). Finally, using (\ref{eq:entropy_ineq_acq_step}) gives (\ref{eq:energy_ineq_acq_step}). \qed
\end{proof}

\begin{theorem}%[positivity and discrete entropy inequality] 
\label{th:pos_ineq_LPDG}
Assume that $\rho_{j\in\mathbb{Z}}^{0\leq k\leq p,n}>0$, then under the CFL condition (\ref{eq:CFL_LPDG}) and subcharacteristic condition (\ref{eq:sub_cond_discr}), the LPDG scheme satisfies positivity for the mean value of the solution:
\begin{equation}\label{eq:positivity_LPDG}
 \ol{\rho}_j^{n+1} > 0,
\end{equation}
\noindent and the discrete entropy inequality
\begin{equation}\label{eq:energy_ineq_LPDG}
 \rho E(\ol{\bf u}_j^{n+1}) - \ol{\rho E}_j^n + \lambda\Big(u_{j+\frac{1}{2}}^\star(\widehat{\rho E}_{j+\frac{1}{2}}^{n+1^-}+\Pi_{j+\frac{1}{2}}^\star) - u_{j-\frac{1}{2}}^\star(\widehat{\rho E}_{j-\frac{1}{2}}^{n+1^-}+\Pi_{j-\frac{1}{2}}^\star)\Big) \leq 0,
\end{equation}
\noindent where $\widehat{\rho E}_{j\pm\frac{1}{2}}^{n+1^-}$ denote upwind fluxes of the form (\ref{eq:upwind_flux}) evaluated at time $t^{(n+1^-)}$.
\end{theorem}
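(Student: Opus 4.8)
The strategy is to treat the two claims separately but with the same underlying mechanism: express each mean-value quantity at $t^{(n+1)}$ after the transport step as a convex combination of cell quantities at $t^{(n+1^-)}$, and then invoke convexity of the relevant function together with the acoustic-step estimates already established. For positivity \eqref{eq:positivity_LPDG}, I would start from Lemma~\ref{th:pos_ineq_acq}: equation \eqref{eq:pos_ac_step} gives $\rho_j^{k,n+1^-}>0$ for all $k$, and \eqref{eq:convex_comb_trp} writes $\ol{\bf u}_j^{n+1}$ as a convex combination of the vectors ${\bf U}_j^{k,n+1^-}$, ${\bf U}_{j+1}^{0,n+1^-}$, ${\bf U}_{j-1}^{p,n+1^-}$, all of which have strictly positive first component. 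Since a convex combination of vectors with positive density has positive density, $\ol{\rho}_j^{n+1}>0$ follows immediately. (One should note the coefficient of ${\bf U}_{j+1}^{0,n+1^-}$ is $-\lambda(u_{j+1/2}^\star)^-\ge0$ and similarly for the other boundary term, so the combination is genuinely over states with positive density.)

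For the entropy inequality \eqref{eq:energy_ineq_LPDG}, the plan is: (i) apply Jensen's inequality to the strictly convex map ${\bf u}\mapsto\rho E({\bf u})$ on $\Omega^a$, using the convex-combination representation \eqref{eq:convex_comb_trp}, to get
\begin{equation*}
 \rho E(\ol{\bf u}_j^{n+1}) \le \sum_{k=0}^p\Big(\tfrac{\omega_k}{2}-\lambda\theta_j^k\Big)\rho E_j^{k,n+1^-} - \lambda(u_{j+\frac{1}{2}}^\star)^-\rho E_{j+1}^{0,n+1^-} + \lambda(u_{j-\frac{1}{2}}^\star)^+\rho E_{j-1}^{p,n+1^-},
\end{equation*}
where $\theta_j^k=\langle u_h^{n+1^-},d_x\phi_j^k\rangle_j^p-\delta_{k,p}(u_{j+1/2}^\star)^-+\delta_{k,0}(u_{j-1/2}^\star)^+$; (ii) rewrite the right-hand side by reversing the index-shuffling done in Lemma~\ref{th:pos_ineq_acq}, so that it becomes $\ol{\rho E}_j^{n+1^-}$ minus a discrete transport-flux divergence of $\rho E$ with the upwind flux $u_{j\pm1/2}^\star\widehat{\rho E}_{j\pm1/2}^{n+1^-}$ — i.e.\ exactly the transport-step update applied to the nonconservative-looking quantity $\rho E$, summed against $\omega_k/2$; (iii) replace $\ol{\rho E}_j^{n+1^-}$ using the acoustic-step energy estimate: summing Lemma~\ref{th:entropy_ineq_ac_step}'s inequality \eqref{eq:energy_ineq_acq_step} against $\omega_k/2$ over $k$, the volume terms $\langle\Pi_h^{n+1^-}u_h^{n+1^-},d_x\phi_j^k\rangle_j^p$ telescope via $\sum_k(\omega_k/2)\lambda_k\tau_j^{k,n}\langle\cdot,d_x\phi_j^k\rangle_j^p = \lambda\sum_k\langle\cdot,d_x\phi_j^k\rangle_j^p\cdot$ — here one must be careful because of the factor $\tau_j^{k,n}$ — wait, one should instead use that $L_j^{k,n+1^-}$ was defined precisely so that $\tau_j^{k,n+1^-}=L_j^{k,n+1^-}\tau_j^{k,n}$, and that the DG mass matrix under Gauss–Lobatto collocation makes $\sum_k(\omega_k/2)\langle g,d_x\phi_j^k\rangle_j^p$ telescope to the boundary terms $\tfrac{h}{2}[g(x_j^p)-g(x_j^0)]\cdot$ after summation by parts; (iv) combine the surviving boundary terms from the acoustic step ($H_{j\pm1/2}^{n+1^-}=\Pi_{j+1/2}^\star u_{j+1/2}^\star$) with those from the transport step ($u_{j\pm1/2}^\star\widehat{\rho E}_{j\pm1/2}^{n+1^-}$) to form the numerical entropy flux $u_{j+1/2}^\star(\widehat{\rho E}_{j+1/2}^{n+1^-}+\Pi_{j+1/2}^\star)$, which yields \eqref{eq:energy_ineq_LPDG}.

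The main obstacle will be step (ii)–(iii): correctly bookkeeping the interplay between the factors $\tau_j^{k,n}$ appearing in the acoustic estimate \eqref{eq:energy_ineq_acq_step}, the factor $L_j^{k,n+1^-}$ relating the specific volume at the two substeps, and the Gauss–Lobatto summation-by-parts identity, so that the volume integrals of $\Pi_h u_h$ cancel cleanly and only the correct interface fluxes remain. In particular, one needs that summing $\lambda_k\tau_j^{k,n}=2\lambda\tau_j^{k,n}/\omega_k$ against the quadrature weight $\omega_k/2$ gives $\lambda\tau_j^{k,n}$ per node, and that the discrete derivative terms $\sum_k\tau_j^{k,n}\langle\Pi_h^{n+1^-}u_h^{n+1^-},d_x\phi_j^k\rangle_j^p$ reduce — via summation by parts and the relation $\partial_m=\tau\partial_x$ encoded in the mass-variable approximation $\Delta m=\tau_j^{k,n}h$ — to a pure telescoping flux difference. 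The positivity of all convex-combination coefficients, which is exactly the CFL condition \eqref{eq:CFL_LPDG}, is what legitimizes the use of Jensen's inequality in step (i); the subcharacteristic condition \eqref{eq:sub_cond_discr} enters only through Lemma~\ref{th:entropy_ineq_ac_step} in step (iii). Once the algebra of (ii)–(iii) is organized, the conclusion is a direct assembly.
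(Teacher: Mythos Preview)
Your overall plan matches the paper's: positivity comes directly from the convex combination \eqref{eq:convex_comb_trp} together with \eqref{eq:pos_ac_step}, and the entropy inequality is obtained by combining Jensen's inequality applied to \eqref{eq:convex_comb_trp} with the acoustic-step estimate \eqref{eq:energy_ineq_acq_step}. The positivity argument is complete as you wrote it.

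The obstacle you flag in step~(iii), however, is a genuine gap that your proposal does not close. Inequality \eqref{eq:energy_ineq_acq_step} is written for the \emph{specific} energy $E_j^{k,n+1^-}-E_j^{k,n}$ and carries the node-dependent factor $\lambda_k\tau_j^{k,n}$. Summing it against $\omega_k/2$ yields neither $\ol{\rho E}_j^{n+1^-}$ nor a telescoping of the volume term $\sum_k\tau_j^{k,n}\langle\Pi_h u_h,d_x\phi_j^k\rangle_j^p$, because the weights $\tau_j^{k,n}$ vary with $k$; summation by parts alone cannot absorb them, and the relation $\partial_m=\tau\partial_x$ does not help at the discrete nodal level. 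The missing step is to multiply \eqref{eq:energy_ineq_acq_step} by $\rho_j^{k,n}>0$ \emph{before} summing. Since $\rho_j^{k,n}\tau_j^{k,n}=1$, the troublesome factor disappears; since $\rho_j^{k,n}=L_j^{k,n+1^-}\rho_j^{k,n+1^-}$ by \eqref{eq:discrete_ac_step_in_cons_form}a, the left side becomes $L_j^{k,n+1^-}\rho E_j^{k,n+1^-}-\rho E_j^{k,n}$. Expanding $L_j^{k,n+1^-}$ via \eqref{eq:def_Ljkn}a then generates precisely the terms $\rho E_j^{k,n+1^-}\langle\partial_x u_h,\phi_j^k\rangle_j^p=\langle\rho E_h\partial_x u_h,\phi_j^k\rangle_j^p$ (using collocation) which, after a summation by parts and summing against $\omega_k/2$, combine with the transport contributions from your step~(ii) so that all interior terms cancel and only the interface fluxes $u_{j\pm1/2}^\star(\widehat{\rho E}_{j\pm1/2}^{n+1^-}+\Pi_{j\pm1/2}^\star)$ survive. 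Once this multiplication by $\rho_j^{k,n}$ is inserted, the remainder of your assembly goes through exactly as you outline.
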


\begin{proof}
From assumptions of Theorem~\ref{th:pos_ineq_LPDG}, the results of Lemmas~\ref{th:pos_ineq_acq} and \ref{th:entropy_ineq_ac_step} hold. We thus infer positivity in (\ref{eq:positivity_LPDG}) by using the convex combination (\ref{eq:convex_comb_trp}) with $\rho_{j\in\mathbb{Z}}^{0\leq k\leq p,n+1^-}>0$.

Then, the entropy inequality (\ref{eq:energy_ineq_LPDG}) follows from the following arguments. Multiplying (\ref{eq:energy_ineq_acq_step}) with $\rho_j^{k,n}=L_j^{k,n+1^-}\rho_j^{k,n+1^-}$ and using (\ref{eq:def_Ljkn}a), we get
\begin{eqnarray*}
 \rho E_j^{k,n+1^-} - \rho E_j^{k,n} + \lambda_k\rho E_j^{k,n+1^-}\Big[\big\langle\partial_xu_h^{n+1^-},\phi_j^k\big\rangle_j^p && \\ +\delta_{k,p}(u_{j+\frac{1}{2}}^\star-U_j^{p,n+1^-}) -\delta_{k,0}(u_{j-\frac{1}{2}}^\star-U_j^{0,n+1^-})\Big] && \\ + \lambda_k\Big[-\big\langle\Pi_h^{n+1^-}u_h^{n+1^-},d_x\phi_j^k\rangle_j^p + \delta_{k,p}u_{j+\frac{1}{2}}^\star\Pi_{j+\frac{1}{2}}^\star - \delta_{k,0}u_{j-\frac{1}{2}}^\star\Pi_{j-\frac{1}{2}}^\star\Big] &\leq& 0.
\end{eqnarray*}

The first volume integral may be rewritten as
\begin{eqnarray*}
 \rho E_j^{k,n+1^-}\big\langle\partial_xu_h^{n+1^-},\phi_j^k\big\rangle_j^p &=& \frac{\omega_kh}{2}\rho E_j^{k,n+1^-}\frac{\partial u_h}{\partial x}\Big|_{x_j^k}^{n+1^-} = \big\langle\rho E_h^{n+1^-}\partial_xu_h^{n+1^-},\phi_j^k\big\rangle_j^p,
% &=& \big\langle u_h^{n+1^-},\partial_x(\rho E_h^{n+1^-}\phi_j^k)\big\rangle_j^p + \delta_{k,p}\rho E_j^{p,n+1^-}U_j^{p,n+1^-} - \delta_{k,0}\rho E_j^{0,n+1^-}U_j^{0,n+1^-},
\end{eqnarray*}

\noindent and using integration by parts one thus obtains
\begin{eqnarray*}
 \rho E_j^{k,n+1^-} - \rho E_j^{k,n} + \lambda_k\Big[-\big\langle u_h^{n+1^-},\partial_x(\rho E_h^{n+1^-}\phi_j^k)+\Pi_h^{n+1^-}d_x\phi_j^k\big\rangle_j^p && \\ 
 + \delta_{k,p}u_{j+\frac{1}{2}}^\star(\rho E_j^{p,n+1^-}+\Pi_{j+\frac{1}{2}}^\star) - \delta_{k,0}u_{j-\frac{1}{2}}^\star(\rho E_j^{0,n+1^-}+\Pi_{j-\frac{1}{2}}^\star)\Big] &\leq& 0.
\end{eqnarray*}

Summing over $0\leq k\leq p$ with weights $\omega_k/2$, one obtains
\begin{eqnarray}\label{eq:energy_ineq_acq_step2}
 \ol{\rho E}_j^{n+1^-} - \ol{\rho E}_j^{n} + \lambda\Big(-\big\langle u_h^{n+1^-},\partial_x\rho E_h^{n+1^-}\big\rangle_j^p  &&\nonumber\\ + u_{j+\frac{1}{2}}^\star(\rho E_j^{p,n+1^-}+\Pi_{j+\frac{1}{2}}^\star)
 - u_{j-\frac{1}{2}}^\star(\rho E_j^{0,n+1^-}+\Pi_{j-\frac{1}{2}}^\star)\Big) &\leq& 0.
\end{eqnarray}

Now, by convexity of the mapping $\rho E({\bf u})$, the convex combination (\ref{eq:convex_comb_trp}) gives
\begin{eqnarray*}
 {\rho E}(\ol{\bf u}_j^{n+1}) - \sum_{k=0}^p\bigg(\frac{\omega_k}{2}-\lambda\Big(\big\langle u_h^{n+1^-},d_x\phi_j^k\big\rangle_j^p-\delta_{k,p}(u_{j+\frac{1}{2}}^\star)^- && \\ +\delta_{k,0}(u_{j-\frac{1}{2}}^\star)^+\Big)\bigg)\rho E_j^{k,n+1^-} +\lambda (u_{j+\frac{1}{2}}^\star)^-\rho E_{j+1}^{0,n+1^-} - \lambda (u_{j-\frac{1}{2}}^\star)^+\rho E_{j-1}^{p,n+1^-} \leq 0.
\end{eqnarray*}

Using integration by parts, we obtain
\begin{eqnarray*}
 {\rho E}(\ol{\bf u}_j^{n+1}) &-& \ol{\rho E}_j^{n+1^-} + \lambda\sum_{k=0}^p\Big(-\big\langle\partial_x u_h^{n+1^-},\phi_j^k\big\rangle_j^p \\ &-&\delta_{k,p}((u_{j+\frac{1}{2}}^\star)^--U_j^{p,n+1^-}) +\delta_{k,0}((u_{j-\frac{1}{2}}^\star)^+-U_j^{0,n+1^-})\Big)\rho E_j^{k,n+1^-} \\ &+& \lambda (u_{j+\frac{1}{2}}^\star)^-\rho E_{j+1}^{0,n+1^-} - \lambda (u_{j-\frac{1}{2}}^\star)^+\rho E_{j-1}^{p,n+1^-} \leq 0.
\end{eqnarray*}

Using again integration by parts and the definition of the numerical flux (\ref{eq:upwind_flux}) applied to $\rho E_h$, one obtains
\begin{eqnarray}\label{eq:energy_ineq_trp_step}
 {\rho E}(\ol{\bf u}_j^{n+1}) - \ol{\rho E}_j^{n+1^-} + \lambda \Big(\big\langle u_h^{n+1^-},\partial_x\rho E_h^{n+1^-}\big\rangle_j^p && \nonumber\\ +(u_{j+\frac{1}{2}}^\star)^-(\widehat{\rho E}_{j+\frac{1}{2}}^{n+1^-}\!-\!\rho E_{j}^{p,n+1^-}) 
-(u_{j-\frac{1}{2}}^\star)^+(\widehat{\rho E}_{j-\frac{1}{2}}^{n+1^-}\!-\!\rho E_{j}^{0,n+1^-}) \Big) &\leq& 0.
\end{eqnarray}

Summing (\ref{eq:energy_ineq_acq_step2}) and (\ref{eq:energy_ineq_trp_step}) gives (\ref{eq:energy_ineq_LPDG}). \qed
\end{proof}

%\begin{theorem}%[positivity and discrete entropy inequality] 
%\label{th:pos_rho_k_n+1}
 %Assume that assumptions of Theorem~\ref{th:pos_ineq_acq}, then the density coefficients $\rho_{j\in\mathbb{Z}}^{0\leq k\leq p,n+1}$ are all strictly positive under the further condition
%\begin{equation}\label{eq:CFL_pos_rho_k_n+1}
 %\lambda\max_{j\in\mathbb{Z}}\max_{0\leq k\leq p}\Big(\frac{h}{2}U_j^{k,n+1^-}\sum_{l\neq k}\big(\frac{\rho_j^{l,n+1^-}}{\rho_j^{k,n+1^-}}-1\big)d_x\phi_j^l(x_j^k)-\frac{\delta_{k,p}}{\omega_p}(u_{j+\frac{1}{2}}^\star)^-+\frac{\delta_{k,0}}{\omega_0}(u_{j-\frac{1}{2}}^\star)^+\Big) < \frac{1}{2}.
%\end{equation}
%\end{theorem}
%\begin{proof}
%\end{proof}

%
\subsection{High-order time discretization}\label{sec:HO_time_discr}

The present method is extended to high-order time integration by using strong-stability preserving explicit Runge-Kutta methods \cite{shu-osher88,spiteri_ruuth02}. These methods consist in convex combinations of first-order forward Euler methods and thus will keep positivity of Theorem~\ref{th:pos_ineq_LPDG} under a given CFL condition. We note however that the first-order time discretization (\ref{eq:discrete_eq_Euler}) is not an explicit forward Euler method because the residuals are evaluated at an intermediate time step $t^{(n+1^-)}$. As a consequence, accuracy in time is not guaranteed when using high-order Runge-Kutta schemes. The design of adapted high-order time integration is beyond the scope of the present study. However, the numerical experiments in section~\ref{sec:num_xp} tend to indicate that explicit Runge-Kutta time integration do not alter accuracy of the present method and are thus well adapted in practice at least for the present range of applications.

\subsection{Limiting strategy}\label{sec:limiters}

The properties in Theorem~\ref{th:pos_ineq_LPDG} hold only for the mean value in mesh elements of the numerical solution at time $t^{(n+1)}$, which is not sufficient for robustness and stability of numerical computations. However, these results may motivate the use of \textit{a posteriori} limiters introduced in \cite{zhang_shu_10a,zhang_shu_10b}. These limiters aim at extending preservation of invariant domains \cite{zhang_shu_10b} or maximum-principle  \cite{zhang_shu_10a} from mean to nodal values within elements. Our strategy differs slightly from the ones in \cite{zhang_shu_10a,zhang_shu_10b}, so we describe it in the following.

First, we enforce positivity of nodal values of density by using the linear limiter 
\begin{equation}\label{eq:pos_limiter}
 \breve{\bf U}_j^{k,n+1} = \theta_j^\rho({\bf U}_j^{k,n+1}- \ol{\bf u}_j^{n+1}) + \ol{\bf u}_j^{n+1},
\end{equation}

\noindent with $0\leq \theta_j^\rho\leq1$ defined by
\begin{equation*}
 \theta_j^\rho = \min\Big(\frac{\ol\rho_j^n-\epsilon}{\ol\rho_j^n-\rho_j^{min}},1\Big), \quad \rho_j^{min}=\min_{0\leq k\leq p} \rho_j^{k,n+1},
\end{equation*}

\noindent and $0<\epsilon\ll1$ a parameter.

Then, we strengthen the entropy inequality (\ref{eq:energy_ineq_LPDG}) by observing that the discrete transport step (\ref{eq:discrete_eq_trp}) satisfies a maximum principle for any convex function ${\cal U}:\Omega^a\ni{\bf u}\rightarrow {\cal U}({\bf u})\in\mathbb{R}$. Indeed, using (\ref{eq:convex_comb_trp}) we obtain
\begin{equation*}
 {\cal U}(\ol{\bf u}_j^{n+1}) \leq {\cal U}_j^{n+1^-}:= \max\big({\cal U}({\bf U}_{j-1}^{p,n+1^-}),{\cal U}({\bf U}_j^{0\leq k\leq p,n+1^-}),{\cal U}({\bf U}_{j+1}^{0,n+1^-})\big),
\end{equation*}

\noindent we thus impose a maximum principle at nodal values from
\begin{equation}\label{eq:entropy_limiter}
 \tilde{\bf U}_j^{k,n+1} = \theta_j^s(\breve{\bf U}_j^{k,n+1}- \ol{\bf u}_j^{n+1}) + \ol{\bf u}_j^{n+1},
\end{equation}
\noindent with $0\leq \theta_j^s\leq1$ defined by
\begin{equation*}
 \theta_j^s = \min_ {0\leq k\leq p}\Big(\theta_{j}^{s,k}: \quad {\cal U}\big(\theta_{j}^{s,k}(\breve{\bf U}_j^{k,n+1}- \ol{\bf u}_j^{n+1}) + \ol{\bf u}_j^{n+1}\big) =  {\cal U}_j^{n+1^-}\Big).
\end{equation*}

Note that when $\breve{\bf U}_j^{k,n+1}$ is not in $\Omega^a$, there exists a unique $0\leq\theta_{j}^{s,k}\leq1$ such that the above relation holds by convexity of ${\cal U}$ since $\ol{\bf u}_j^{n+1}$ is in $\Omega^a$. In practice, we use the total energy as entropy. Finally, we replace the DOFs at time $t^{(n+1)}$ by the limited values $ \tilde{\bf U}_{j\in\mathbb{Z}}^{0\leq k\leq p,n+1}$. For high-order time integration, we apply the limiter after each stage of the Runge-Kutta scheme. We stress that the limiters (\ref{eq:pos_limiter}) and (\ref{eq:entropy_limiter}) preserve conservation and accuracy for smooth solutions \cite{zhang_shu_10a,zhang_shu_10b}.

We end this section by summing up our strategy at the discrete level with the following algorithm applied at each stage of the Runge-Kutta method: 
\begin{enumerate}[(i)]
 \item solve the linear system (\ref{eq:riemann_inv_eqn_acou}) for the characteristic variables (\ref{eq:def_vec_cev_j}) with data at equilibrium (\ref{eq:projections_n_n+1^-}a); 
 \item compute the conservative variables with (\ref{eq:def_vec_cev_j}); 
 \item compute the discrete residuals of the LPDG scheme (\ref{eq:discrete_eq_Euler}) with these values; 
 \item apply the limiters (\ref{eq:pos_limiter}) and (\ref{eq:entropy_limiter}).
\end{enumerate}

%
%%%
%
\section{Numerical experiments}\label{sec:num_xp}

In this section we present several numerical experiments to illustrate the performances of the LPDG scheme derived in this work. For all experiments, we consider an isentropic polytropic ideal gas with an equation of state of the form $\mathrm{p}(\tau)=\kappa\tau^{-\gamma}$ with $\kappa>0$ and $\gamma>1$.

We use strong-stability preserving Runge-Kutta time integration schemes of order $p+1$ when using polynomials of degree $p$ for the space discretization: the two-stage second-order Heun method for $p=1$, the three-stage third-order scheme of Shu-Osher \cite{shu-osher88} for $p=2$, and the five-stage fourth-order scheme of Spiteri and Ruuth \cite{spiteri_ruuth02} for $p=3$, respectively.

Finally, the \textit{a priori} CFL condition (\ref{eq:CFL_LPDG}) and subcharacteristic condition (\ref{eq:sub_cond_discr}) are imposed at time $t^{(n)}$ as was proposed in \cite{coquel_etal_10,chalons_etal_13}.

%For the sake of comparison, we will compare the performances of the method to that obtained with an explicit time integration with the same Runge-Kutta schemes. In this case, the time step is limited vis a classical CFL condition \cite{cockburn-shu01}:

%\begin{equation*}
% \lambda\max\{|\nabla_{\bf u}{\bf f}({\bf u}_h(x,t^{(n)}))|:\;x\in\Omega_h\} \leq \frac{1}{2p+1}, \quad n\geq1.
%\end{equation*}

%
\subsection{Manufactured smooth solution}

We first consider the convection of a density wave in a uniform flow with Mach number $M_\infty$. Let $\Omega=(0,1)$, we solve
\begin{equation*}
 \partial_t {\bf u} + \partial_x{\bf f}({\bf u}) = {\bf s}, \quad \mbox{in }\Omega\times(0,\infty),
\end{equation*}

\noindent with periodicity conditions and initial condition
\begin{equation*}
 \rho_0(x) = 1+\epsilon\sin(2\pi x), \quad u_0(x) = 1, \quad \forall x\in\Omega,
\end{equation*}

\noindent with $\epsilon=0.2$. The source term ${\bf s}={\bf s}(x,t)$ is such that the exact solution for this problem reads 
\begin{equation*}
 \rho(x,t) = 1+\epsilon\sin(2\pi(x-t)), \quad u(x,t) = 1, \quad \forall x\in\Omega, t>0.
\end{equation*}

The parameters of the equation of state are $\kappa=1/\gamma M_\infty^2$ with $M_\infty=0.1$ and $\gamma=1.4$. Table~\ref{tab:density_wave_error} indicates different norms of the numerical error on density $e_h=\rho_h-\rho$ for different polynomial degrees and grid refinements with associated convergence orders in space. The expected $p+1$ order of convergence is recovered with the present method. 

\begin{table}
     \begin{center}
     \caption{Manufactured smooth solution: different norms of the error at time $t=5$ and associated orders of convergence.}
     \begin{tabular}{clcccccc}
        \noalign{\smallskip}\hline\noalign{\smallskip}
 	$p$ & $h$ & $\|e_h\|_{L^1(\Omega)}$  & ${\cal O}_1$ & $\|e_h\|_{L^2(\Omega)}$  & ${\cal O}_2$ & $\|e_h\|_{L_\infty(\Omega)}$  & ${\cal O}_\infty$ \\
        \noalign{\smallskip}\hline\noalign{\smallskip}
  	    & $1/4$	& $0.35511e\!-\!01$ & $-$    & $0.45109e\!-\!01$ & $-$    & $0.85997e\!-\!01$ & $-$   \\
  	    & $1/8$	& $0.97747e\!-\!02$ & $1.86$ & $0.12770e\!-\!01$ & $1.82$ & $0.31154e\!-\!01$ & $1.46$\\
  	  1 & $1/16$	& $0.24729e\!-\!02$ & $1.98$ & $0.32942e\!-\!02$ & $1.95$ & $0.78816e\!-\!02$ & $1.98$\\
  	    & $1/32$	& $0.61003e\!-\!03$ & $2.01$ & $0.81665e\!-\!03$ & $2.01$ & $0.19587e\!-\!02$ & $2.00$\\
  	    & $1/64$	& $0.14369e\!-\!03$ & $2.08$ & $0.19544e\!-\!03$ & $2.06$ & $0.45383e\!-\!03$ & $2.10$\\
        \noalign{\smallskip}\hline\noalign{\smallskip}
  	    & $1/4$	& $0.50087e\!-\!02$ & $-$    & $0.76828e\!-\!02$ & $-$    & $0.26541e\!-\!01$ & $-$   \\ 
  	    & $1/8$	& $0.42909e\!-\!03$ & $3.54$ & $0.65902e\!-\!03$ & $3.54$ & $0.24946e\!-\!02$ & $3.41$\\
  	  2 & $1/16$	& $0.33870e\!-\!04$ & $3.66$ & $0.53700e\!-\!04$ & $3.61$ & $0.21400e\!-\!03$ & $3.54$\\
  	    & $1/32$	& $0.50879e\!-\!05$ & $2.73$ & $0.79338e\!-\!05$ & $2.75$ & $0.25079e\!-\!04$ & $3.09$\\
  	    & $1/64$	& $0.50163e\!-\!06$ & $3.34$ & $0.75007e\!-\!06$ & $3.40$ & $0.23383e\!-\!05$ & $3.42$\\
        \noalign{\smallskip}\hline\noalign{\smallskip}
  	    & $1/4$	& $0.39559e\!-\!03$ & $-$    & $0.55401e\!-\!03$ & $-$    & $0.19667e\!-\!02$ & $-$   \\
  	    & $1/8$	& $0.23759e\!-\!04$ & $4.05$ & $0.36194e\!-\!04$ & $3.93$ & $0.13136e\!-\!03$ & $3.90$\\
  	  3 & $1/16$	& $0.15068e\!-\!05$ & $3.97$ & $0.22986e\!-\!05$ & $3.97$ & $0.94855e\!-\!05$ & $3.79$\\
  	    & $1/32$	& $0.94001e\!-\!07$ & $4.00$ & $0.14440e\!-\!06$ & $3.99$ & $0.61378e\!-\!06$ & $3.94$\\
  	    & $1/64$	& $0.58836e\!-\!08$ & $3.99$ & $0.90435e\!-\!08$ & $3.99$ & $0.38706e\!-\!07$ & $3.98$\\
        \noalign{\smallskip}\hline\noalign{\smallskip}
    \end{tabular}
    \label{tab:density_wave_error}
    \end{center}
\end{table}

\subsection{Riemann problems}

We now consider Riemann problems with initial condition
\begin{equation*}
 {\bf u}_0(x) = \left\{ \begin{array}{rl}  {\bf u}_L, & x<0, \\ {\bf u}_R, & x>0. \end{array} \right.
\end{equation*}

The set of initial conditions is given in Table~\ref{tab:RP_IC}. Problems RP1 and RP3 are taken from \cite{chalons_coulombel08}, while problem RP4 is taken from \cite{berthelin_etal15}. Figures~\ref{fig:RP1} to \ref{fig:RP5} compare the numerical solution in symbols with the exact solution in lines.

\begin{table}
     \begin{center}
     \caption{Initial conditions and physical parameters of Riemann problems}
     \begin{tabular}{llcccc}
        \noalign{\smallskip}\hline\noalign{\smallskip}
 	test & description & left state ${\bf u}_L$ & right state ${\bf u}_R$ & $\kappa$ & $\gamma$ \\
        \noalign{\smallskip}\hline\noalign{\smallskip}
	RP1 & shock-shock & $(1,1)^\top$ & $(2,0.5)^\top$ & $\tfrac{(\gamma-1)^2}{4\gamma}$ & 1.6\\
	RP2 & shock-shock & $(1,2)^\top$ & $(2,1)^\top$ & $\tfrac{(\gamma-1)^2}{4\gamma}$ & 1.6 \\
	RP3 & rarefaction-shock & $(1,-0.5)^\top$ & $(0.5,-0.5)^\top$ & $\tfrac{(\gamma-1)^2}{4\gamma}$ & 1.6 \\
	RP4 & rarefaction-rarefaction & $(1,-5)^\top$ & $(1,5)^\top$ & $1$ & 1.4 \\
        \noalign{\smallskip}\hline\noalign{\smallskip}
    \end{tabular}
     \label{tab:RP_IC}
    \end{center}
\end{table}

For RP1, RP2 and RP3, results are qualitatively similar. The shock waves are well captured and the increase in the discretization order has a clear positive effect on the approximation of the rarefaction waves in RP3. We observe some spurious oscillations of low amplitude in the neighborhood of strong shocks with the highest discretization order $p=3$ as visible in the density distributions of RP1 and RP2. The solution for RP4 is made of two symmetric rarefaction waves with formation of near-vacuum in the intermediate region. The positivity limiter is successful to keep robustness of the computation and increasing $p$ reduces the diffusion at the tail of the waves as expected. However, the entropy limiter alters the solution at the head of the rarefaction waves for $p=3$. We attribute this effect to the fact that the solution is not smooth in this region, the limiters keeping accuracy for smooth solutions only.

\begin{figure}
\begin{center}
\subfigure{\epsfig{figure=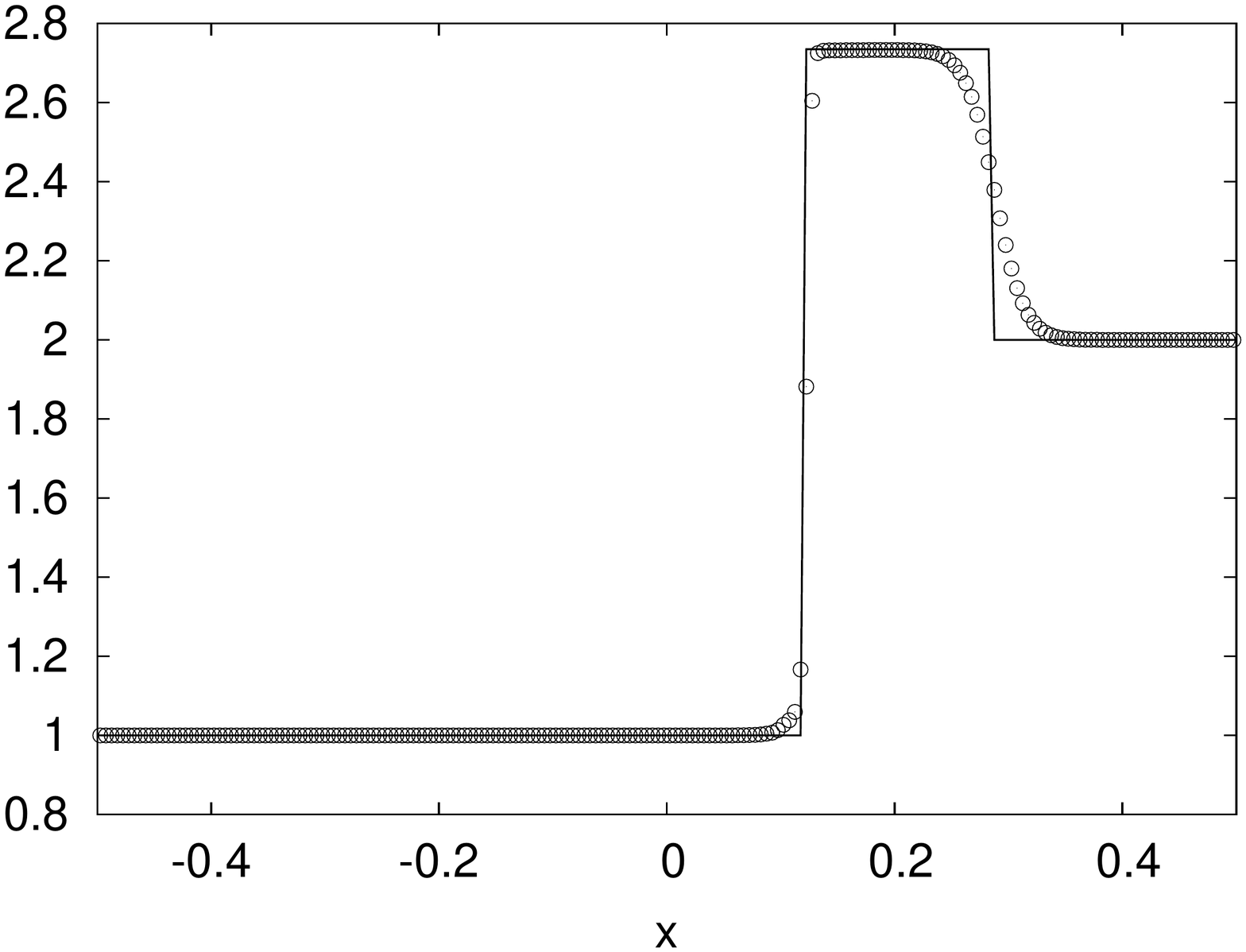,width=6.0cm}}
\subfigure{\epsfig{figure=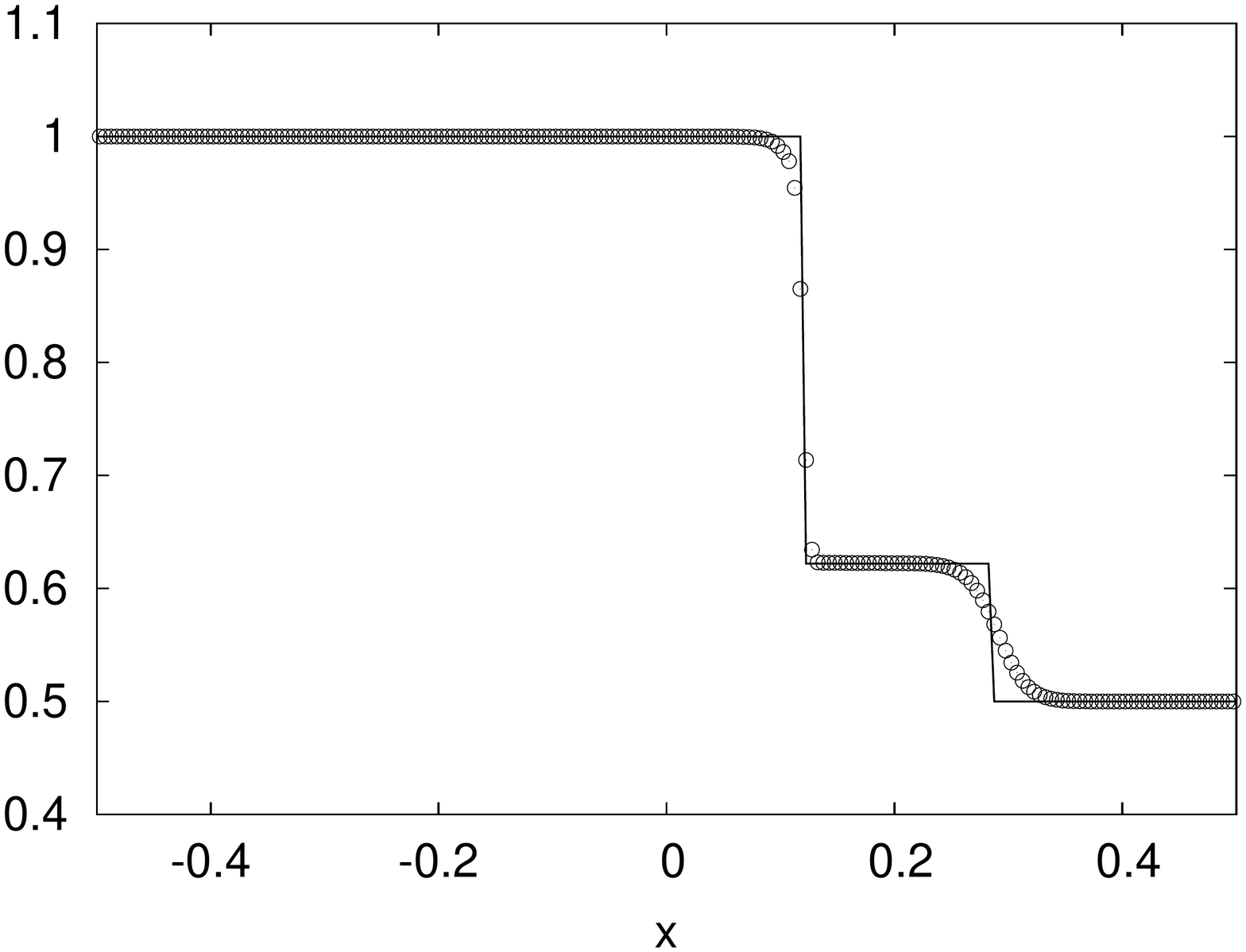  ,width=6.0cm}} \\ \vspace{-1cm}
\subfigure{\epsfig{figure=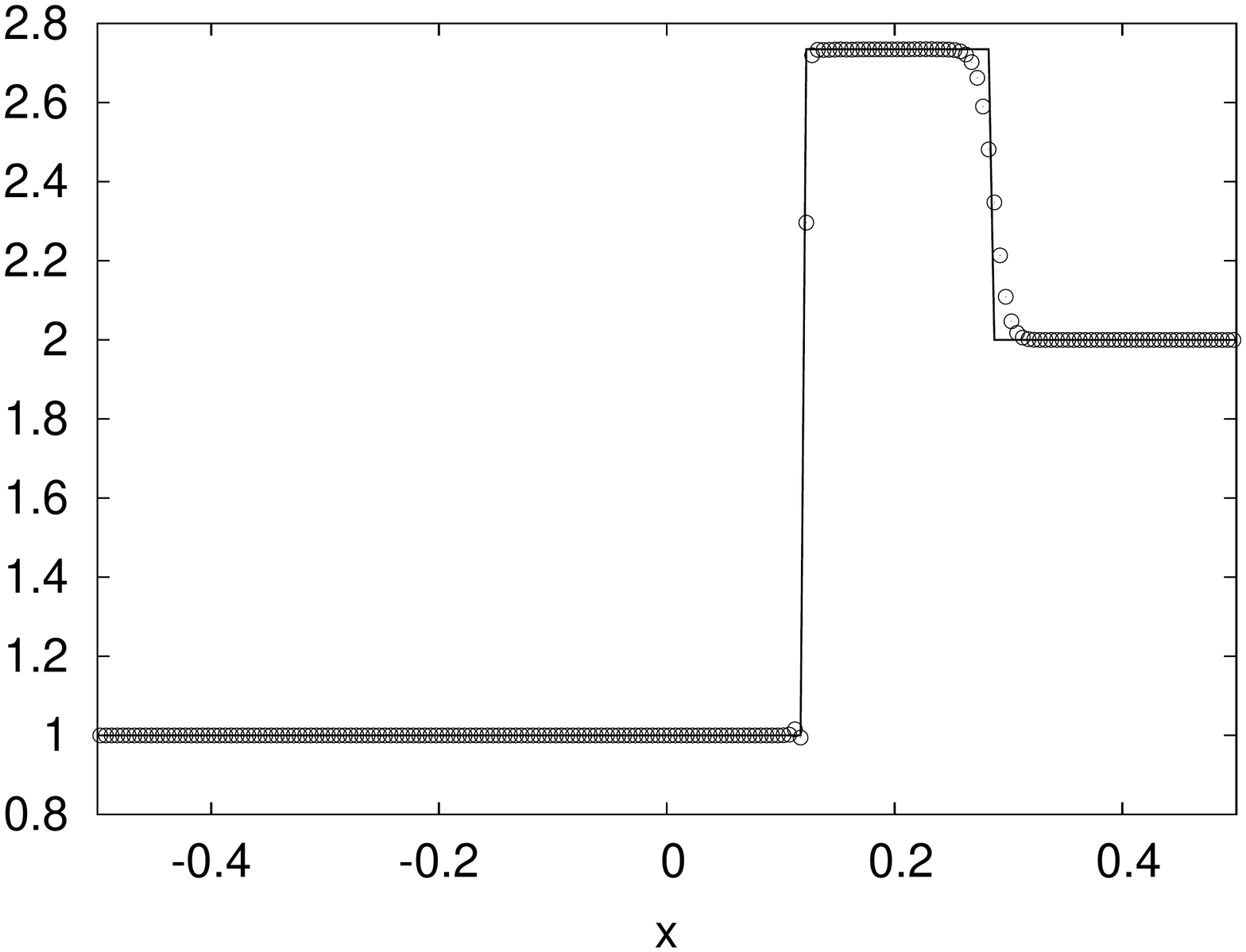,width=6.0cm}}
\subfigure{\epsfig{figure=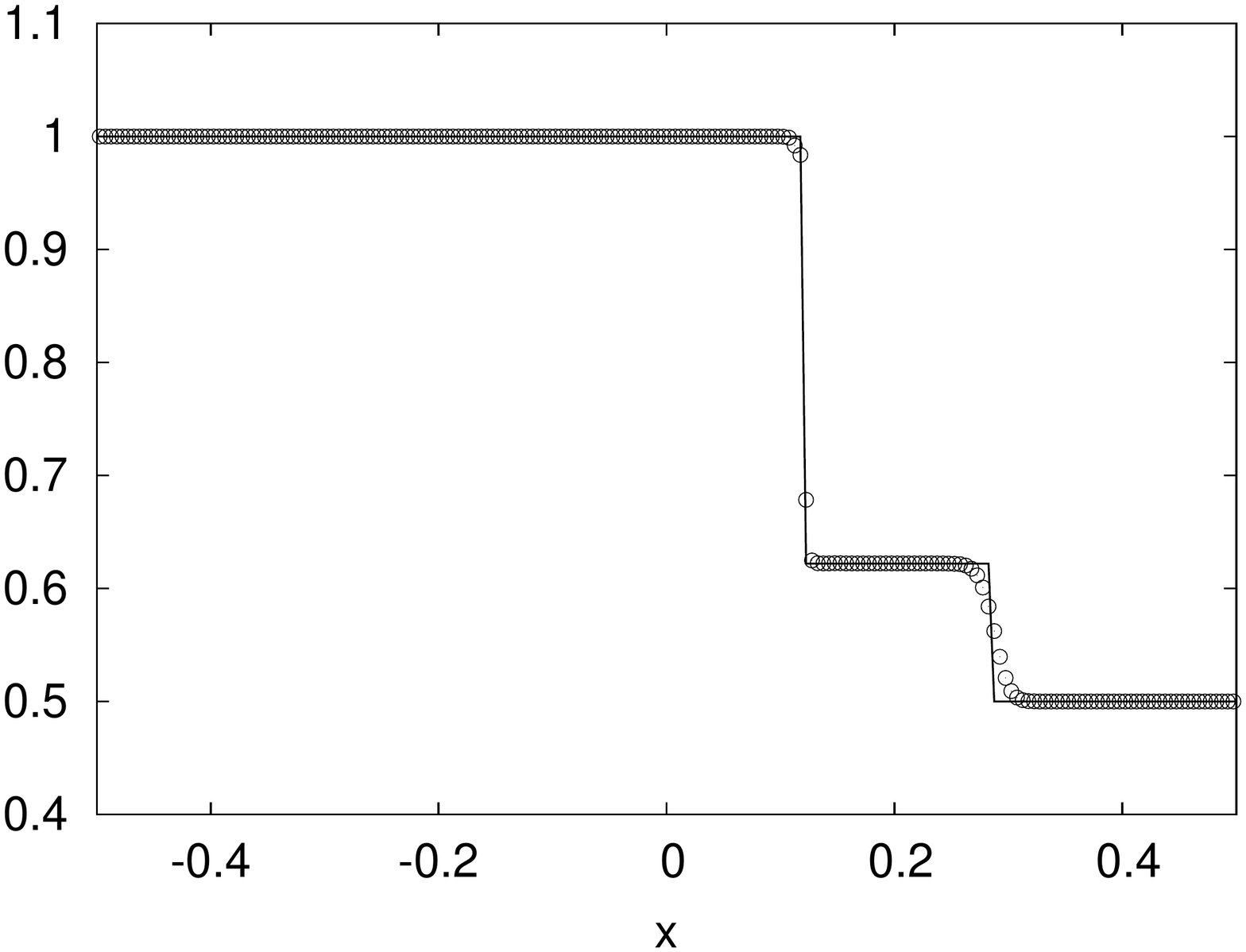  ,width=6.0cm}} \\ \vspace{-1cm}
\subfigure{\epsfig{figure=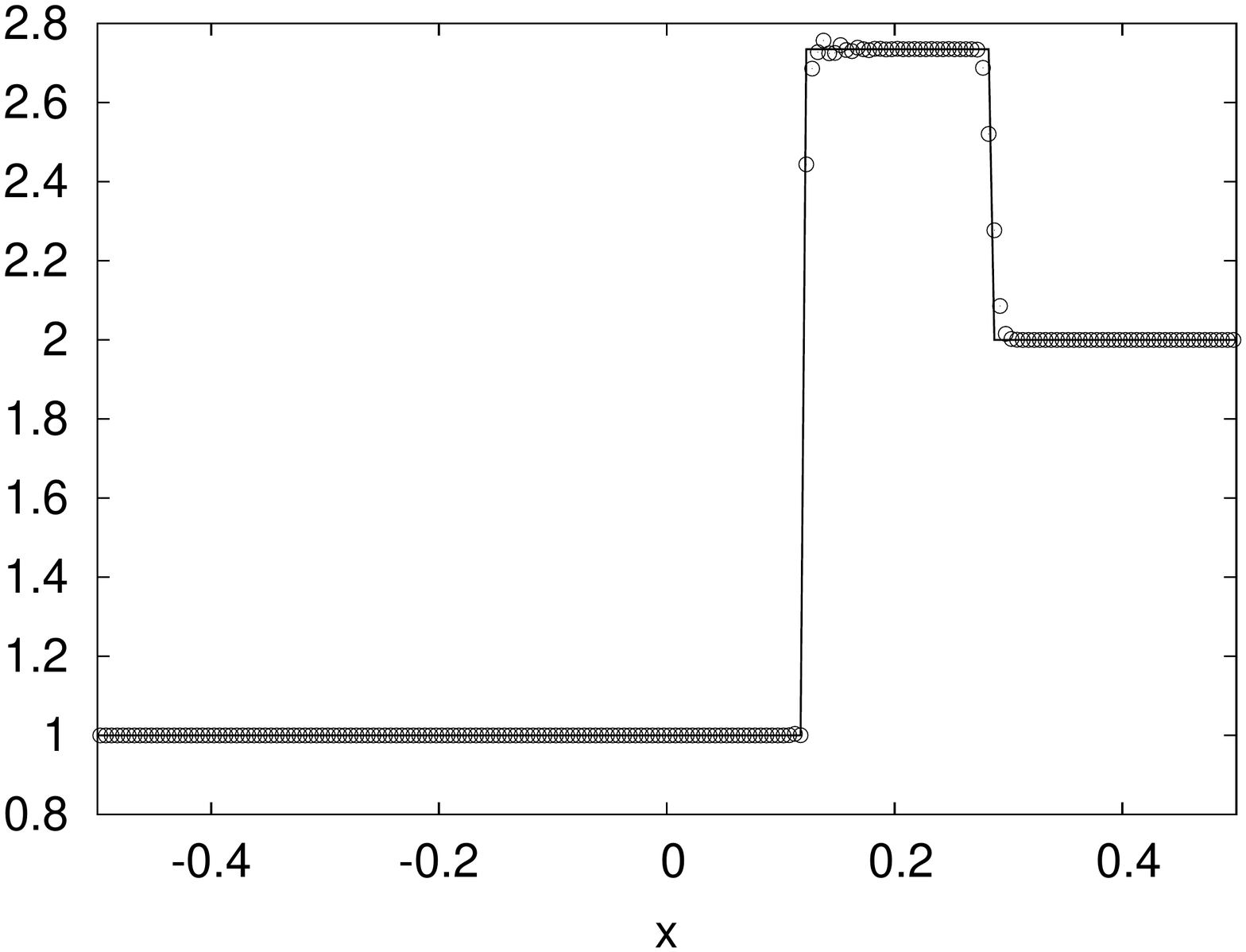,width=6.0cm}}
\subfigure{\epsfig{figure=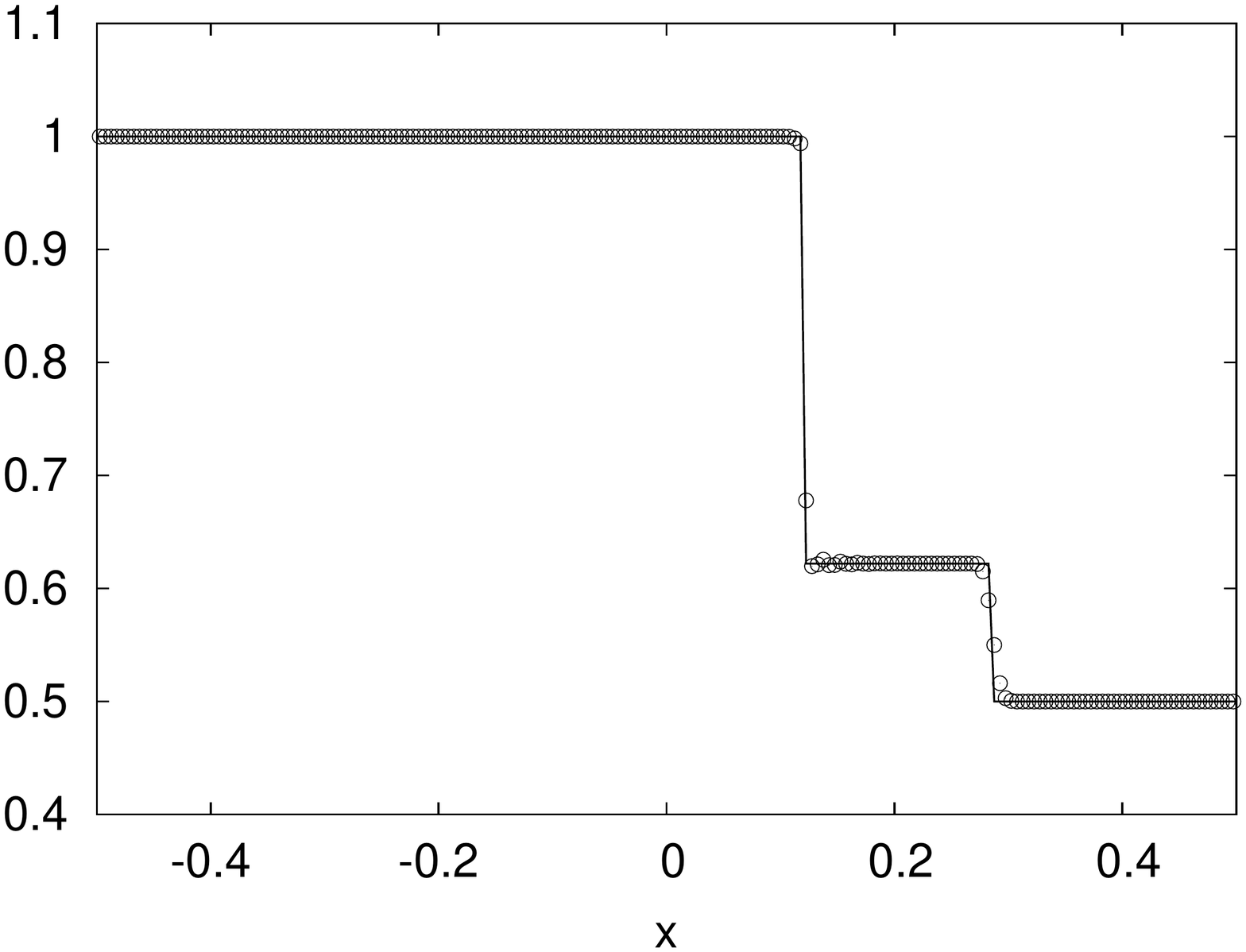  ,width=6.0cm}}
\caption{RP1: numerical solution for density (left) and velocity (right) at time $t=0.3$ for different polynomial degrees $p=1$ to $p=3$ from top to bottom, $h=\tfrac{1}{200}$.}
\label{fig:RP1}
\end{center}
\end{figure}

\begin{figure}
\begin{center}
\subfigure{\epsfig{figure=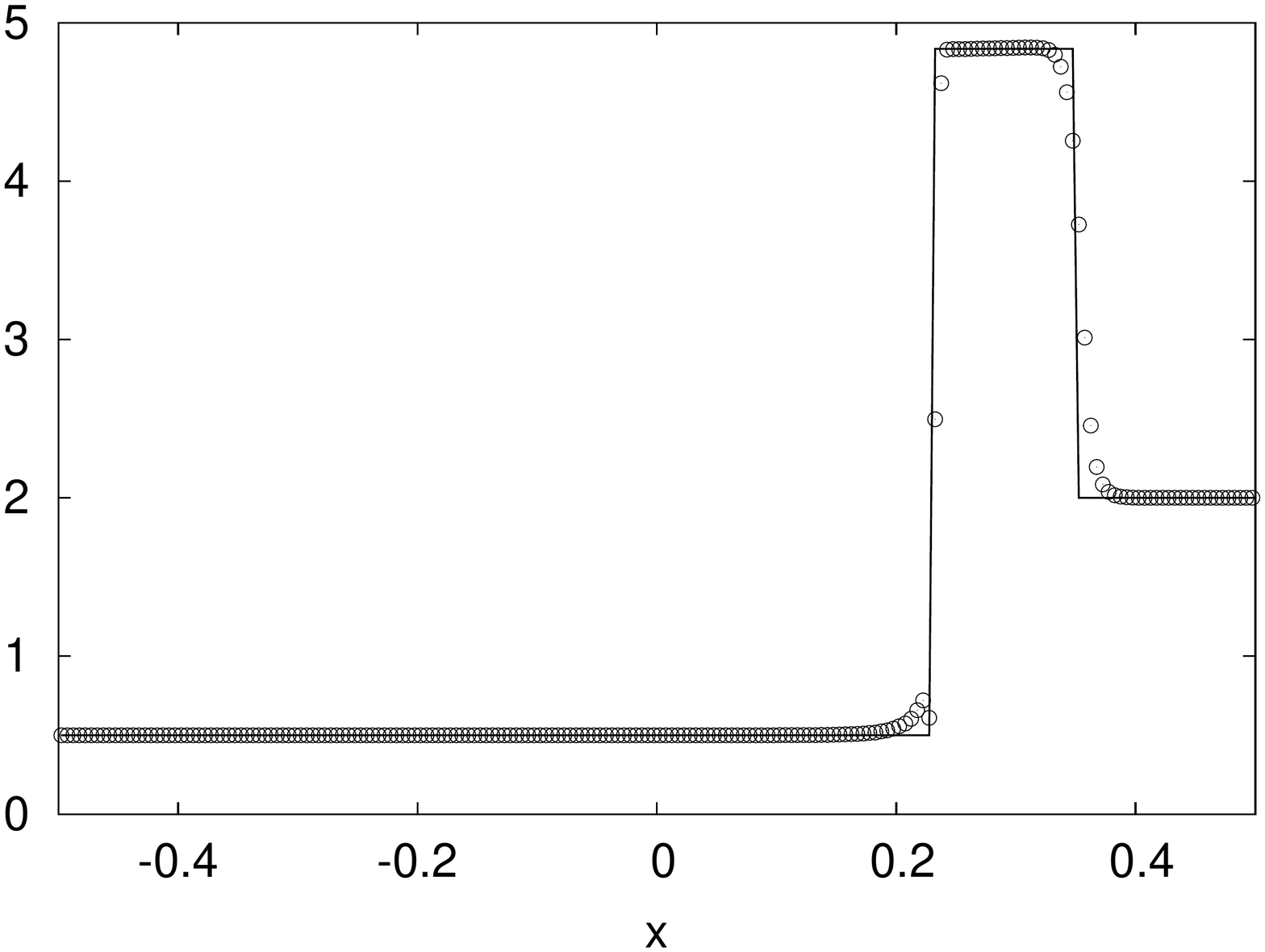,width=6cm}}
\subfigure{\epsfig{figure=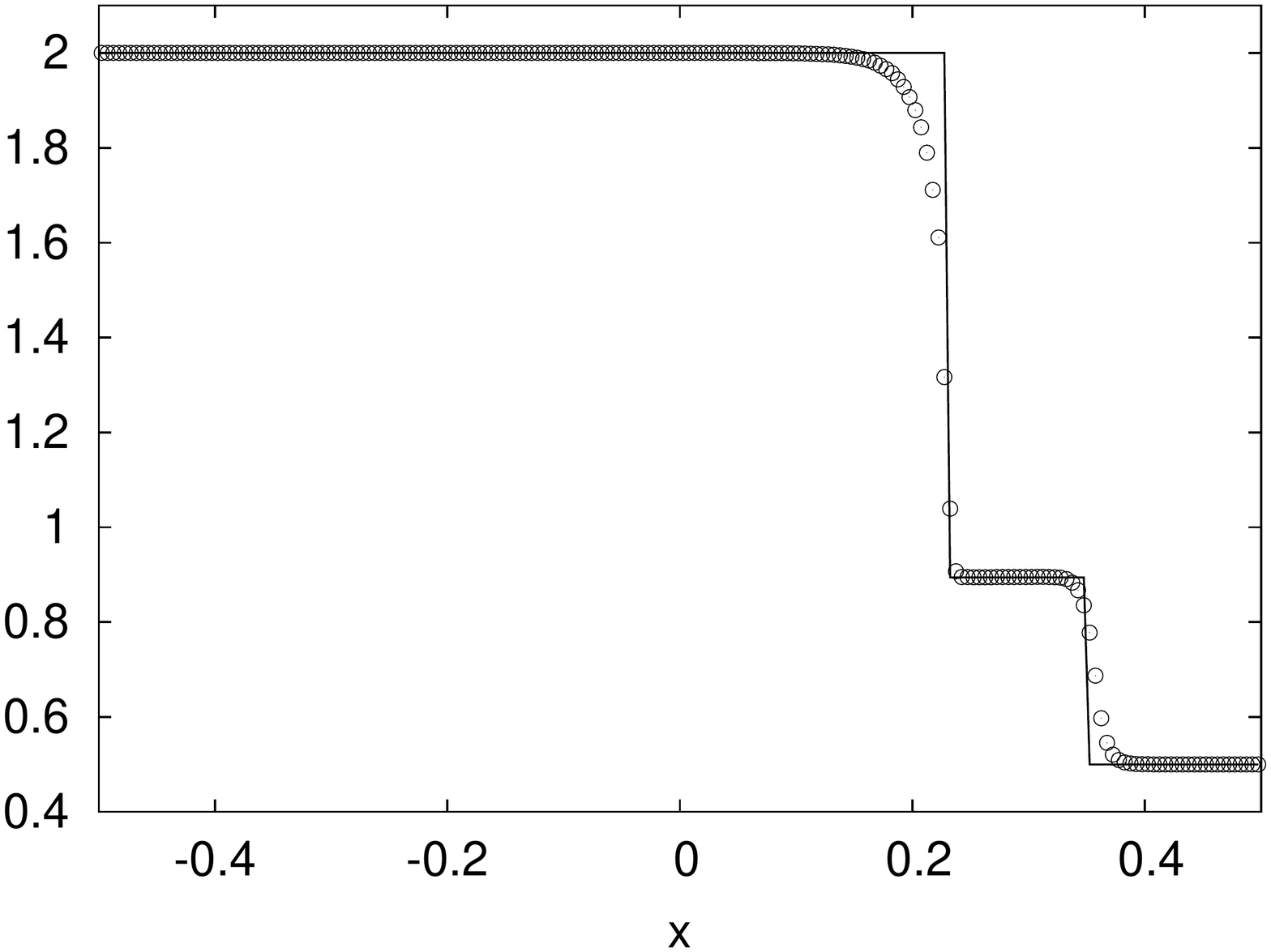  ,width=6cm}} \\ \vspace{-1cm}
\subfigure{\epsfig{figure=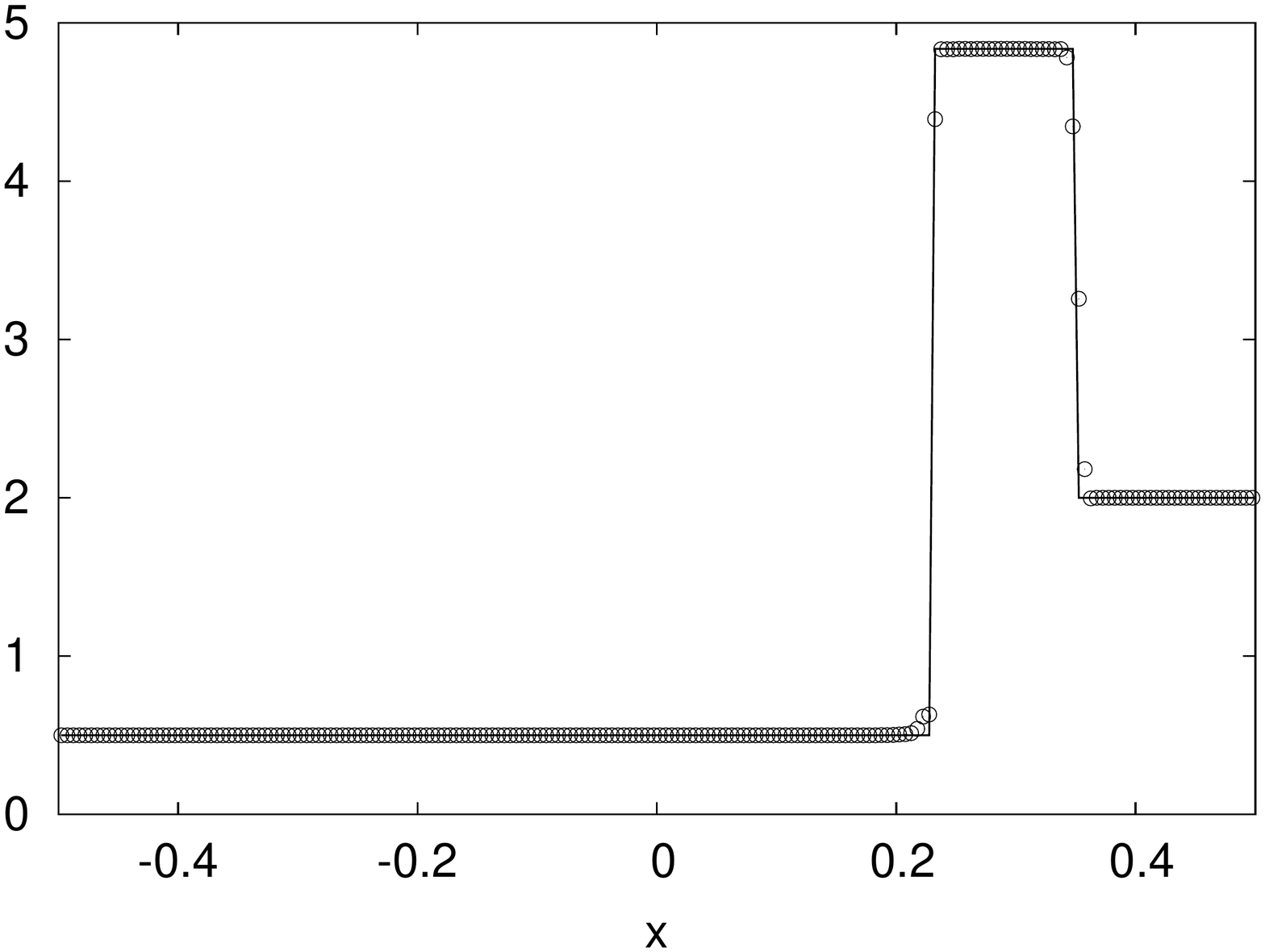,width=6cm}}
\subfigure{\epsfig{figure=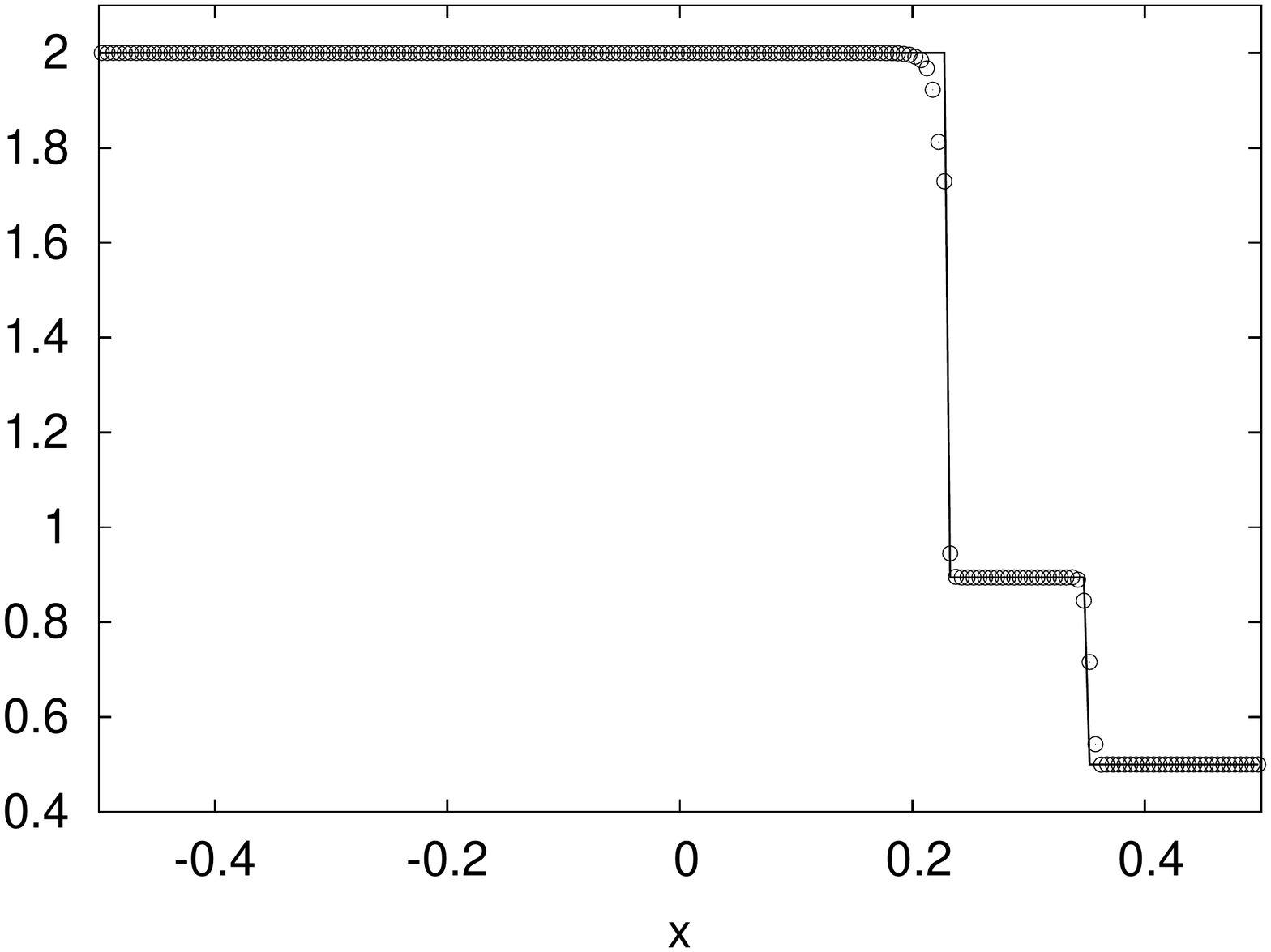  ,width=6cm}} \\ \vspace{-1cm}
\subfigure{\epsfig{figure=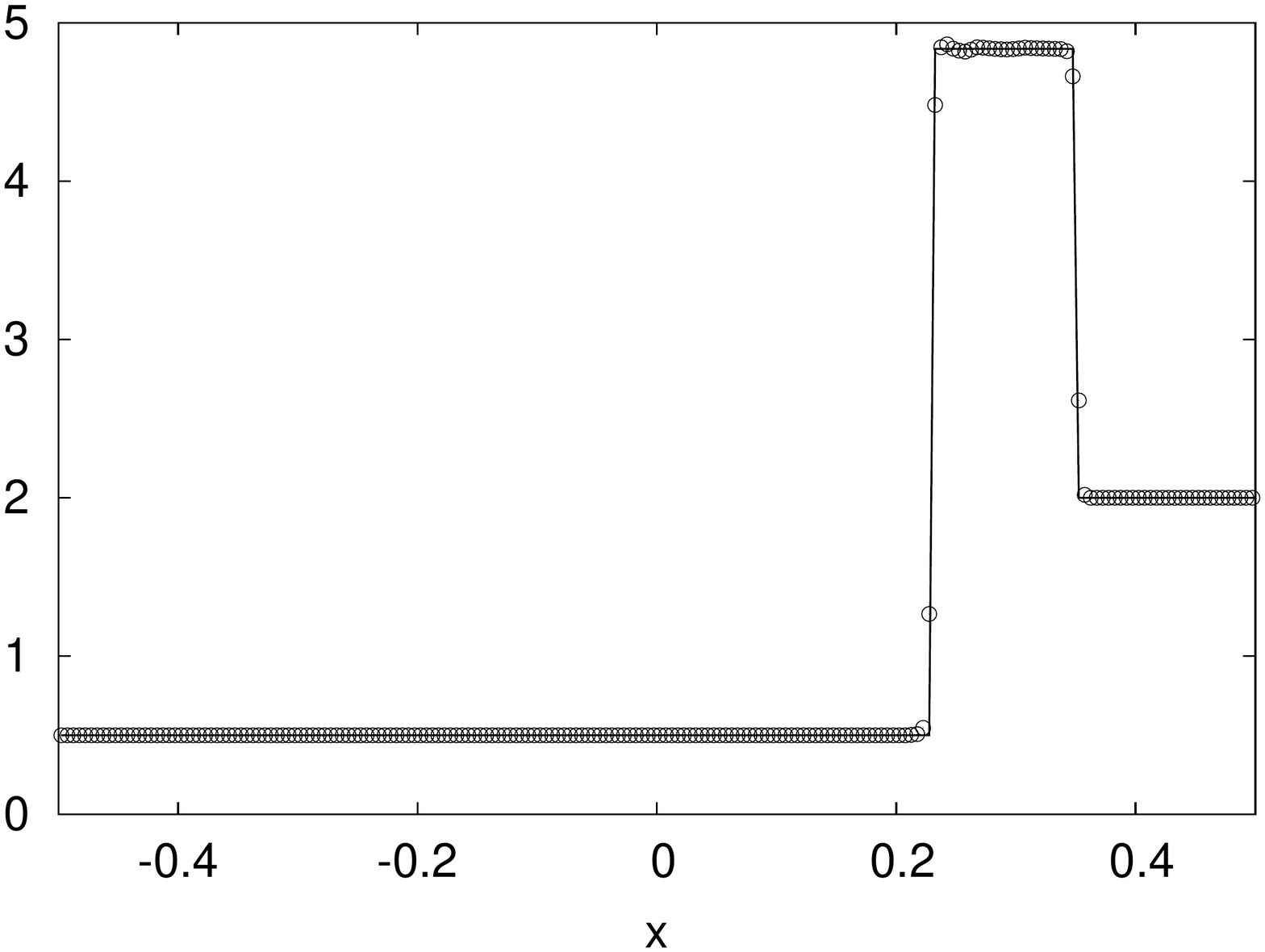,width=6cm}}
\subfigure{\epsfig{figure=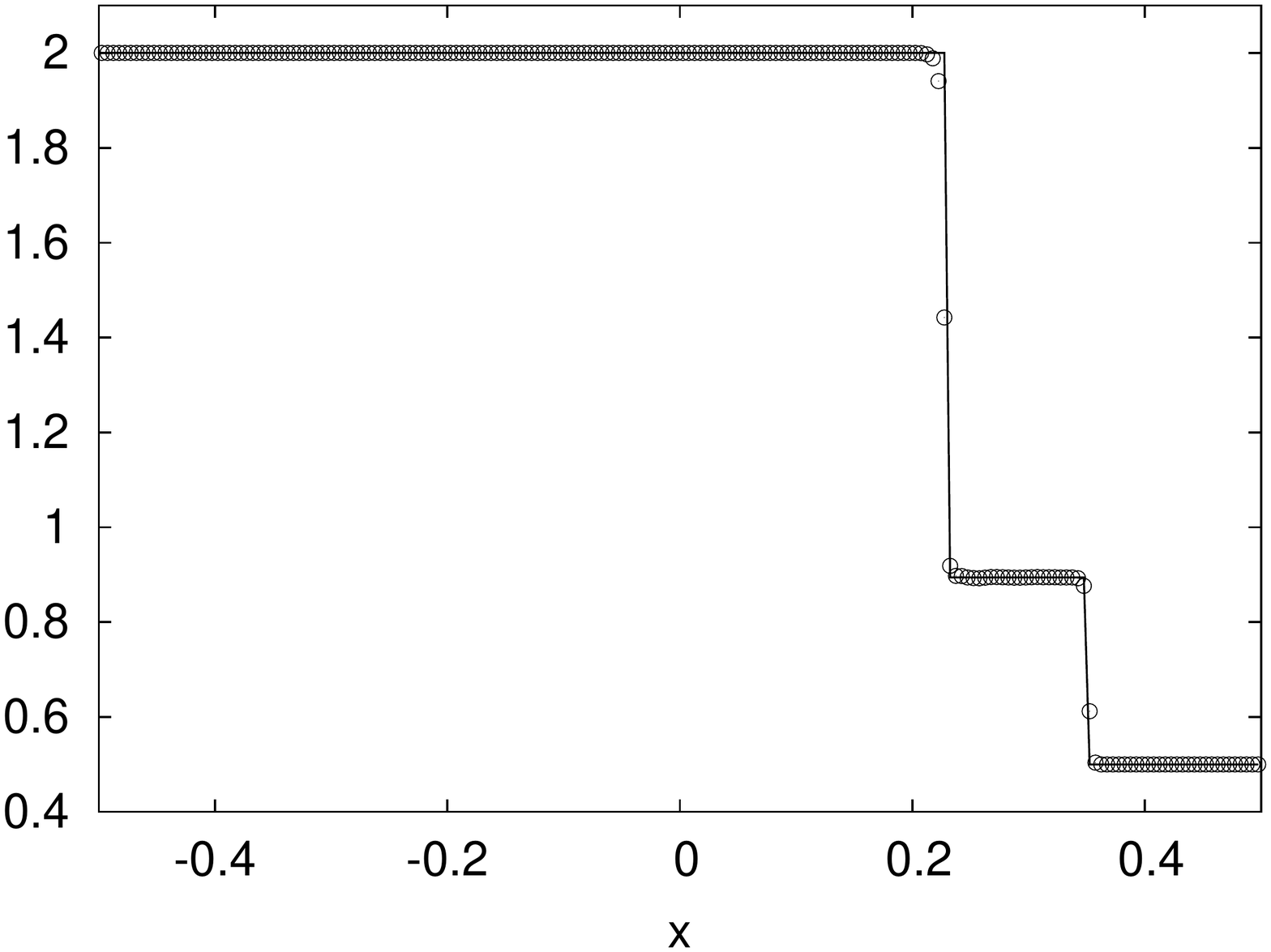  ,width=6cm}}
\caption{RP2: numerical solution for density (left) and velocity (right) at time $t=0.3$ for different polynomial degrees $p=1$ to $p=3$ from top to bottom, $h=\tfrac{1}{200}$.}
\label{fig:RP2}
\end{center}
\end{figure}

\begin{figure}
\begin{center}
\subfigure{\epsfig{figure=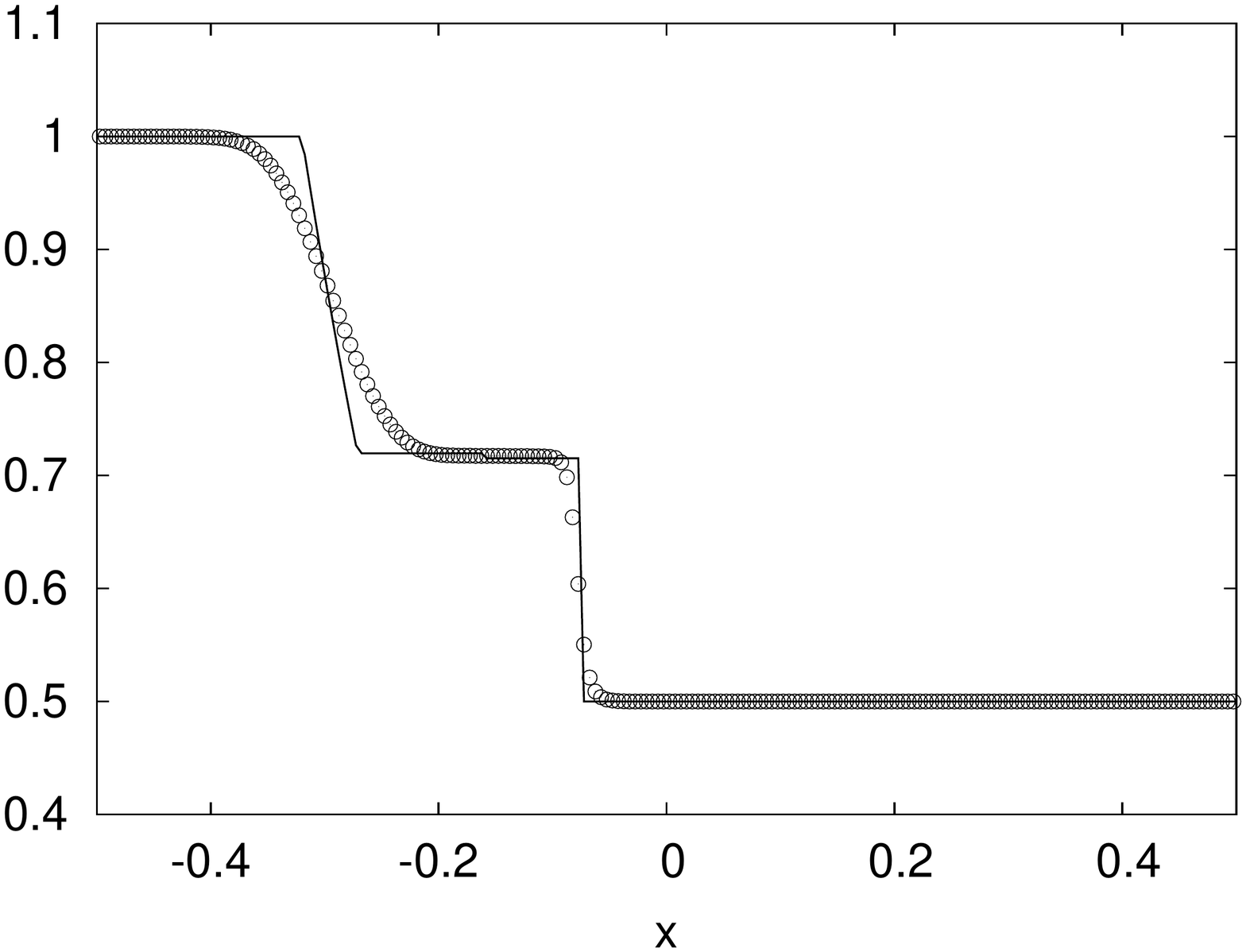,width=6cm}}
\subfigure{\epsfig{figure=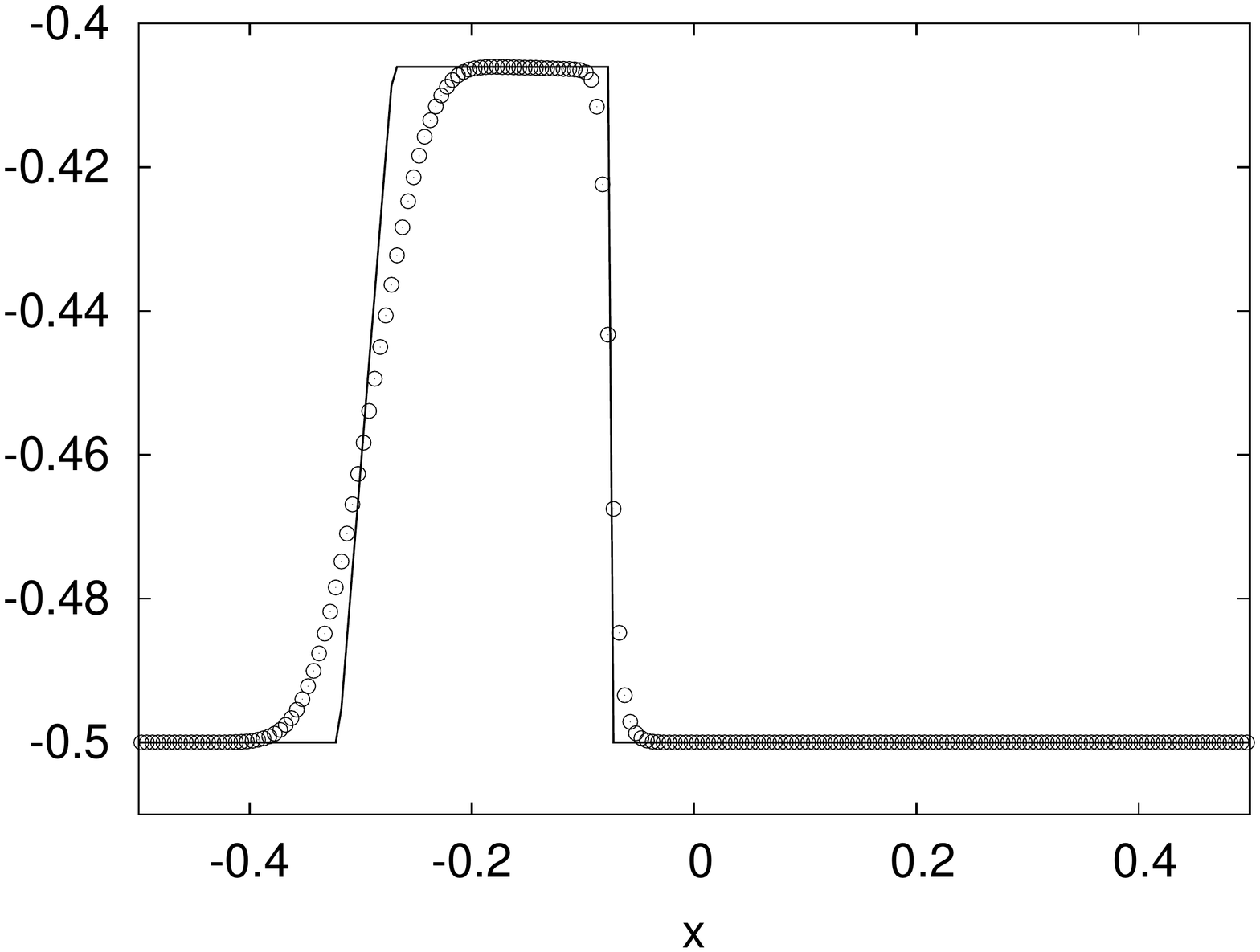  ,width=6cm}} \\ \vspace{-1cm}
\subfigure{\epsfig{figure=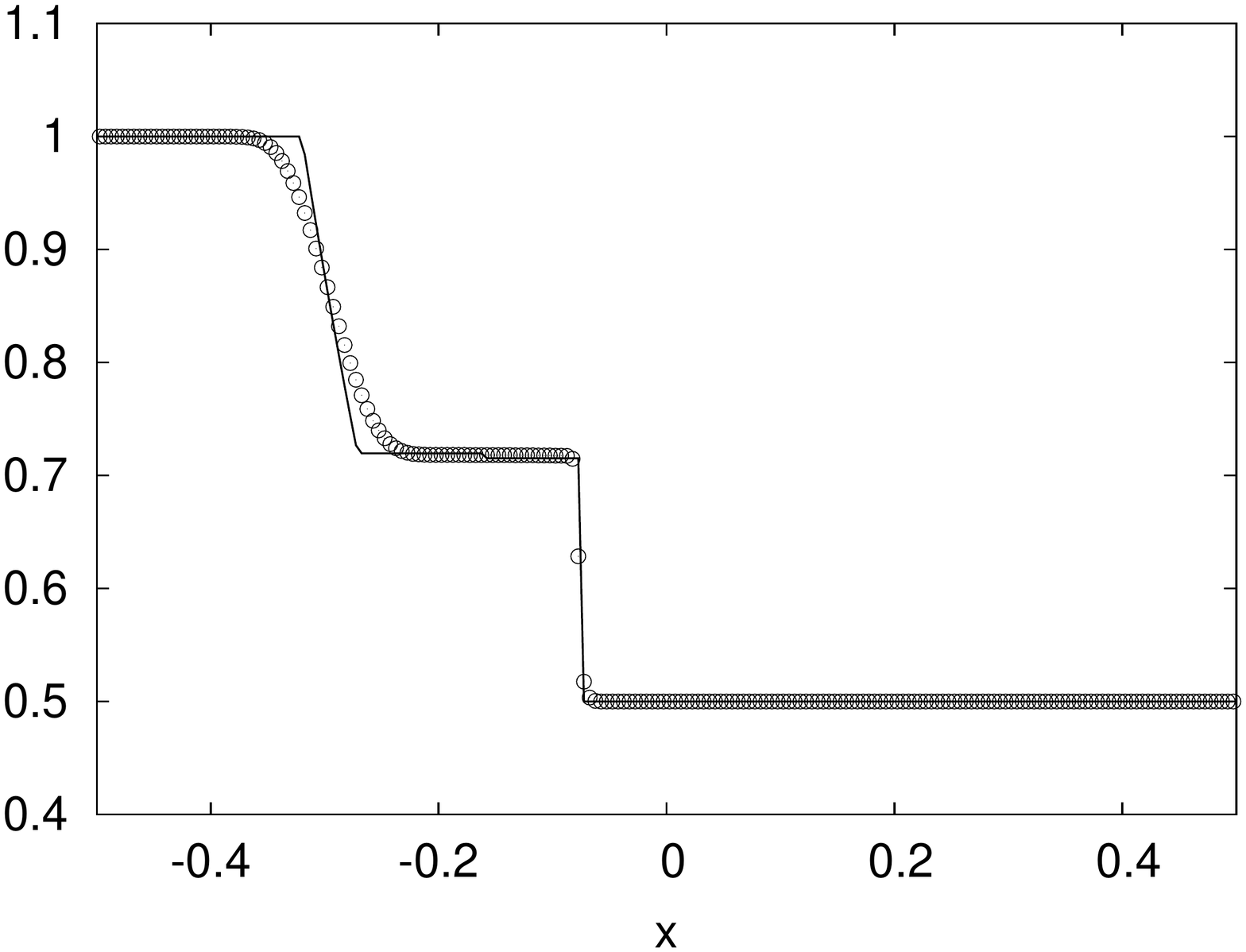,width=6cm}}
\subfigure{\epsfig{figure=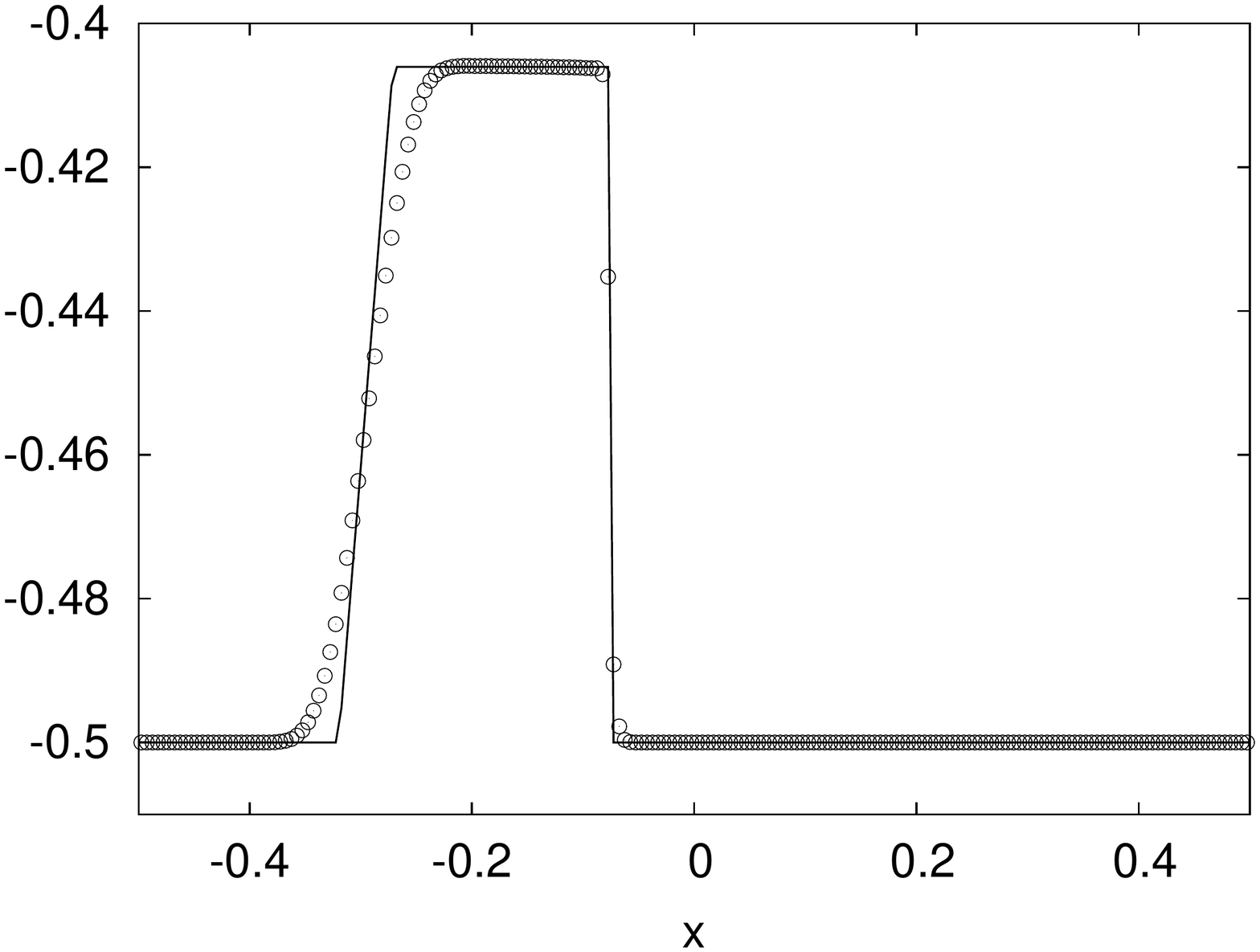  ,width=6cm}} \\ \vspace{-1cm}
\subfigure{\epsfig{figure=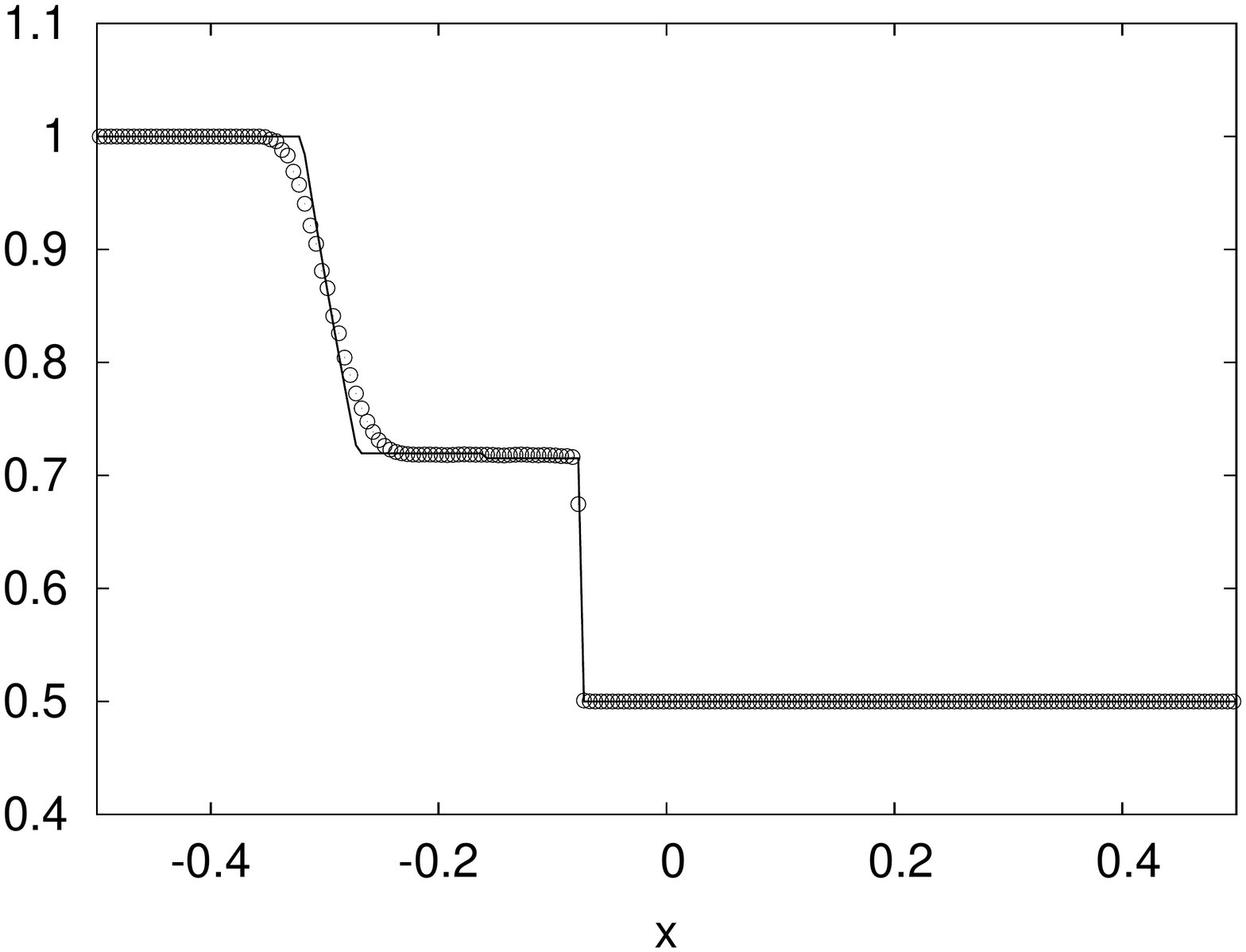,width=6cm}}
\subfigure{\epsfig{figure=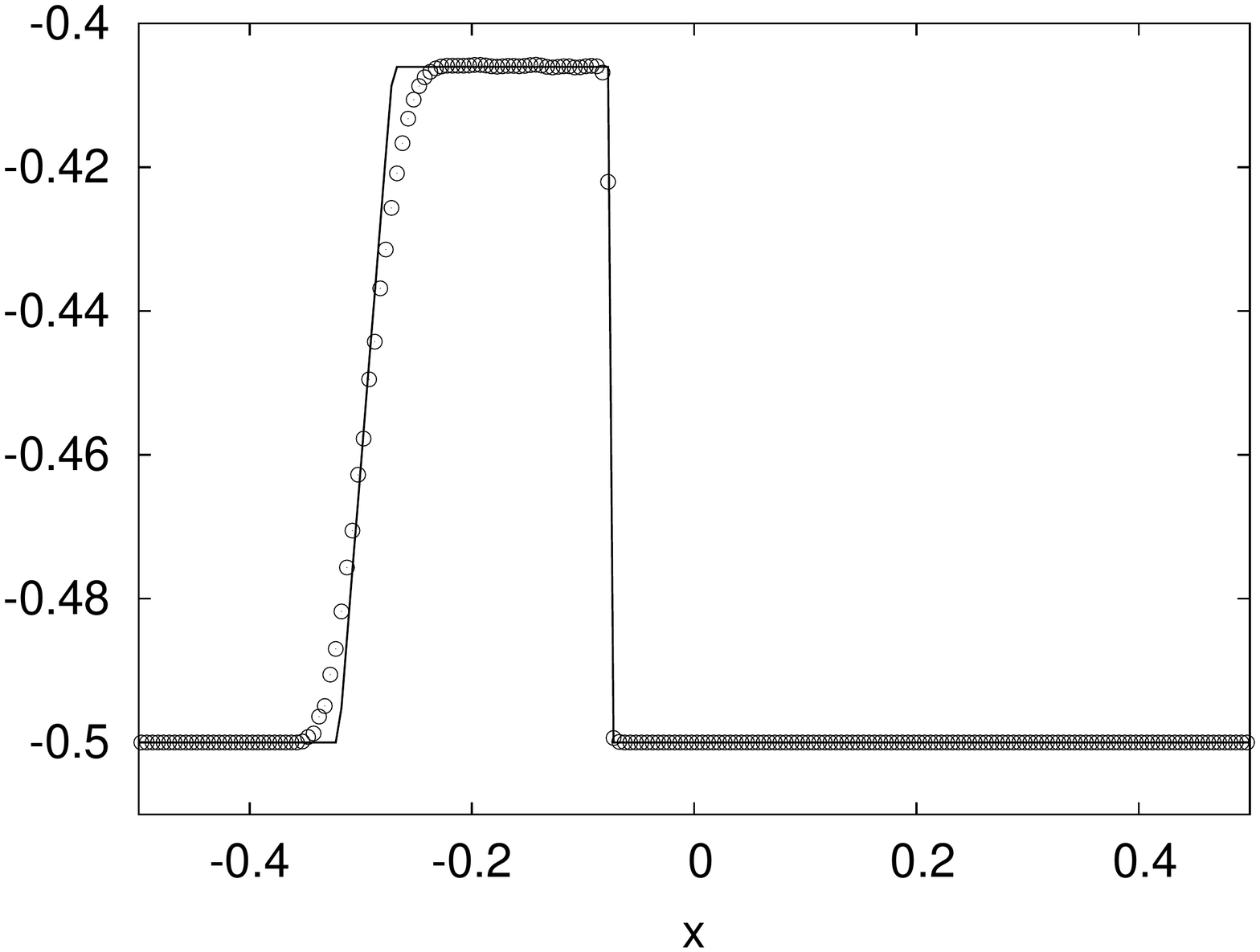  ,width=6cm}}
\caption{RP3: numerical solution for density (left) and velocity (right) at time $t=0.4$ for different polynomial degrees $p=1$ to $p=3$ from top to bottom, $h=\tfrac{1}{200}$.}
\label{fig:RP3}
\end{center}
\end{figure}

\begin{figure}
\begin{center}
\subfigure{\epsfig{figure=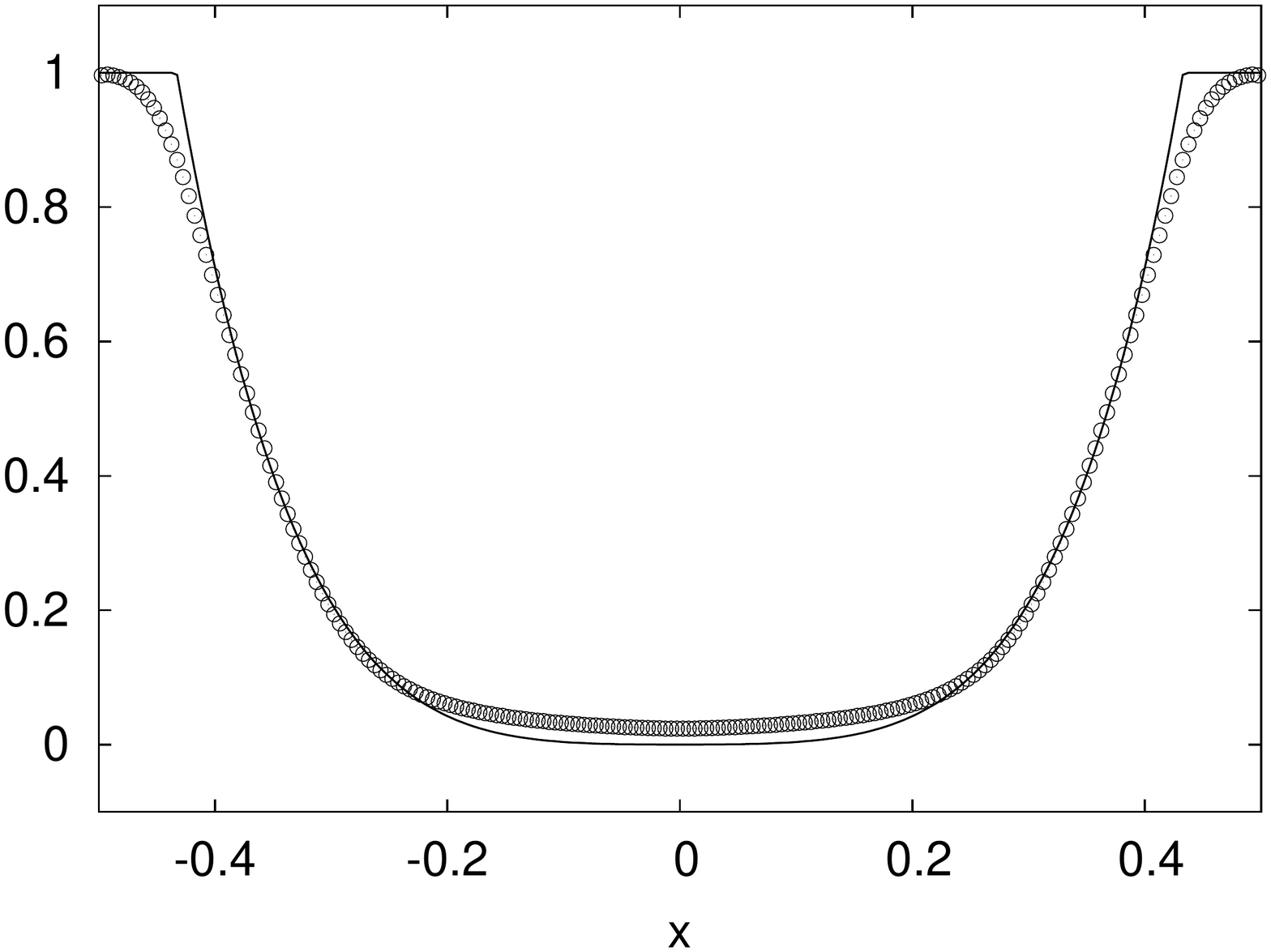,width=6cm}}
\subfigure{\epsfig{figure=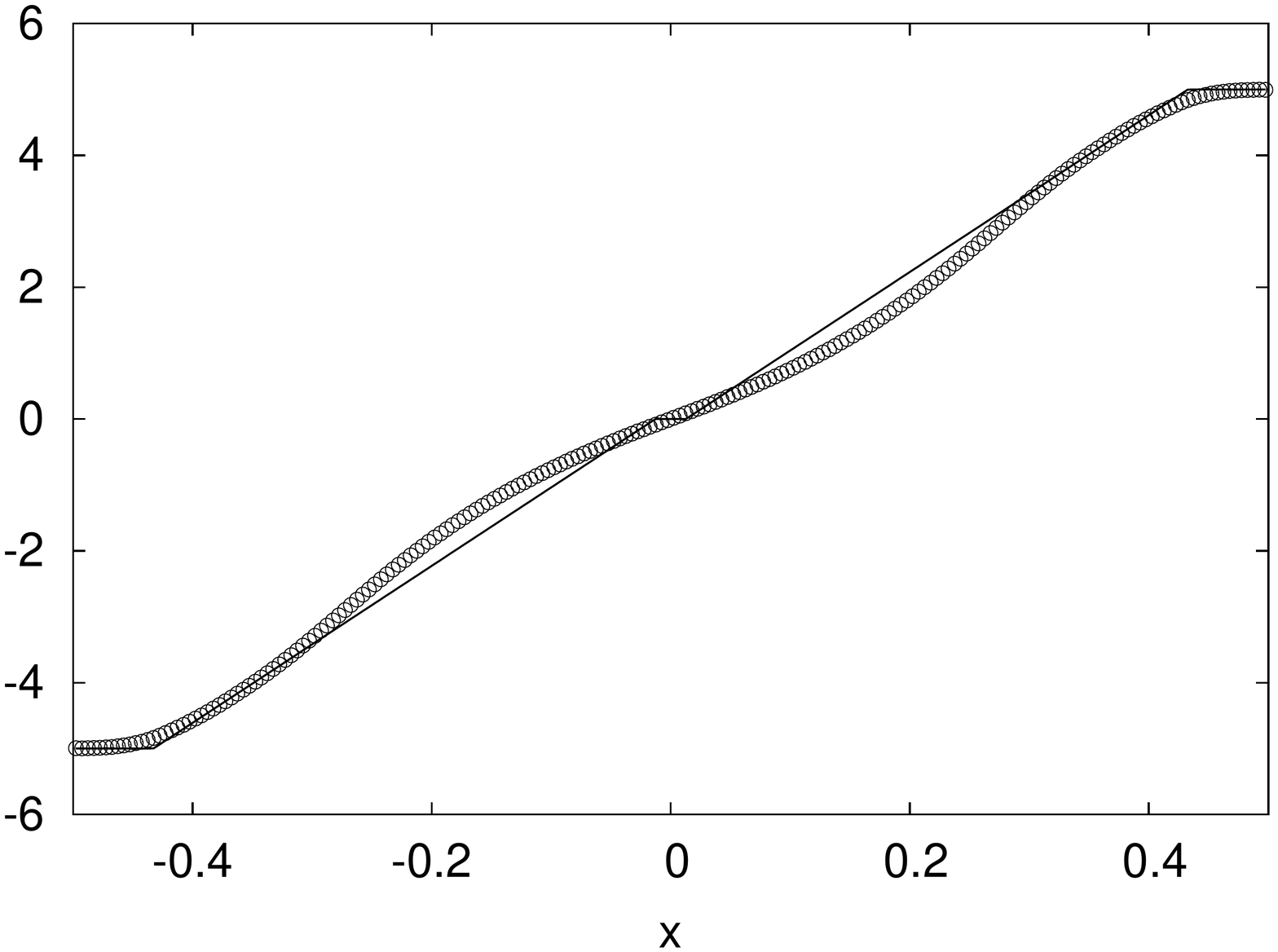  ,width=6cm}} \\ \vspace{-1cm}
\subfigure{\epsfig{figure=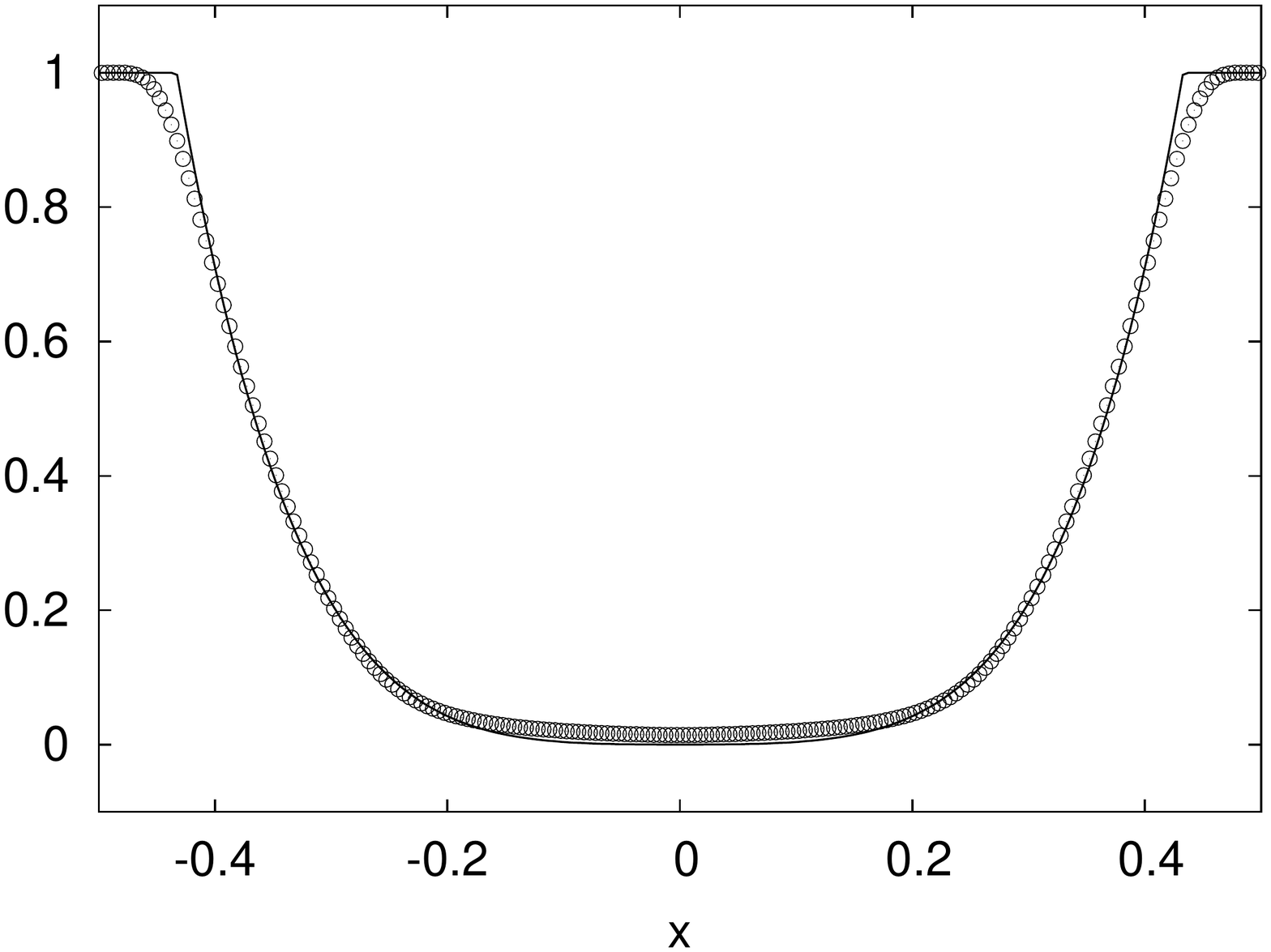,width=6cm}}
\subfigure{\epsfig{figure=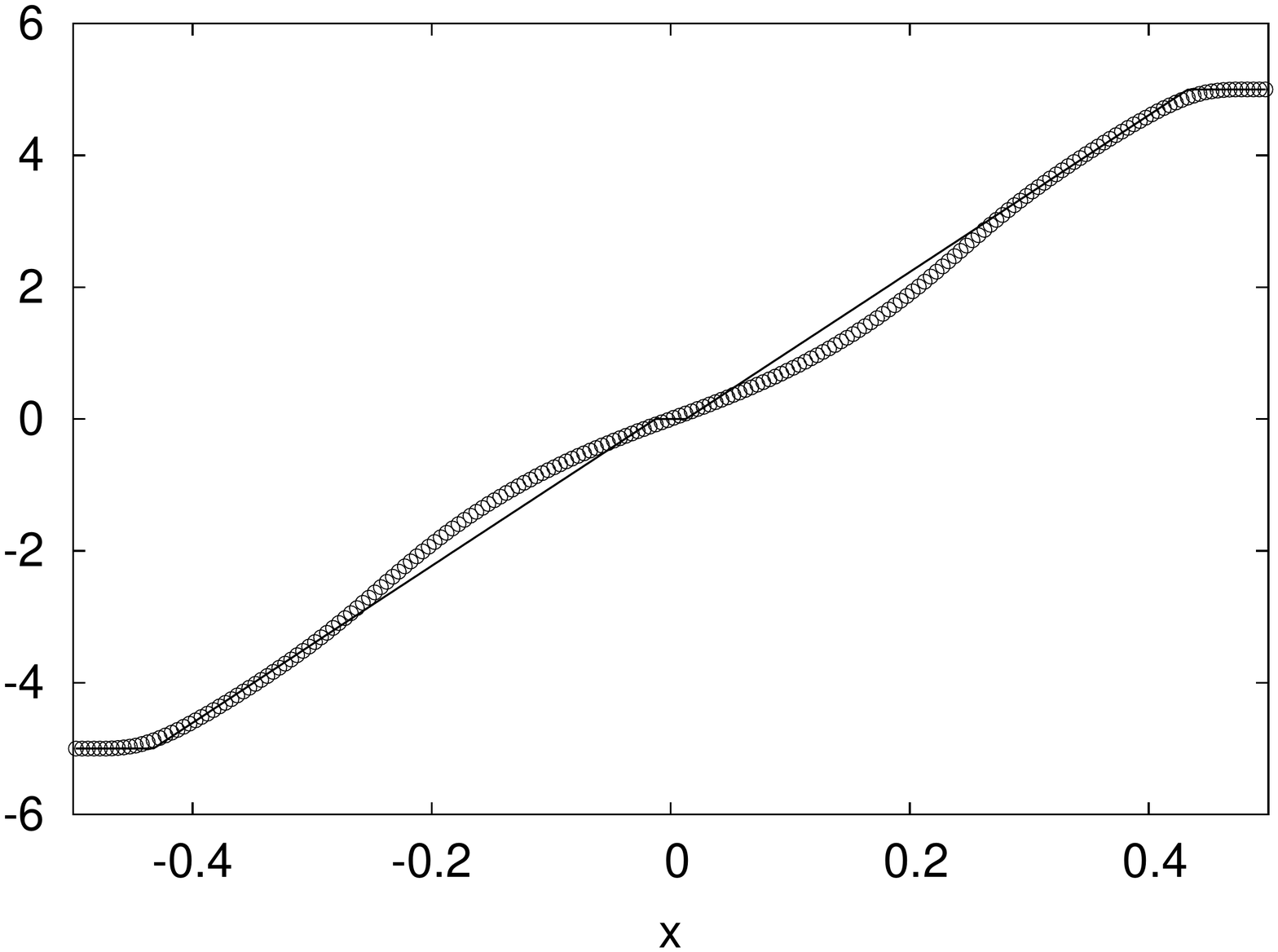  ,width=6cm}} \\ \vspace{-1cm}
\subfigure{\epsfig{figure=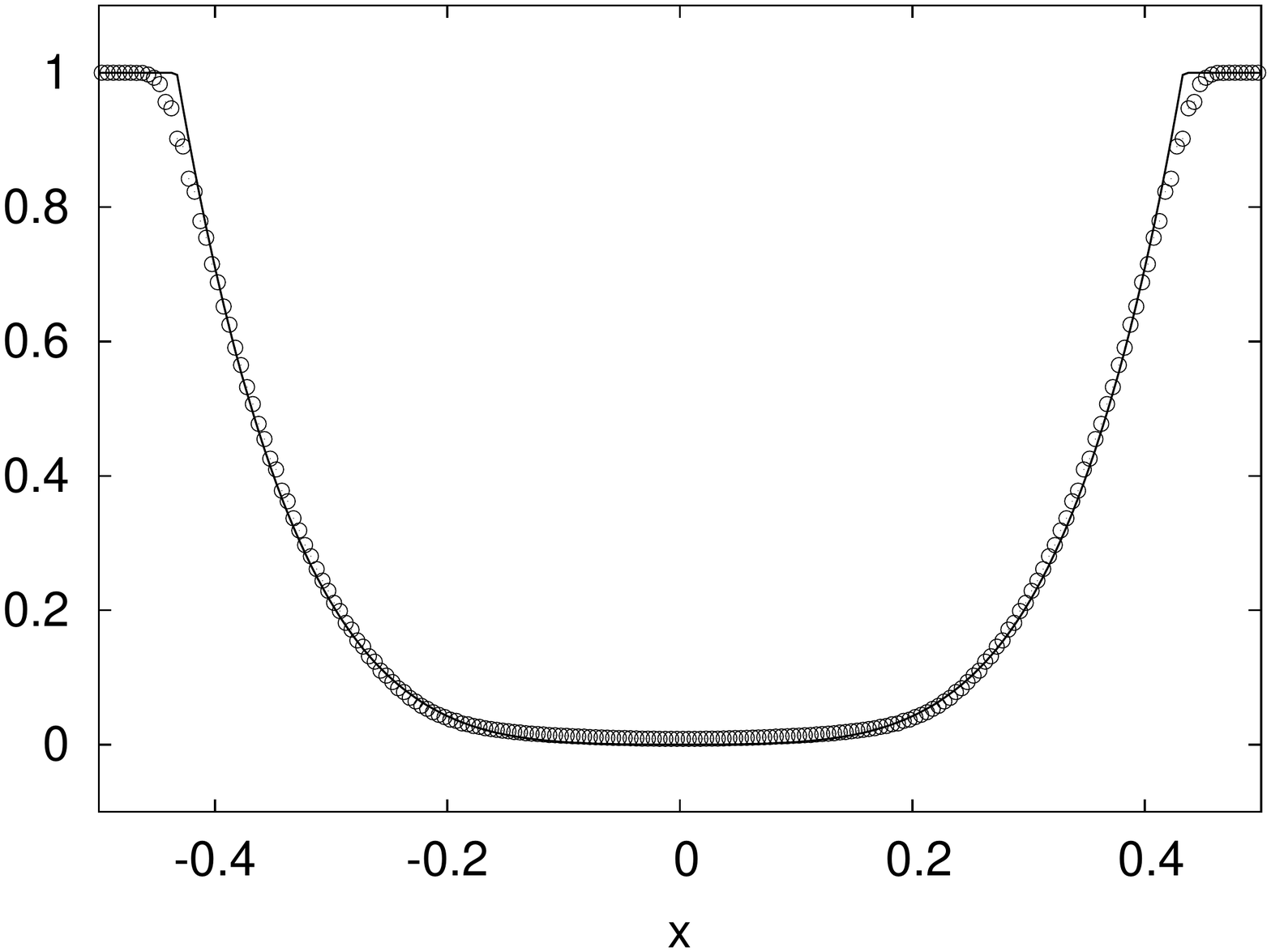,width=6cm}}
\subfigure{\epsfig{figure=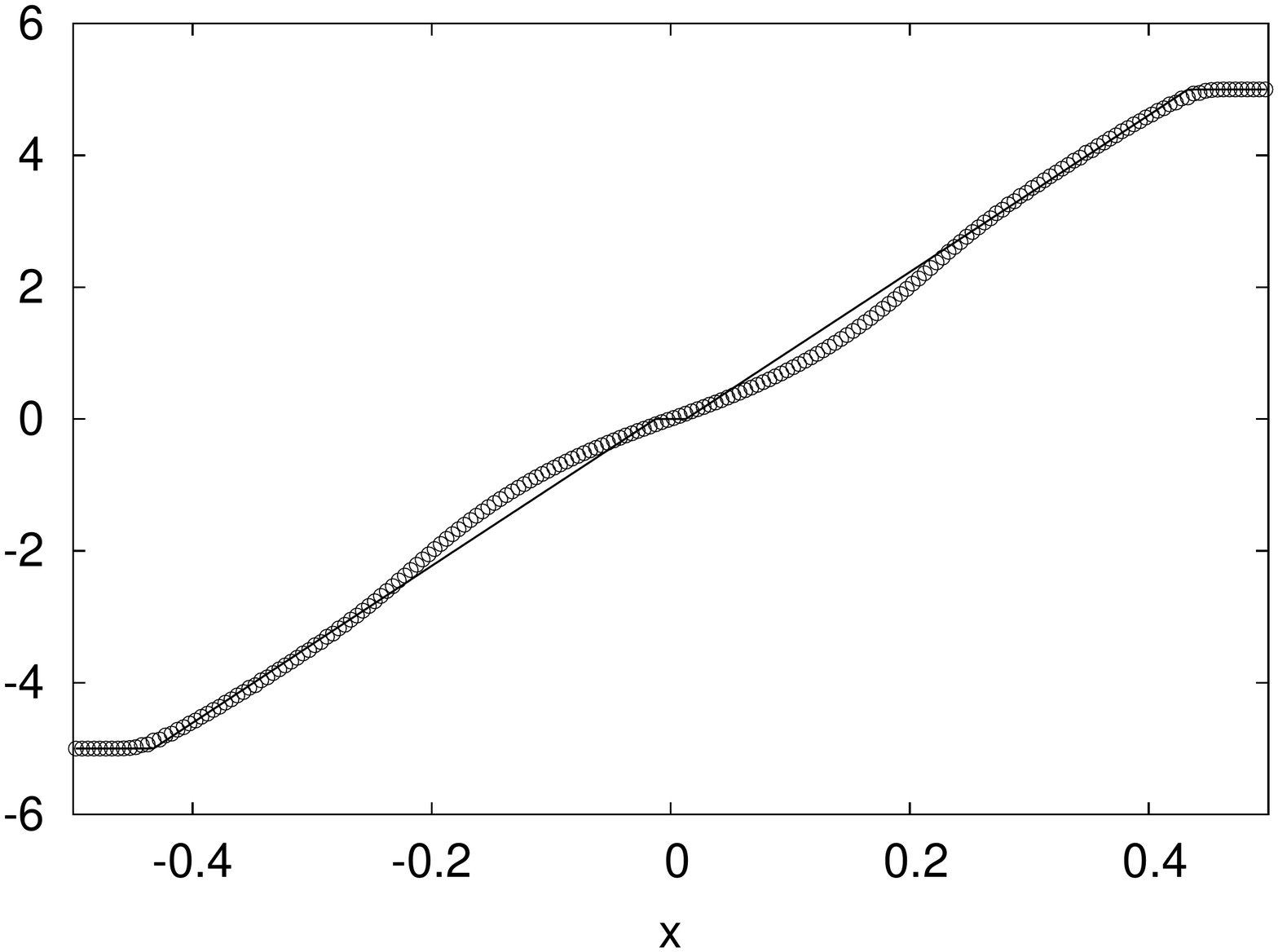  ,width=6cm}}
\caption{RP4: numerical solution for density (left) and velocity (right) at time $t=0.07$ for different polynomial degrees $p=1$ to $p=3$ from top to bottom, $h=\tfrac{1}{200}$.}
\label{fig:RP5}
\end{center}
\end{figure}

%
%%%
%
\section{Concluding remarks}\label{sec:conclusions}

The LPDG scheme introduced in this work is based on the Lagrange-projection like scheme from \cite{coquel_etal_10} derived in the context of a first-order finite volume formulation of the Euler equations. This method is here extended to high-order by using a DG method of arbitrary order for the space discretization and associated to a first-order implicit-explicit time discretization of acoustic and transport operators, respectively. Considering the isentropic Euler equations, \textit{a priori} conditions on the time step and on the numerical parameter imposing the subcharacteristic condition are derived in order to guaranty positivity and entropy inequality for the mean value of the numerical solution in each mesh element. {\it A posteriori} limiters similar to those introduced in \cite{zhang_shu_10a,zhang_shu_10b} are then used to extend these properties to nodal values within elements. Strong-stability preserving Runge-Kutta schemes are applied for the time integration in order to keep positivity at any time discretization order. 

Numerical experiments in one space dimension highlight high-order approximation of smooth solutions, while the method proves to be robust in the presence of discontinuities or vacuum. Future investigations will consider the full system of gas dynamics with a general equation of state and the extension to several space dimensions.

\begin{acknowledgements}
The author would like to thank Fr\'ed\'eric Coquel from \'Ecole Polytechnique and Christophe Chalons from Universit\'e de Versailles-Saint-Quentin-en-Yvelines for valuable discussions and their constructive comments.
\end{acknowledgements}

% BibTeX users please use one of
%\bibliographystyle{spbasic}      % basic style, author-year citations
%\bibliographystyle{spmpsci}      % mathematics and physical sciences
%\bibliographystyle{spphys}       % APS-like style for physics
%\bibliography{}   % name your BibTeX data base

\begin{thebibliography}{}

\bibitem{berthelin_etal15} F. Berthelin, T. Goudon and S. Minjeaud, Kinetic schemes on staggered grids for barotropic Euler models: entropy-stability analysis, Math. Comput., 84 (2015), pp.~2221--2262.

%\bibitem{berthon_desveaux14} C. Berthon  and V. Desveaux, An entropy preserving MOOD scheme for the Euler equations, Int. J. finite volumes, 11 (2014), pp.~1--39.

\bibitem{berthon_06} C. Berthon, Robustness of MUSCL schemes for 2D unstructured meshes, J. Comput. Phys., 218 (2006), pp.~495--509.

\bibitem{bouchut_etal_96} F. Bouchut, Ch. Bourdarias and B. Perthame, A MUSCL method satisfying all the numerical entropy inequalities, Math. Comput., 65 (1996), pp.~1439--1461.

\bibitem{chalons_coquel_05} C. Chalons and F. Coquel, Navier-Stokes equations with several independent pressure laws and explicit predictor-corrector schemes, Numer. Math., 101 (2005), pp.~451--478.

\bibitem{chalons_coulombel08} C. Chalons and J. F. Coulombel, Relaxation approximation of the Euler equations, J. Math. Anal. Appl., 348 (2008), pp.~872--893.

\bibitem{chalons_etal_13} C. Chalons, M. Girardin and S. Kokh, Large time step and asymptotic preserving numerical schemes for the gas dynamics equations with source terms, SIAM J. Sci. Comput., 35 (2013), pp.~A2874--A2902.

\bibitem{cockburn-shu89} B. Cockburn and C. W. Shu, TVB Runge-Kutta local projection discontinuous Galerkin finite element method for scalar conservation laws II: general framework, Math. Comput., 52 (1989), pp.~411--435.

\bibitem{cockburn-shu01} B. Cockburn and C. W. Shu, Runge-Kutta discontinuous Galerkin methods for convection-dominated problems, J. Sci. Comput., 16 (2001), pp.~173--261.

\bibitem{coquel_etal_01} F. Coquel, E. Godlewski, A. In, B. Perthame and P. Rascle, Some new Godunov and relaxation methods for two-phase flow problems. Godunov methods, Kluwer/Plenum, New York, 2001, pp.~179--188. 

\bibitem{coquel_etal_06} F. Coquel, P. Helluy and J. Schneider, Second-order entropy diminishing scheme for the Euler equations, Int. J. Numer. Meth. Fluids, 50 (2006), pp.~1029--1061.

\bibitem{coquel_lefloch_96} F. Coquel and P. G. LeFloch, An entropy satisfying MUSCL scheme for systems of conservation laws, Numer. Math., 74 (1996), pp.~1--33.

\bibitem{coquel_etal_10} F. Coquel, Q. Long-Nguyen, M. Postel and Q. H. Tran, Entropy-satisfying relaxation method with large time-steps for Euler IBVPs, Math. Comput., 79 (2010), pp.~1493--1533.

%\bibitem{haack_etal_12} J. Haack, S. Jin, J.-G. Liu, An all-speed asymptotic-preserving method for the isentropic Euler and Navier-Stokes equations, Comm. Comput. Phys., 12 (2012), pp.~955--980.

\bibitem{Jin_Xin_95} S. Jin and Z. P. Xin, The relaxation schemes for systems of conservation laws in arbitrary space dimension, Comm. Pure Appl. Math., 48 (1995), pp.~235--276.
 
\bibitem{adigma-book10} N. Kroll, H. Bieler, H. Deconinck, V. Couaillier, H. van der Ven and K. Sorensen (eds.), ADIGMA - A european initiative on the development of adaptive higher-order variational methods for aerospace applications, Notes on Numerical Fluid Mechanics and Multidisciplinary Design, 113 (2010), Springer Verlag.

\bibitem{godlewski-raviart96} E. Godlewski and P.-A. Raviart, Numerical approximation of hyperbolic systems of conservation laws, Applied Mathematical Sciences, vol.~118, Springer-Verlag, New-York, 1996.

\bibitem{kopriva_gassner10} D. A. Kopriva and G. Gassner, On the quadrature and weak form choices in collocation type discontinuous Galerkin spectral element methods, J. Sci. Comput., 44 (2010), pp.136--155.

\bibitem{lesaint-raviart74} P. Lesaint and P.-A. Raviart, On a finite element method for solving the neutron transport equation, in Mathematical Aspects of Finite Elements in Partial Differential Equations, de Boor ed., Academic Press, New York, 1974, pp.~89--123.

\bibitem{reed-hill73} W. H. Reed and T. R. Hill, Triangular mesh methods for the neutron transport equation, Technical Report LA-UR-73-479, Los Alamos Scientific Laboratory, NM, 1973.

\bibitem{perthame_shu_96} B. Perthame and C.-W. Shu, On positivity preserving finite volume schemes for Euler equations, Numer. Math., 73 (1996), pp.~119--130.

\bibitem{qui_et-al06} J. Qiu, B.C. Khoo and C.-W. Shu, A numerical study for the performance of the Runge-Kutta discontinous Galerkin method based on different numerical fluxes, J. Comput. Phys., 26 (2006), pp.~540--565.

\bibitem{renac15a} F. Renac, Stationary discrete shock profiles for scalar conservation laws with a discontinuous Galerkin method, SIAM J. Numer. Anal., 53 (2015), pp.~1690--1715.

\bibitem{renac_etal_2012c} F. Renac, S. G\'erald, C. Marmignon and F. Coquel, Fast time implicit-explicit discontinuous Galerkin method for the compressible Navier-Stokes equations, J. Comput. Phys., 251 (2013), pp.~272--291.

\bibitem{renac_etal15} F. Renac, M. de la Llave Plata, E. Martin, J.-B. Chapelier and V. Couaillier, Aghora: A high-order DG solver for turbulent flow simulations, in N. Kroll, C. Hirsch, F. Bassi, C. Johnston and K. Hillewaert (Eds.), IDIHOM: Industrialization of High-Order Methods - A Top-Down Approach, Notes on Numerical Fluid Mechanics and Multidisciplinary Design, 128 (2015), Springer Verlag.

\bibitem{shu-osher88} C.-W. Shu and S. Osher, Efficient implementation of essentially non-oscillatory shock-capturing schemes, J. Comput. Phys., 77 (1988), pp.~439--471.

\bibitem{spiteri_ruuth02} R. J. Spiteri and S. J. Ruuth, A new class of optimal high-order strong-stability-preserving time discretization methods, SIAM J. Numer. Anal., 40 (2002), pp.~469--491.

\bibitem{zhang_shu_10a} X. Zhang and C.-W. Shu, On maximum-principle-satisfying high order schemes for scalar conservation laws, J. Comput. Phys., 229 (2010), pp.~3091--3120.

\bibitem{zhang_shu_10b} X. Zhang and C.-W. Shu, On positivity-preserving high order discontinuous Galerkin schemes for compressible Euler equations on rectangular meshes, J. Comput. Phys., 229 (2010), pp.~8918--8934.

\end{thebibliography}

% Non-BibTeX users please use

\end{document}